\newcommand{\fink}{\mathfrak{k}}
\newcommand{\Hnew}{H_{\new}}
\newcommand{\x}{x_0}
\newcommand{\Sl}{\mc{S}}
\newcommand{\Ass}[1]{X_{#1}}
\newcommand{\Ring}[1]{{R_{#1}}}
\newcommand{\coker}{\on{coker}}
\newcommand{\Nil}{\mc{N}}
\newcommand{\BRS}[2]{H^{\semiinf+#1}_{f}(#2)}
\newcommand{\Weyl}[1]{\mathbf{V}_{#1}}
\newcommand{\Fneu}{\mathscr{F}^{{\affchi}}}
\newcommand{\affchi}{\wh\chi}
\newcommand{\wh}{\widehat}
\newcommand{\mc}{\mathcal}
\newcommand{\mf}{\mathfrak}
\newcommand{\mb}{\mathbb}
\newcommand{\on}{\operatorname}
\newcommand{\affP}{\widehat{P}}
\newcommand{\KL}{\mathsf{KL}}
\newcommand{\Vg}[1]{V^{#1}(\fing)}
\newcommand{\A}{\mathscr{A}}
\newcommand{\Prp}{Adm_+}
\newcommand{\finb}{\mathfrak{b}}
\newcommand{\finn}{\mathfrak{n}}
\newcommand{\isomap}{{\;\stackrel{_\sim}{\to}\;}}
\newcommand{\W}{\mathscr{W}}
\newcommand{\Cat}{\mathcal{C}}
\newcommand{\nc}{\newcommand}
\nc{\Hp}[1]{H^{#1}}
\newcommand{\V}{V}
\newcommand{\prin}{\mathrm{prin}}
\newcommand{\affh}{\widehat{\mathfrak{h}}}
\newcommand{\affg}{\widehat{\mathfrak{g}}}
\newcommand{\bigaffg}{\widetilde{\mathfrak{g}}}
\newcommand{\fing}{\mathfrak{g}}
\newcommand{\finh}{\mathfrak{h}}
\newcommand{\finm}{\mathfrak{m}}
\newcommand{\Wg}[1]{\W^{#1}(\fing, f)}
\newcommand{\bh}{\widehat{\mathfrak{h}}}
\newcommand{\Lamsemi}[1]{\bigwedge\nolimits^{\frac{\infty}{2}+#1}}
\newcommand{\Irr}[1]{\mathbf{L}_{#1}}
\newcommand{\BGG}{{\mathcal O}}
\newcommand{\N}{\mathbb{N}}
\newcommand{\Q}{\mathbb{Q}}
\newcommand{\1}{{\mathbf{1}}}
\newcommand{\teigi}{:=}
\newcommand{\dual}[1]{{#1}^*}
\newcommand{\bra}{{\langle}}
\newcommand{\ket}{{\rangle}}
\newcommand{\iso}{\overset{\sim}{\rightarrow}}
\newcommand{\roots}{\Delta}
\newcommand{\nno}{\nonumber}
\newcommand{\Lam}{\Lambda}
\newcommand{\lam}{\lambda}
\newcommand{\ra}{\rightarrow}
\newcommand{\+}{\mathop{\oplus}}
\newcommand{\Z}{\mathbb{Z}}
\newcommand{\Mod}{\text{-}\mathrm{Mod}}
\newcommand{\cprime}{$'$}
\newcommand{\inv}{^{-1}}
\renewcommand{\*}{{\otimes}}
\newcommand{\C}{\mathbb{C}}
\newcommand{\che}{^{\vee}}
\newcommand{\finp}{{\mathfrak{p}}}
\newcommand{\finr}{{\mathfrak{r}}}
\theoremstyle{plain}
\newtheorem{Th}{Theorem}[subsection]
\newtheorem{Pro}[Th]{Proposition}
\newtheorem{Lem}[Th]{Lemma}
\newtheorem{Co}[Th]{Corollary}
\theoremstyle{definition}
\theoremstyle{remark}
\newtheorem{Def}[Th]{Definition}
\newtheorem{Rem}[Th]{Remark}
\newcommand{\affW}{\widehat{W}}
\newcommand{\semiinf}{\frac{\infty}{2}}
\DeclareMathOperator{\im}{Im}
\DeclareMathOperator{\rank}{rk}
\DeclareMathOperator{\ch}{ch}
\DeclareMathOperator{\id}{id}
\DeclareMathOperator{\End}{End}
\DeclareMathOperator{\gr}{gr}
\DeclareMathOperator{\Hom}{Hom}
\DeclareMathOperator{\new}{new}
\DeclareMathOperator{\ad}{ad}
\DeclareMathOperator{\Ad}{Ad}
\DeclareMathOperator{\haru}{span}
\DeclareMathOperator{\Spec}{Spec}
\title[Associated varieties
and  $W$-algebras]
{Associated varieties of
modules over   Kac-Moody algebras 
and  $C_2$-cofiniteness of $W$-algebras}
\author{Tomoyuki Arakawa}
\address{
Research Institute for Mathematical Sciences, Kyoto University, Kyoto
606-8502 JAPAN}
\email{arakawa@kurims.kyoto-u.ac.jp}
\thanks{This work was partially  supported 
by the JSPS Grant-in-Aid  for Scientific Research (B)
No.\ 20340007}
\begin{document}
\maketitle

\begin{abstract}
First, we establish the relation 
between  the associated varieties of modules over Kac-Moody algebras
$\affg$
and those  over affine $W$-algebras.
Second, 
we prove the Feigin-Frenkel conjecture  on
 the singular supports
of $G$-integrable admissible representations.
In fact
we show that
the associated varieties 
of
  $G$-integrable
 admissible
 representations
are irreducible $\Ad G$-invariant  subvarieties
of the nullcone of $\fing$,
by
determining them
explicitly.
Third, 
we prove
the $C_2$-cofiniteness
of a large number of simple $W$-algebras,
including all  the minimal series principal $W$-algebras \cite{FKW92}
and
the 
exceptional $W$-algebras 
recently discovered by Kac-Wakimoto \cite{KacWak08}.
\end{abstract}

\section{Introduction}
This article addresses 
some basic  problems 
concerning the representation theory
of Kac-Moody algebras, that of (affine)
$W$-algebras
and the interrelation  between them.
It has three  aims.

Let $\fing$ be a complex simple Lie algebra,
$\affg$ the non-twisted affine Kac-Moody algebra
associated with $\fing$.
For a nilpotent element $f$ of $\fing$
and $k\in \C$,
both the    $W$-algebra  $\W^k(\fing,f)$
at level
$k$ 
and  representations
of $\Wg{k}$
are constructed 
by means of the 
 BRST cohomology 
functor $\BRS{0}{?}$ 
associated with the generalized Drinfeld-Sokolov reduction \cite{FF90,FKW92,KacRoaWak03}.

The first aim of this article   is to
establish the relation 
between
the {\em associated varieties} 
(\cite{Ara12}, see \eqref{eq:def-of-ass.v})
of 
modules over  $\affg$ and those over $\Wg{k}$.
More precisely,
we show that
\begin{align}
X_{\BRS{0}{M}}\cong X_M\cap \Sl
\label{eq:Main}
\end{align}
for
a finitely generated  graded  Harish-Chandra
($\affg,G[[t]]$)-module
$M$ of level $k$,
where
$X_M$
is the  associated variety of $M$
and $\Sl$  is the  Slodowy slice 
at $f$ to $\Ad G.f$
(Theorem
\ref{Th:BRST-reduction-of-varieties}).
From (\ref{eq:Main})
it follows 
that
the $\Wg{k}$-module $\BRS{0}{M}$ is {\em $C_2$-cofinite}\footnote{%
$C_2$-cofinite representations of vertex algebras
may be regarded as analogue of
finite-dimensional representations (\cite{Ara12})
.} 
\cite{Zhu96}
if 
$X_M$ equals 
 the closure 
of the orbit $\Ad G.f $.
This result  may  be viewed  as a chiralization of 
a theorem   \cite[Theorem 3.1]{Pre07}
of Premet
on finite $W$-algebras.

The second is 
to determine the associated varieties  
of {\em (Kac-Wakimoto) admissible representations}\footnote{Admissible
representations
of vertex operators algebras have nothing to do with
(Kac-Wakimoto) admissible representations of $\affg$.} \cite{KacWak89}
of $\affg$
at rational levels.
We 
prove  the    Feigin-Frenkel  conjecture
which states
that
 the singular supports of 
$G$-integrable admissible representations 
are contained in the jet scheme of the nullcone $\mc{N}$ of $\fing$,
or equivalently,
the associated varieties of 
$G$-integrable admissible representations 
are contained $\mc{N}$
(Theorem \ref{Th:Conj:Feigin-Frenkel}).
Furthermore we 
show that
 those associated varieties
 are $\Ad G$-invariant {\em irreducible} subvarieties   of $\Nil$,
that is,
closures of some nilpotent orbits in $\fing$
(Theorem
\ref{Th:main-admissible} and Theorem \ref{Th:Main-orbit1}).
These nilpotent orbits  depend only on  
the level $k$
of the representations.
Thus
for each  admissible 
rational number $k$
there exists a unique nilpotent orbit 
$\mb{O}[k]$ in $\fing$ such that
\begin{align*}
 X_{
\Irr{\lam}}=\overline{\mb{O}[k]}
\end{align*}
for any $G$-integrable irreducible admissible representation 
$\Irr{\lam}$ of level $k$.
(Actually the orbit $\mb{O}[k]$
depends only on the denominator of $k\in \Q$.)
The orbits $\mb{O}[k]$ are 
explicitly determined and listed in Tables
\ref{table:classical-principal}--\ref{table:E8}
(cf.\ \eqref{eq:O[k]}).

The third  
is to apply the above results 
to the $C_2$-cofiniteness problem 
of $W$-algebras. 
The $C_2$-cofiniteness condition \cite{Zhu96} is a certain finiteness condition
on a vertex operator algebra
which ensures the coherency of 
the associated conformal blocks
on any Riemann surface.
It also coincides with the
lisse condition  of Beilinson, Feigin and Mazur
\cite{BeiFeiMaz}
(see \cite{Ara12}).
 The minimal series 
$W$-algebras associated principal nilpotent elements
have been expected to be rational and $C_2$-cofinite
since its discovery \cite{FKW92}.
Furthermore,
a remarkable family of $W$-algebras,
called the {\em exceptional $W$-algebras},
which generalizes the minimal series 
principal 
$W$-algebras,
has been  recently discovered by Kac and Wakimoto \cite{KacWak08}. 
They conjectured\footnote{To be precise they conjectured that
the conformal filed theories associated with exceptional 
$W$-algebras are rational.} that exceptional $W$-algebras are  rational and
$C_2$-cofinite.
We prove the $C_2$-cofiniteness 
of a large number of $W$-algebras
including
 {\em all} the exceptional
$W$-algebras 
(Theorem \ref{Th:exceptionals-are-C2}).
More precisely,  we prove 
 that,
for each  admissible number $k$, 
the simple quotient\footnote{Conjecturally \cite{FKW92,KacRoaWak03},
$\W_k(\fing,f)=\BRS{0}{\Irr{k\Lam_0}}$.
We have proved this
in \cite{Ara05} for a minimal nilpotent element $f$
and in \cite{Ara07}
for a principal nilpotent
element $f$ under some regularity condition on $k$.
In type $A$ 
one can show this
for any nilpotent element
using the results of \cite{Ara08-a}
under some regularity condition on $k$.
The detail will appear elsewhere.
} 
 $\W_k(\fing,f)$ of $\Wg{k}$ with $f\in \mb{O}[k]$ is $C_2$-cofinite
(Theorem \ref{Th:C_2-cofiniteness-of-modules-over-W-algebras}),
and that each 
 exceptional $W$-algebra is isomorphic to
$\W_k(\fing,f)$ for some  admissible number $k$ and $f\in
\mb{O}[k]$.
We note that
there are also
  a considerable number of 
$C_2$-cofinite $W$-algebras 
which  are {\em not}
exceptional, see Tables \ref{table:classical-principal},
\ref{Oq-for-G_2},~\ref{Oq-for-F4},
\ref{Oq-for-E6},
\ref{Oq-for-E7}, \ref{table:E8}.
We conjecture that 
all the 
$C_2$-cofinite $W$-algebras 
appearing in this way
are rational.

\smallskip
Our strategy to prove (\ref{eq:Main})
is based on   Ginzburg's  reproof \cite{Gin08}
 of 
Premet's conjecture
\cite[Conjecture 3.2]{Pre07} (proved by Losev \cite{Los07})
on finite $W$-algebras.
A ``chiralization'' of the argument of \cite{Gin08}
proves the vanishing of the BRST cohomology of 
the associated graded spaces (Theorem \ref{Th:BRST-vanishing-for-assicaited-graded}).
The difficult part is the proof of the convergency
of the corresponding spectral sequence
because  our algebras are not 
Noetherian.
We overcome this problem by using the 
right exactness of the functor $\BRS{0}{?}$ (Theorem
      \ref{Th:right-exactness}, cf.\ \cite{AraMal}),
see \S \ref{subsection:strong-vanishing} for the detail.
As a result we obtain the strong vanishing assertion (Theorem
      \ref{Th:strong-vanishing}) of the BRST cohomology,
which gives 
 (\ref{eq:Main}) as desired.

Note that the vanishing of the BRST cohomology 
proves the exactness of the functor 
\begin{align*}
\BRS{0}{?}:\KL_k\ra \Wg{k}\Mod
\end{align*}
as well
(Theorem \ref{Th:vanishing-new}),
where
 $\KL_k$ is the 
category of graded Harish-Chandra ($\affg$, $G[[t]]$)-modules
of level $k$,
or equivalently,
 the
full subcategory of the category $\BGG$ of $\affg$
consisting  $G$-integrable representations
of level $k$.
This generalizes some of the exactness results
obtained  in \cite{Ara04,Ara05,Ara07,FreGai07}.

\smallskip
For a  finitely generated object $M$ of $\KL_k$,
the associated variety  $X_M$ is an $\Ad G$-invariant,  conic,
Poisson
  subvariety of 
$\fing^*$,
while the variety
 $X_{\BRS{0}{M}}$
 of the $\Wg{k}$-module $\BRS{0}{M}$
 is a $\C^*$-invariant,
Poisson subvariety of the
Slodowy slice $\Sl$.
Because $f$ is the unique fixed point of the $\C^*$-action of $\Sl$,
from (\ref{eq:Main}) it follows that
\begin{align}
 \BRS{0}{M}\ne 0\iff \overline{\Ad G.f}\subset X_M
\label{eq:how-to-determine-the-variety}
\end{align}
(compare \cite[Theorem 2]{Mat87},
\cite[Corollary 4.1.6]{Gin08}).

Now on the contrary to the cases of
associated varieties
of primitive ideals of $U(\fing)$,
the variety $X_M$ need not be contained in
the nullcone $\Nil$.
Nevertheless
it was conjectured by Feigin and Frenkel that
 this is the case for 
$G$-integrable {\em admissible} representation
of $\affg$.
We prove the conjecture 
of Feigin and Frenkel
by reducing to the $\mf{sl}_2$-cases
(which is known by Feigin and Malikov \cite{FeiMal97})
using the fact  \cite{A-BGG}  that
admissibility is preserved 
under the {\em semi-infinite restriction} 
(Theorem \ref{Th:Conj:Feigin-Frenkel}).

The character  
of an admissible representation is given by the Weyl-Kac type character
formula \cite{KacWak88},
and therefore
 the  character of $\BRS{0}{\Irr{\lam}}$  is computable 
\cite{FKW92,KacRoaWak03}
by  the vanishing of the BRST cohomology
and the Euler-Poincar\'{e} principle.
Hence from the Feigin-Frenkel conjecture
and (\ref{eq:how-to-determine-the-variety})
we see that the associated variety 
of an $G$-integrable admissible representation $\Irr{\lam}$
can be  determined by
knowing for which nilpotent $f$
the cohomology
$\BRS{0}{\Irr{\lam}}$ is nonzero.
As a result 
we have obtained the following description of 
$X_{\Irr{\lam}}$:
Let $k$ be an admissible rational number with denominator $u$.
Then 
for a $G$-integrable irreducible
admissible 
representation $\Irr{\lam}$
we have
\begin{align*}
X_{\Irr{\lam}}\cong
\begin{cases}
\{x\in \fing; (\ad x)^{2u}=0\}&\text{if
      }(r\che,u)=1,\\
\{x\in \fing; \pi_{\theta_s}(x)^{2u/r\che}=0\}&\text{if }(r\che,u)=r\che,
 \end{cases}
\end{align*}
where
$r\che$ is the lacing number of $\fing$,
$\theta_s$ is the highest short root of $\fing$
and $\pi_{\theta_s}$ is the irreducible finite-dimensional
representation of $\fing$
with highest weight $\theta_s$
(Theorem \ref{Th:main-admissible}).
The irreducibility
of the above variety
is known
in most cases \cite{Geo04}
and in
the other cases
it is checked by
 the classification of
nilpotent elements
and their closure relations (Theorem \ref{Th:Main-orbit1}).

An exceptional  $W$-algebra is  by definition
the  simple $W$-algebra
$\W_k(\fing,f)$
at a  principal admissible level\footnote{That is,
$\Irr{k\Lam_0}$ is  principal admissible.} $k$
such that
  $(u,f)$ is an {\em exceptional pair}
 \cite{KacWak08},
where $u$ is the denominator of $k$.
The exceptional pairs were 
 classified in \cite{KacWak08,ElaKacVin08}.
From the classification 
it follows  that 
$(u,f)$ is 
an exceptional pair if and only of
(i) $\overline{\Ad G.f}=X_{\Irr{k\Lam_0}}$
and
(ii)
$f$ is 
of principal type (Theorem \ref{Th:Kac-Wakimoto}).
This fact together the above explained results 
proves
the $C_2$-cofiniteness of the 
exceptional $W$-algebras.

\subsection*{Acknowledgments}
I would like to thank Fyodor Malikov for explaining 
me the results of \cite{FeiMal97}.
Results in this article were presented in part in
``Algebraic Lie Structures with Origins in Physics'',
Isaac Newton Institute for Mathematical Sciences, Cambridge, March
2009,
``Workshop and Summer School on Lie Theory and Representation Theory
II'',
East China Normal University,
July 2009,
``The 9th Workshop on Nilpotent Orbits and Representation Theory'',
Hokkaido, February 2010.
I would also like to thank 
the organizers of these conferences.

\subsection*{Notation}
Throughout this paper the ground field is the complex number 
$\C$
and
 tensor products and dimensions are always meant to be as vector spaces
over $\C$
 if not otherwise stated.
For basis notions on the theory of vertex algebras we refer the reader
to \cite{Kac98}
and \cite{FreBen04}.

\section{Associated graded vertex Poisson algebras
and their vertex Poisson modules}
In \S \ref{subsection:Function-on-jet-schemes} --
 \S \ref{subsection:affine vertex algebras}
we recall some basic facts on vertex algebras
 and also some results from \cite{Ara12}.
In \S \ref{subsection:Kac-Moody} we recall some fundamental facts on
Kac-Moody algebras.
\subsection{Functions on jet schemes
of affine Poisson varieties as vertex Poisson algebras}
\label{subsection:Function-on-jet-schemes}
For a scheme $X$ of finite type,
let
 $X_m$  the $m$-th jet scheme 
of $X$,
$X_{\infty}$
the infinite jet scheme
of $X$
(or the arc space of $X$).

Let us recall the definition of jet schemes.
For general theory of jet schemes see e.g., \cite{EinMus,Fre07}.
The scheme $X_m$ is  determined by its functor of points:
for every commutative  $\C$-algebra $A$,
there is  a bijection
\begin{align*}
 \Hom(\Spec A, X_m)\cong \Hom(\Spec A[t]/(t^{m+1}),X).
\end{align*}
If $m>n$, we have projections
$ X_m\ra X_n$.
This yields  a projective system
$\{X_m\}$ of schemes,
and the infinite jet scheme $X_{\infty}$
is the projective limit
$\lim\limits_{\underset{m}{\leftarrow}}
X_{m}$
in the category of schemes.

For an affine scheme $X=\Spec R$,
the jet scheme $X_m$ is explicitly described.
Choose a presentation  
$R=\C[x^1,\dots, x^r]/\bra f_1,\dots,f_s\ket$.
Define a new variables $x^j_{(-i)}$ for $i=1,\dots,m+1$,
and a derivation $T$ of the
 ring
\begin{align*}
 \C[x^j_{(-i)};i=1,2,\dots,m+1,\ 
 j=1,\dots,r
]
\end{align*}
 by setting
\begin{align*}
 T x^j_{(-i)}=\begin{cases}
	    i x^j_{(-i-1)}&\text{for $i\leq m$},\\
0&\text{for $i=m+1$}.
	   \end{cases}
\end{align*}

Let us  consider $f_i$ as an element of 
$\C[x^j_{(-i)}; j=1,\dots,r,i=0,1,\dots,m]$
by 
 identifying  $x^j$ with $x^j_{(-1)}$,
and 
set
\begin{align*}
 R_m:=\C[x^j_{(-i)}; i=1,\dots,m+1,\ j=1,\dots,r
]
/
\bra T^j f_i;
i=1,\dots, s,\
j=0,\dots,m+1\ket,
\end{align*}
where
$f_i$  is considered
as 
an element of 
$ \C[x^j_{(-i)};i=1,2,\dots,m+1,\ 
 j=1,\dots,r
]$
Then we have
\begin{align*}
X_m = \Spec R_m.
\end{align*}

Let $R_{\infty}$ denotes the differential algebra
obtained from $R_m$ by taking the limit $m\ra {\infty}$:
\begin{align}
 R_{\infty}=\C[x^j_{(-i)}; i=1,2,\dots,\
j=1,\dots,
]/
\bra T^j f_i;i=1,\dots, s,
j=0,\dots, \ket.
\label{eq:R-infty}
\end{align}
We have
\begin{align*}
X_{\infty}= \Spec R_{\infty}.
\end{align*}
Note that
$R_{\infty}$ is a {\em differential algebra} with derivation $T$.
Here a differential algebra is a unital commutative algebra
equipped with a derivation.
Obviously
$R_{\infty}$ is generated by $R$ as a differential algebra.

A differential algebra
$V$ is called a 
{\em vertex Poisson algebra}
\cite{FreBen04}
if it is equipped with
a linear map
\begin{align*}
 V\ra (\on{Der} V)[[z\inv]]z\inv,\quad a\mapsto Y_-(a,z)=\sum_{n\geq 0}a_{(n)}z^{-n-1,}
\end{align*}
such that
\begin{align}
&a_{(n)}b=0\quad n\gg 0,
\nonumber\\
&(Ta)_{(n)}=-n a_{(n-1)}\nonumber,\\
&a_{(n)}b=\sum_{j\geq 0}(-1)^{n+j+1}\frac{1}{j!}T^j(b_{(n+j)}a),
\label{eq:skew-symmetry} 
\\
&[a_{(m)},b_{(n)}]=\sum_{j\geq 0}\begin{pmatrix}
				  m\\ j
				 \end{pmatrix}(a_{(j)}b)_{(m+n-j)}
\label{eq:VPA-commutator-}
\end{align}
in $\End V$
for all $m,n\geq 0$,
$a,b\in V$,
where $\on{Der}V$ denotes the
space of derivation on $V$.

Note that the following formula follows from \eqref{eq:skew-symmetry}:
\begin{align*}
(ab)_{(n)}=\sum_{i=0}^{\infty}(a_{(-i-1)}b_{(n+i)}+b_{(-i-1)}a_{(n+i)})\quad\text{for
 }a,b\in V,\ n\geq 0,
\label{eq:associativity-VPA-V}
\end{align*}
where
we have set
\begin{align*}
a_{(-n)}=\frac{1}{(n-1)!}T^{(n-1)}(a)\quad\text{for 
$n\geq 1$, $a\in V$}.
\end{align*}

 \begin{Lem}[{\cite[Proposition 2.3.1]{Ara12}}]\label{Lem:level-0-VPA}
For 
 a Poisson algebra $R$,
there is a unique vertex Poisson algebra
structure on
$R_{\infty}=\C[(\Spec R)_{\infty}]$
such that
\begin{align}
a_{(n)}b=\begin{cases}
	  \{a,b\}&\text{if }n=0,\\
0&\text{if }n>0,
	 \end{cases}\quad\text{for }a,b\in R\subset R_{\infty}.
\end{align}
 \end{Lem}
The vertex Poisson algebra structure described in Lemma
 \ref{Lem:level-0-VPA}
 is called
the {\em level $0$ vertex Poisson algebra structure} of $R_{\infty}$.

Let $\fing$
be a complex simple Lie algebra as in Introduction,
 $\{x^i; i\in I\}$ a basis of $\fing$.
Then
$\C[\fing^*]\cong \C[x^i; i\in I]$,
and thus,
\begin{align*}
\C[\fing^*_{\infty}]\cong \C[x^i_{(-n)};i\in I, n\geq 1].
\end{align*}
Throughout this paper  we regard
$\C[\fing^*_{\infty}]$
as a vertex Poisson  algebra
with the level zero vertex Poisson algebra structure induced from
the Kirillov-Kostant Poisson algebra structure on $\C[\fing^*]$.

Let $G$ be  the adjoint group of $\fing$.
The $m$-the jet scheme $G_m$ of $G$ is an algebraic 
group, which is isomorphic to a semi-direct product of 
$G$ and a unipotent group.
The infinite jet scheme $G_{\infty}$
of $G$ is  a proalgebraic group $G[[t]]$,
which is isomorphic to a semi-direct product of $G$
and a prounipotent group.
We have  
\begin{align*}
\on{Lie}(G_m)=\fing_m=\fing[t]/(t^{m+1}),\quad
\on{Lie}(G_{\infty})=\fing_{\infty}=\fing[[t]].
\end{align*}
The group $G_{\infty}$ acts on $\fing^*_{\infty}$
by adjoint.

Note that
by \eqref{eq:VPA-commutator-}
\begin{align*}
[x_{(m)},y_{(n)}]=[x,y]_{(m+n)}
\quad\text{for }
 m,n\in \Z_{\geq 0},\ x\in \fing\subset \C[\fing^*]\subset \C[\fing^*_{\infty}].
\end{align*}
Hence
the assignment 
\begin{align}
xt^n\mapsto x^M_{(n)}\quad
 x\in \fing,\ 
n\geq 0,
\end{align}
defines a $\fing[[t]]$-module structure on $\C[\fing^*_{\infty}]$.
This $\fing[[t]]$-action coincides with
the one obtained by 
differentiating
the adjoint action of $G[[t]]$.

\subsection{Modules over vertex Poisson algebras}
Recall that a {\em Poisson module}
$M$ over a Poisson algebra 
$R$ is
a $R$-module  $M$ in the usual associative sense 
equipped with
a bilinear map
\begin{align*}
R\times M\ra M,\quad (r,m)\mapsto \ad r(m)=\{r,m\},
\end{align*}
which makes $M$ a Lie algebra module over $R$
satisfying 
\begin{align*}
 \{r_1,r_2 m\}=\{r_1,r_2\}m+r_2\{r_1,m\},\quad
\{r_1 r_2,m\}=r_1\{r_2,m\}+r_2\{r_1,m\}
\end{align*}
for $r_1,r_2\in R$, $m\in M$.

For instance
a Poisson module over $\C[\fing^*]$
is the same as
a $\C[\fing^*]$-module $M$ as a ring
equipped with
an action of $\fing$ such that
\begin{align*}
x \cdot fm=\{x,f\}m+f(x\cdot m)\quad\text{ for }x\in \fing,\ f\in
 \C[\fing^*],\ m\in M.
\end{align*}

 \begin{Def}
A {\em vertex Poisson module} over 
a vertex Poisson algebra $V$ is a 
$V$-module $M$ in the usual associative sense 
 equipped with 
 a linear map
\begin{align*}
 V\mapsto (\End M)[[z\inv]]z\inv,\quad a\mapsto Y_-^M(a,z)=\sum_{n\geq 0}a^M_{(n)}z^{-n-1},
\end{align*}
satisfying
\begin{align}
& a_{(n)}^Mm=0\quad\text{for }n\gg 0, 
\\
&(T a)_{(n)}^M=-na^M_{(n-1)},\\
 &a_{(n)}^M(b v)=(a_{(n)}^Mb)  v+b (a_{(n)}^Mv),\\
&[a^M_{(m)},b^M_{(n)}]=\sum_{i\geq 0}\begin{pmatrix}
				  m\\ i
				 \end{pmatrix}(a_{(i)}b)^M_{(m+n-i)},\\
&(ab)^M_{(n)}=\sum_{i=0}^{\infty}(a_{(-i-1)}b_{(n+i)}^M+b_{(-i-1)}a_{(n+i)}^M)
\label{eq:associativity-VPA}
\end{align}
for all $a, b\in V$, $m, n\geq 0$, $v\in M$.

\end{Def}

A  vertex Poisson algebra $R$ is naturally a vertex Poisson module over
itself.

Let  $M$ be a vertex Poisson module over 
 $\C[\fing^*_{\infty}]$.
Then
the assignment 
\begin{align*}
xt^n\mapsto x^M_{(n)}\quad x\in \fing\subset \C[\fing^*]\subset \C[\fing^*_{\infty}],\
n\geq 0,
\end{align*}
defines a $\fing[t]$-module structure on $M$,
and in fact, a vertex Poisson module over $\C[\fing^*_{\infty}]$
is the same as a 
$\C[\fing^*_{\infty}]$-module $M$
in the usual associative sense
equipped with an action of the Lie algebra $\fing[t]$
such that
$(xt^n) m=0$ for $n\gg 0$,
$x\in \fing$,
$m\in M$,
and 
\begin{align*}
(xt^n)\cdot (a m)=(x_{(n)}a) m+a (x t^n)\cdot m
\end{align*}
for $x\in \fing$, $n\geq 0$,
$a\in \C[\fing^*_{\infty}]$,
$m\in M$.

Below we often write $a_{(n)}$ for $a^{M}_{(n)}$.

The proof of 
the following assertions are straightforward.
 \begin{Lem}\label{Lem:VPA-induced}
Let $R$ be a Poisson algebra,
$E$ a Poisson module over $R$.
Then
there is a unique
 vertex Poisson $R_{\infty}$-module structure 
on $
R_{\infty}\*_R E
$
such that
\begin{align*}
 a_{(n)}(b\* m)=(a_{(n)}b)\*
 m+\delta_{n,0}b\* \{a,m\}
\end{align*}
for
$n\geq 0$, 
$a\in R\subset R_{\infty}$, $b\in R_{\infty}$.
$m\in E$.
 \end{Lem}

 \begin{Lem}\label{Lem:universality-of-induced-VPA-module}
Let $R$ be an Poisson algebra,
$M$  a vertex Poisson module over $R_{\infty}$.
Suppose that
there exists a
 $R$-submodule $E$ of $M$
(in the usual commutative sense)
such that
$a_{(n)}E=0$  for $n>0$, $a\in R$,
and $M$ is generated by $E$
(in the usual commutative sense).
Then there exists a surjective homomorphism
\begin{align*}
 R_{\infty}\*_{R}E\twoheadrightarrow M
\end{align*}
of vertex Poisson modules.
 \end{Lem}

\subsection{$C_2$-cofiniteness and associated 
varieties of  vertex algebras}
\label{subsection:singulra-supports-and-assocaited-varites}
Recall that a  {\em vertex algebra}
is a vector space  $V$
equipped with
an element
$\1\in V$
called the {\em vacuum},
$T\in \End(V)$ called the {\em translation operator},
and 
 a linear map
 \begin{align*}
Y(?,z):V\ra (\End V)[[z,z\inv]], 
\quad a\mapsto Y(a,z)=a(z)=\sum_{n\in \Z}
a_{(n)}z^{-n-1}
 \end{align*}
called the {\em state-field correspondence}, 
such that
\begin{align*}
 &\1(z)=\id_V,\\
&\text{$a_{(n)}b=0$ for $n\gg 0$,
}\\
&\text{$a_{(n)}\1=0$ for $n\geq 0$
and $a_{(-1)}\1=a$},\\
&\text{$(Ta)(z)=[T, a(z)]=\frac{d}{dz}a(z)$},\\
&\text{$(z-w)^n [a(z),b(w)]=0$ in $\End (V)$
for $n\gg0$,}
\end{align*}
for all $a,b\in V$.

As a consequence of the definition 
we have
the {\em Borcherds identity} which may \cite{MatNag99}
be described as
\begin{align*}
 &[a_{(m)},b_{(n)}]=\sum_{i\geq 0}
\begin{pmatrix}
 m\\ i
\end{pmatrix}(a_{(i)}b)_{(m+n-i)},\\
& (a_{(m)}b)_{(n)}=\sum_{i\geq 0}(-1)^m
\begin{pmatrix}
 m\\i
\end{pmatrix}
(a_{(m-i)}b_{(n+i)}-(-1)^mb_{(m+n-i)}a_{(i)})
\end{align*}
in $\End V$
for all $a,b\in V$,
$m,n\in\Z$.

Let $F^{\bullet}V$ be the
Li filtration\footnote{The Li filtration
is defined  independent of the grading of $V$.
Hence it unifies the notion of the standard filtration
and the Kazhdan filtration in our case (compare \cite{Kos78,Lyn79,GanGin02}).
}
of 
a vertex algebra $V$ (\cite{Li05}).
By definition,
$F^0 V=V$,
\begin{align*}
F^p V=\haru_{\C}\{a_{(-i-1)}b; a\in V, b\in F^{p-i}V, i\geq 1\}.
\end{align*}
We have
\begin{align*}
& V=F^0V\supset F^1 V\supset F^2V\supset\dots,
\\
&
 a_{(n)}F^q V\subset F^{p+q-n-1}V\quad \text{for }
a\in F^p V,\  n\in \Z,
\nno \\
&
 a_{(n)}F^qV\subset  F^{p+q-n}V\quad \text{for }
 a\in
F^p V,
\ n\geq 0,\\
&T F^p V\subset F^{p+1}V.
\end{align*}
Also we have
$\bigcap F^p V=0$
if $V$ is positively graded (see below).

The associated graded space
\begin{align*}
\gr^F V=\bigoplus_{p\geq 0} F^p V/F^{p+1}V
\end{align*}
is a differential algebra
by
\begin{align*}
 \sigma_p(a)\sigma_q(b)=\sigma_{p+q}(a_{(-1)}b),\quad
T\sigma_p(a)=\sigma_{p+1}(Ta),
\end{align*}
where
$\sigma_p: F^pV\ra F^p V/F^{p+1}V$ is the projection
and $T$ is the translation operator of $V$.
Its vertex Poisson algebra
structure is given by
\begin{align*}
 \sigma_p(a)_{(n)}\sigma_q(b)=\sigma_{p+q-n}(a_{(n)}b),\quad n\geq 0,
\end{align*}
where we understand $\sigma_{n}(a)=0$ for $n<0$.

The subspace
$F^1 V$
 is the linear span of the vectors $a_{(-2)}b$
with $a,b\in \V$,
which is usually denoted by 
$C_2(V)$ in the vertex operator algebra
theory.
Set
\begin{align*}
    \Ring{V}\teigi V/F^1V(=V/C_2(V)).
   \end{align*}
Then 
$R_V$ is a subring
of $\gr V$ in the usual associative sense,
and moreover,
it has the structure of  a Poisson algebra,
where the Poisson bracket is given by
\begin{align*}
 \{a,b\}=a_{(0)}b\quad\text{for }a,b\in R_V\subset \gr^F V.
\end{align*}
The $R_V$ is called
{\em Zhu's $C_2$-algebra}
of $V$ (\cite{Zhu96}).

The vertex Poisson algebra
$\gr^F V$
is generated by $R_V$ as a differential algebra.
In fact
the  embedding $\Ring{V}\hookrightarrow \gr^F V$
induces 
the
surjective homomorphism 
\begin{align}
(R_V)_{\infty}\twoheadrightarrow \gr^F V
\label{eq:VPA-surj}
\end{align}
of vertex Poisson algebras (\cite{Li05,Ara12}).

Let $\{ a^i; i\in I\}$ be elements 
of $V$ such that their images generate  $R_V$
in usual commutative sense.
The surjectivity of \eqref{eq:VPA-surj}
implies that
\begin{align*}
&F^p V\\
&=\haru_{\C}\{a_{(-n_1-1)}^{i_1}
\dots a_{(-n_r-1)}^{i_r}\1;n_i\geq 0,
\ n_1+\dots +n_r\geq p,\ i_1,\dots,i_r\in I\}.
\end{align*}

In the rest of paper 
we assume that 
$V$ is {\em finitely strongly generated},
or equivalently,
 $R_V$ 
is
finitely  generated as a ring,
unless otherwise stated.

Define
\begin{align*}
 X_V=\Spec R_V.
\end{align*}
The $X_V$
is called the
{\em associated variety} of $V$.

A vertex algebra 
$V$ is called
{\em $C_2$-cofinite}
if $R_V$ is finite-dimensional.
Clearly,
we have
\begin{align*}
\text{$V$ is $C_2$-cofinite}
\iff   \dim X_V=0.
\end{align*}

A $C_2$-cofinite vertex algebra
is an analogue of finite-dimensional algebra.
To see this,
consider the {\em singular support}
\begin{align*}
 SS(V):=\Spec (\gr^F V)\subset (X_V)_{\infty}
\end{align*}
of the vertex algebra $V$.
We have \cite{Ara12}
\begin{align*}
 \dim SS(V)=0\iff \dim X_V=0\ (\iff
\text{$V$ is $C_2$-cofinite}).
\end{align*}
This
follows from the fact that
\begin{align*}
X_V=\pi_{\infty,0}(SS(V)),
\end{align*}
where 
$\pi_{\infty,0}: (X_V)_{\infty}\ra X_V$
is the natural projection,
and the general fact 
$X_{\infty}$
is homeomorphic to
$(X_{\on{red}})_{\infty}$,
where 
$X_{\on{red}}$ denotes the reduced scheme of $X$.

\smallskip

A vertex algebra $V$ is called  {\em graded}
if  there exists a semisimple operator
$H\in \End V$,
called a {\em Hamiltonian},
such that
\begin{align}
 [H,a_{(n)}]=-(n+1)a_{(n)}+(Ha)_{(n)}.
\label{eq:Hamiltonian}
\end{align}
For an eigenvector $a$ of $H$,
it eigenvalue is called the {\em conformal weight} of $a$
and denoted by $\Delta_a$.
The vertex algebra $V$
is called  {\em positively graded} if 
\begin{align*}
V=\bigoplus_{\Delta\in \Q_{\geq 0}}V_{\Delta},
\quad V_0=\C \mathbf{1},
\end{align*}
where
$V_{\Delta}=\{a\in V; Ha=\Delta a\}$.
If this is the case 
$X_V$ is conic, and therefore,
\begin{align*}
 V\text{ is $C_2$-cofinite }&\iff X_V=\{pt\}.
\end{align*}

\subsection{Associated varieties of modules over vertex algebras}
\label{subsection:Associated varieties of modules over vertex algebras}
Let 
$V$ be a finitely strongly generated,
graded vertex algebra.
A {\em module}  over  a vertex algebra
$V$  is a vector space  $M$
equipped with a linear map
 \begin{align*}
Y^M(?,z):V \ra (\End M)[[z,z\inv]], 
\quad a\mapsto a(z)=\sum_{n\in \Z}
a_{(n)}^M z^{-n-1},
 \end{align*} 
such that
\begin{align*}
&Y^M(\1,z)=\id_M,\\
&\text{$a_{(n)}^M m=0$ for $n\gg 0$,
$a\in V$, $m\in M$}
\\
 &[a_{(m)}^M,b_{(n)}^M]=\sum_{i\geq 0}
\begin{pmatrix}
 m\\ i
\end{pmatrix}(a_{(i)}b)^M_{(m+n-i)},\\
& (a_{(m)}b)^M_{(n)}=\sum_{i\geq 0}(-1)^m
\begin{pmatrix}
 m\\i
\end{pmatrix}
(a_{(m-i)}^Mb_{(n+i)}^M-(-1)^mb_{(m+n-i)}^Ma_{(i)}^M)
\end{align*}

In particular $V$ itself if a module over $V$
called the {\em adjoint module}.

Below we often write $a_{(n)}$ for $a^M_{(n)}$.

A $V$-module $M$ is called {\em graded}
if there is  a semisimple action
of the Hamiltonian $H$
of $V$
on $M$
satisfying
\eqref{eq:Hamiltonian} in $\End V$.
For a
graded module $M$
set 
\begin{align*}
M_d=\{m\in M;
Hm =d m\}
\end{align*}A {\em positively graded} $V$-module
is a graded $V$-module $M=\bigoplus_{d\in \C}M_d$
such that 
$M_d=0$ unless $d\in \bigcup_{i=1}^r (d_i +\Q_{\geq 0})$
for some $d_1,\dots,d_r\subset \C$.

Let $\V\Mod$ denote
the abelian category of graded $V$-modules.

A {\em compatible filtration} 
of a $V$-module $M$ is a decreasing filtration
\begin{align*}
M=\Gamma^0M\supset \Gamma^1M\supset \cdots
\end{align*}
such that
\begin{align*}
&
 a_{(n)}\Gamma^q M\subset \Gamma^{p+q-n-1}M\quad \text{for }
a\in F^p V,\ \forall n\in \Z,
\nno \\
&
 a_{(n)}\Gamma^qM\subset  \Gamma^{p+q-n}M\quad \text{for }
 a\in
F^p V,
\ n\geq 0,
\\
& H. \Gamma^p M\subset \Gamma^p M\quad \text{for all }p\geq 0,\\
&\bigcap_p \Gamma^p M=0.
\end{align*}
For a compatible filtration
$\Gamma^{\bullet}M$
the associated graded space
\begin{align*}
\gr^\Gamma M=\bigoplus_{p\geq 0} \Gamma^p M/\Gamma^{p+1}M
\end{align*}
is
naturally a graded vertex Poison module over the graded vertex Poisson algebra
$\gr^F V$,
and hence,
it is a 
graded vertex Poison module over $(R_V)_{\infty}$
by \eqref{eq:VPA-surj}.

The vertex Poisson
$(R_V)_{\infty}$-module
structure 
of $\gr^\Gamma M$
restricts to 
the Poisson $R_V$-modules structure
of $M/\Gamma^1 M=\Gamma^0M /\Gamma^1M$,
and
$a_{(n)}(M/\Gamma^1 M)=0$
 for $a\in R_V\subset (R_V)_{\infty}$,
$n>0$.
It follows that
there is a  homomorphism
\begin{align*}
 (R_V)_{\infty}\*_{R_V}(M/\Gamma^1 M)\ra \gr^{\Gamma}M,
\quad a\* \bar m\mapsto a\bar m,
\end{align*}
of vertex Poisson modules
by Lemma \ref{Lem:universality-of-induced-VPA-module}.

Suppose that
$V$ is positively graded and so
is the $V$-module $M$.
We denote by $F^\bullet M$  
the Li filtration \cite{Li05}  
of $M$,
which 
is 
 defined 
inductively by
$F^0 M=M$,
\begin{align*}
F^p M=\haru_{\C}\{a_{(-i-1)}b; a\in V, b\in F^{p-i}M, i\geq 1\}.
\end{align*}
It is a compatible filtration of $M$,
and in fact,
 it 
 the finest compatible filtration of $M$,
that is,
$F^p M\subset \Gamma^p M$ for all $p$
for any compatible filtration 
$\Gamma^\bullet M$ of $M$.
The subspace $F^1 M$ is spanned by the
vectors $a_{(-2)}m$ with $a\in V$, $m\in M$,
which is usually denote by
$C_2(M)$ in the vertex operator algebra theory.
Set
\begin{align}
 \bar M=M/F^1 M(=M/C_2(M)),
\end{align}
which 
 is a 
Poisson module over $R_V=\bar V$.
By \cite[Proposition 4.12]{Li05}
the vertex Poisson module homomorphism
\begin{align*}
 (R_V)_{\infty}\*_{R_V}\bar M\ra \gr^F M
\end{align*}
is surjective.

Let $\{ a^i; i\in I\}$ be elements 
of $V$ such that their images generates  $R_V$
in usual commutative sense,
and let $U$ be a subspace of
$M$ such that $M=U+F^1 M$.
The surjectivity of the above map is equivalent to that
\begin{align}
 &F^p M\label{eq:span-FpM}
\\
=\haru_{\C}&\{a_{(-n_1-1)}^{i_1}
\dots a_{(-n_r-1)}^{i_r}m;m\in U,\ n_i\geq 0,
n_1+\dots +n_r\geq p,\
 i_1,\dots,i_r\in I\}.
\nonumber
\end{align}

A $V$-module $M$ is called 
 {\em finitely strongly generated} 
if 
 $\bar M$ is finitely generated over $R_V$
in the usual associative sense.
It is called 
{\em $C_2$-cofinite} if $\bar M$ 
is finite-dimensional.
The {\em associated variety}
of $M$
is by definition
\begin{align}
 X_M=\on{supp}_{\Ring{V}}(\bar M)
\label{eq:def-of-ass.v}
\end{align}
which is a Poisson subscheme of $X_V$.
Clearly,
\begin{align}
\text{ $M$ is $C_2$-cofinite }\iff
\dim X_M=0
\label{eq:C_2-cofintie-criteri}
\end{align}
for a finitely strongly generated $V$-module $M$.

{\em Throughout the paper 
$\{F^p M\}$ denotes the Li filtration 
and a general compatible filtration will be denoted by $\{\Gamma^p
M\}$.}

\subsection{Affine vertex algebras}
\label{subsection:affine vertex algebras}
Let $\fing$ be a finite-dimensional simple Lie algebra
as above,
and let
 $\bigaffg$ be the non-twisted affine Kac-Moody algebra
associated with  $\fing$:
\begin{align*}
 \bigaffg=\fing[t,t\inv]\+\C K\+ \C D,
\end{align*} 
whose commutation relations  are given by
\begin{align*}
 &[x_{(m)},y_{(n)}]=[x,y]_{(m+n)}+m(x|y)\delta_{m+n,0}
K,\\
&[D, x_{(m)}]=mx_{(m)},\quad
[K,\bigaffg]=0
\end{align*}
with $x,y\in \fing$ and $m,n\in \Z$,
where
 $x_{(m)}=x\* t^m$ and $(~|~)$ is a normalized invariant bilinear
form on $\fing$.
We identify $\fing$ with the subalgebra
$\fing\* \C\subset \bigaffg$.
Set
\begin{align*}
 \affg\teigi [\bigaffg,\bigaffg]=\fing[t,t\inv]\+ \C K.
\end{align*}

Define
\begin{align*}
 \Vg{k}=U(\bigaffg)\*_{U(\fing[t]\+ \C K\+ \C D)} \C_k,
\end{align*}
where
 $\C_k$ is the one-dimensional
representation of 
$\fing[t]\+ \C K\+ \C D\subset \bigaffg$ on which 
$\fing[t]\+ \C D$ acts trivially and $K$ acts as a  multiplication by $k$.
There is a unique vertex algebra
structure on $\Vg{k}$ such that
$\1=1\* 1$
 is the vacuum and
\begin{align*}
 Y(xt\inv\1,z)=x(z):=\sum_{n\in \Z}xt^n z^{-n-1}\quad\text{for }x\in \fing.
\end{align*}
The vertex algebra
$\Vg{k}$ is called
the  {\em universal affine vertex algebra}
associated with $\fing$ at level $k$.
The standard grading of $\Vg{k}$ is given  by  the
Hamiltonian
\begin{align*}
 H=-D.
\end{align*}

A $\Vg{k}$-module
is the same as
a  $\affg$-modules $M$
of level $k$ such that $x(z)$ is a field on $M$ for any $x\in \fing$,
that is,
$x_{(n)}m=0$ with  $n\gg 0$  for any $m\in M$.
Such a representation is called a {\em smooth} module of $\affg$
of level $k$.

Because the action of $H$
on $\Vg{k}$
stabilizes the Li filtration,
the vertex Poisson algebra $\C[\fing^*_{\infty}]$ 
is graded by the Hamiltonian $H$.
If $M$ is a graded $\Vg{k}$-module
and $\{\Gamma^p M\}$ is a compatible filtration
then 
$\gr^\Gamma M$ is a graded vertex Poisson
$\C[\fing^*_{\infty}]$-module.

By \eqref{eq:span-FpM} we have
\begin{align}
 F^p M=F^p U(\fing[t\inv]t\inv) M
\end{align}
for a $\Vg{k}$-module $M$,
where
\begin{align}
F^{p}&U(\fing[t\inv]t\inv)\\
&
=\haru_{\C}\{(x_1)_{(-n_1-1)}
\dots (x_r)_{(-n_r-1)};x_i\in \fing,\ n_i\geq 0,
n_1+\dots +n_r\geq  p
\}.
\nonumber
\end{align}
In particular,
\begin{align*}
F^1 M(=C_2(M))=\fing[t^{-1}]t^{-2} M.
\end{align*}
Since
we have
$\Vg{k}\cong U(\fing[t\inv]t\inv)$ 
as  vectors spaces
by the PBW theorem,
 we have the 
isomorphism of 
Poisson algebras 
\begin{align*}
 \C[\fing^*]=S(\fing)\isomap  R_{\Vg{k}}=\Vg{k}/F^1 \Vg{k},
\quad x\mapsto \overline{xt\inv \1},\quad x\in \fing.
\end{align*}
This induces the isomorphism of  vertex Poisson algebras
\begin{align*}
 \C[\fing^*_{\infty}]\isomap  \gr^F \Vg{k}.
\end{align*}
Thus,
\begin{align*}
 X_{\Vg{k}}=\fing^*,\quad 
SS(\Vg{k})=\fing^*_{\infty}.
\end{align*}

We denote by  $V_k(\fing)$ be the unique simple graded quotient of
$\Vg{k}$.
The surjection
$\Vg{k}\ra V_k(\fing)$
induces
the surjective homomorphism 
\begin{align*}
 \C[\fing^*]= R_{\Vg{k}} 
\twoheadrightarrow 
 R_{V_k(\fing)}=V_k(\fing)/F^1\Vg{k}
=V_k(\fing)/\fing[t\inv]t^{-2}V_k(\fing)
\end{align*}
of Poisson algebras.
Therefore
$X_{\Vg{k}}$ is a conic,
$G$-invariant,
 closed Poisson subscheme of 
$\fing^*$.

\subsection{Kac-Moody algebras and the category 
$\KL_k$}\label{subsection:Kac-Moody}
Let $\finb$ be a Borel subalgebra
of $\fing$,
$\finh\subset \finb$
the Cartan subalgebra,
$\Delta_+$ the corresponding set of positive roots,
$\Delta=\Delta_+\sqcup -\Delta_+$.
Let
$\fing_{\alpha}$ the root space of root $\alpha\in \Delta$,
$\theta$ the highest root,
$\theta_s$ the highest short root,
$\rho=\sum_{\alpha\in \Delta_+}\alpha/2$, 
$\rho\che=\sum_{\alpha\in \Delta_+}\alpha\che/2$,
where
$\alpha\che=2\alpha/(\alpha|\alpha)$.
Let $\Pi=\{\alpha_1,\dots,\alpha_{l}\}\subset \Delta_+$
be the set of simple roots of $\Delta$,
where $l$ is the rank of $\fing$.

Let
$P^+=\{\bar \lam\in \finh^*;
\bar \lam(\alpha\che)\in \Z_{\geq 0}\
\text{for }\alpha\in \roots_+\}$,
the set of the integral dominant weights of $\fing$.
Denote by $E_{\lam}$  the irreducible finite-dimensional
representation of $\fing$ with highest weight $\lam\in P^+$.

Let 
$
\affh=\finh\+ \C K \+ \C D
$ be 
the  Cartan subalgebra of $\bigaffg$,
$\dual{\affh}=\dual{\finh}\+ \C \delta\+ \C\Lam_0$
the dual of $\affh$.
Here
$\delta(D)=\Lam_0(K)=1$,
$\delta(\finh)=\Lam_0(\finh)=\delta(K)=\Lam_0(D)=0$.
For a 
$\affh$-module $M$
and $\lam\in \dual{\affh}$,
let
\begin{align*}
M^{\lam}=\{m\in M;
hm=\lam(h)m\text{ for }h\in \affh\}.
\end{align*}

Set
$\affg_+=[\finb,\finb]+ \fing[t]t\subset \bigaffg$,
$\affg_-=[\finb_-,\finb_-]+ \fing[t\inv]t\inv\subset \bigaffg$,
where
$\finb_-$ be  the opposite Borel subalgebra
of $\fing$.
Then
\begin{align*}
\bigaffg=\affg_- \+ \affh\+ \affg_+
\end{align*}
gives the standard  triangular decomposition of $\bigaffg$.
Denote by $\wh{\roots}_+$ 
the
corresponding set of positive roots of $\bigaffg$,
by $\wh{\roots}_+^{\on{re}}$ the 
set of  positive real roots of $\bigaffg$,
$\wh{Q}_+=\sum_{\alpha\in \wh{\roots}_+}\Z_{\geq 0}\alpha
\subset \dual{\affh}$.
Let $\affW$ be the subgroup of $GL(\affh^*)$
generated by 
the reflection
$s_{\alpha}$ with $\alpha\in \wh{\roots}_+^{\on{re}}$,
where
$s_{\alpha}(\lam)=\lam-\lam(\alpha\che)\alpha$.
The dot action of $\affW$ on $\dual{\affh}$
is given by $w\circ \lam=w(\lam+\wh \rho)-\wh \rho$,
where $\wh \rho=\rho+h\che \Lam_0$ and $h\che$ is the 
dual Coxeter number of $\affg$.

For $\lam\in \dual{\affh}$,
let $\Irr{\lam}$ be the 
irreducible highest weight representation of $\bigaffg$
with highest weight $\lam$.
Note that 
\begin{align*}
 V_k(\fing)\cong \Irr{k\Lam_0}
\end{align*}as $\affg$-modules.

For $k\in \C$,
let $\KL_k$ be as the
 full subcategory of the category of 
$\bigaffg$-modules consisting of objects $M$ 
satisfying
\begin{itemize}
\item $M$ is of level  $k$,
that is,  $K$ acts on $M$
as the multiplication by  $k$,
\item $M=\bigoplus_{d\in \C}M_d$
and  $\dim M_d<\infty$ for all $d\in \C$,
where \begin{align*}
M_d=\{m\in M; D m=-d m\},
      \end{align*}
\item 
there exists a finite subset   $\{d_1,\dots, d_r\}$ 
of $\C$
such that
$M_d=0$ unless $d\in \bigcup_{i=1}^r (d_i+\Z_{\geq 0})$.
\end{itemize}
From the definition it follows that  the action of 
$\fing[t]\subset \bigaffg$
on $M\in \KL_k$ integrates to the action of $G_{\infty}=G[[t]]$.

Since an object of $\KL_k$ is obviously smooth,
 $\KL_k$ may be thought as a full subcategory 
of $\Vg{k}\Mod$.

The irreducible representation
 $\Irr{\lam}$ is an object of 
$\KL_k$
if and only if
$\lam\in \affP^+_k$,
where
\begin{align}
 \affP_{k}^+\teigi \{\lam\in \dual{\bh}_k;
\bar \lam\in P^+
\}.
\label{eq:p+}
\end{align}
Here $\affh_k^*\teigi \{\lam\in \dual{\affh};
\lam(K)=k\}$
and  $\dual{\affh}\ra \dual{\finh}$,
$\lam\mapsto \bar \lam$,
is the restriction map.

For $\lam\in \affP^+_k$,
let
$\Weyl{\lam}\in \KL_k$ be the {\em Weyl module} with highest weight $\lam$:
\begin{align*}
 \Weyl{\lam}=U(\bigaffg)\*_{U(\fing[t]\+ \C K\+ \C D)}E_{\lam},
\end{align*}
where $E_{\lam}$ is the $\fing$-module
$E_{\bar \lam}$ considered as a $\fing[t]\+ \C K \+ \C D$-module
on which $\fing[t]t$ acts trivially,
$K=k \id_{E_{\lam}}$
and $D=\lam(D)\id_{E_{\lam}}$.

Note that
$M\in \KL_k$ is finitely generated as a $\bigaffg$-module
if and only if
it is finitely strongly generated as a $\Vg{k}$-module.

Let $\KL_k^{\Delta}$
be the full subcategory of $\KL_k$ consisting
of objects $M$ that admits a {\em Weyl flag},
that is a finite filtration
$M=M_0\supset M_1\supset \dots \supset M_r=0$
such that 
each successive quotient $M_i/M_{i+1}$ is isomorphic to
$\Weyl{\lam_i}$ for some $\lam_i$.
Any object of $\KL_k^{\Delta}$ is finitely generated and
 any finitely generated object of $\KL_k$ is a quotient of some
object
of $\KL_k^{\Delta}$.

Since $M\in \KL_k^{\Delta}$ is free over
$U(\fing[t\inv]t\inv)$,
we have the following assertion.
\begin{Lem}\label{Lem:surjection-as-gr-delta}
For  an object $M$ of $\KL_k^{\Delta}$,
the surjective homomorphism
\begin{align*}
\C[\fing^*_{\infty}]\*_{\C[\fing^*]}\bar M
\ra\gr^F M
\end{align*}
is an isomorphism
of vertex Poisson modules over $\C[\fing^*_{\infty}]$.
\end{Lem}

\section{Jet schemes of Slodowy slices and their vertex Poisson modules}
In this section we collect some fundamental facts about jet schemes
of Slodowy slices and prove two (co)homology vanishing 
assertions Proposition \ref{Pro:vanishing-of-homology}
and Proposition
\ref{Pro:cohomology-vanishing-gr}.
\subsection{The category $\Cat$}
\label{subsection:category-C}
Let $\Cat$ be the full subcategory of 
the category of 
 vertex Poisson modules over 
$\C[\fing^*_{\infty}]$
consisting of modules $M$
such that  
$\fing[t]t\subset \fing[t]$ acts locally nilpotently on $M$
and
 $M$ is a direct sum of finite-dimensional representations
as a module over $\fing\subset \fing[t]$.

If $M\in \KL_k$
and
$\Gamma^\bullet M$ is
a compatible filtration,
then $\gr^{\Gamma}M$ is an object of $\Cat$.

For
a finite-dimensional
representation $E$ of $\fing$,
define
\begin{align*}
\Delta(E)
=\C[\fing^*_{\infty}]\*_{\C[\fing^*]}(\C[\fing^*]\*_{\C} E)\in \Cat,
\end{align*}
where
$\C[\fing^*]\*_{\C}  E$ is considered as a Poisson $\C[\fing^*]$-module 
by the action
\begin{align*}
& r (r'\* m)=(r r')\* m, \quad r,r'\in \C[\fing^*], \ m\in M,\\
&\{x,r\* m\}=\{x,r\}\* m+r\* xm,\quad x\in \fing,\ r\in \C[\fing^*], \
 m\in M.
\end{align*}
Note that 
\begin{align*}
\gr^F \Weyl{\lam}\cong 
\Delta(E_{\bar \lam}).
\end{align*}
 
\begin{Lem}\label{Lem:highest-weight-filtration}
 Let $M$ be an object of $\Cat$.
Then there exists a filtration
$0=M_0\subset M_1\subset \dots $
such that (1) $M=\bigcup_i M_i$,
(2) each $M_i/M_{i-1}$ is a quotient of $\Delta(E_i)$
for some finite dimensional simple $\fing$-module $E_i$,
and (3) $d_i\not < d_{j}$ for $i<j$.
Moreover if $M$ is finitely generated then there exists a finite
filtration 
$0=M_0\subset M_1\subset \dots \subset M_r=M$ with this property.
 \end{Lem}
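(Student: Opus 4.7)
The plan is to construct the filtration from the bottom up, peeling off one simple $\fing$-summand at a time from the minimum degree of the current quotient. The key observation is that if $d$ is a minimal element of the support $\{e \in \C : M_e \neq 0\}$ of $M$ (such minimal elements exist along each of the finitely many $\Z_{\geq 0}$-cosets in axiom (2) of $\Cat$), then $x_{(n)} M_d \subset M_{d-n} = 0$ for every $x \in \fing$ and $n > 0$; hence $\fing[t]t$ acts trivially on $M_d$, so $M_d$ is a finite-dimensional $\fing$-module (finite by axiom (1)) that decomposes into a direct sum of simple $\fing$-modules. For any simple $\fing$-submodule $E \subset M_d$, the formulas recorded just before the lemma identify $M(E,d) = \C[\fing^*_{\infty}] \otimes E$ as the free graded $\C[\fing^*_{\infty}]$-module on an $\fing[t]t$-annihilated, $\fing$-equivariant copy of $E$ placed in $H$-degree $d$; by this universal property the inclusion $E \hookrightarrow M_d$ extends uniquely to a morphism $\phi : M(E,d) \to M$ of graded $\C[\fing^*_{\infty}]$-modules.

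With this in hand I build the filtration inductively. Set $M_0 = 0$. Given $M_{i-1} \subsetneq M$, the quotient $M/M_{i-1}$ still lies in $\Cat$; choose a minimal element $d_i$ of its support, pick a simple $\fing$-submodule $E_i \subset (M/M_{i-1})_{d_i}$, apply the universal property to obtain $\phi_i : M(E_i,d_i) \to M/M_{i-1}$, and let $M_i \subset M$ be the preimage of $\on{im}\phi_i$. Then $M_i/M_{i-1} \cong \on{im}\phi_i$ is a quotient of $M(E_i,d_i)$, giving (2). By the minimality choice, $(M_{i-1})_e = M_e$ for every $e$ strictly below $d_i$ in the coset ordering, and this persists for all later $M_{j-1}$ with $j > i$, so $d_j$ cannot be strictly less than $d_i$ in that ordering; this yields (3). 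For (1), fix any $d_0 \in \C$: only finitely many degrees of $\on{supp}(M)$ lie at or below $d_0$, each with a finite-dimensional $M_e$ having finitely many simple $\fing$-summands, so after finitely many iterations the subspace $\bigoplus_{e \leq d_0} M_e$ has been absorbed into some $M_i$; hence $M = \bigcup_i M_i$.

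If $M$ is finitely generated, take a finite homogeneous generating set and replace it by its $\fing$-span $V \subset M$, a finite-dimensional $\fing$-submodule still generating $M$. Decompose $V = \bigoplus_{j=1}^s F_j$ into simple $\fing$-modules with degrees $c_j$ ordered so that $c_1 \leq \cdots \leq c_s$, and run the inductive construction using (the image of) $F_j$ as the chosen simple summand at step $j$. This produces a filtration of length at most $s$ terminating in $M$, provided one checks at each step that the image of $F_j$ in $M/M_{j-1}$ sits at the minimum degree and is $\fing[t]t$-annihilated; this follows because any element of $M$ of degree strictly less than $c_j$ belongs to the $\C[\fing^*_{\infty}]$-submodule generated by $\{F_k : c_k < c_j\} \subset \{F_1,\dots,F_{j-1}\}$ (the only generators low enough in degree, since $\C[\fing^*_{\infty}]$ acts by operators of nonnegative $H$-degree), hence already lies in $M_{j-1}$. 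The main delicate point throughout is precisely this bookkeeping of grades and the verification that the universal maps from $M(E,d)$ are well-defined at each inductive step; everything else in the argument follows directly from the construction.
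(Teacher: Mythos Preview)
The paper states this lemma without proof, so there is nothing to compare against directly; your argument is the natural one and is essentially correct. Two small remarks:

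\textbf{Condition (3).} What you actually prove is $d_j \not< d_i$ for $i<j$ (degrees weakly increase along the filtration), which is the opposite of the inequality printed in the paper. This is almost certainly a typo on the paper's side: building from minimal degrees can only give the version you prove, and in any event condition (3) is never invoked in the paper's applications of the lemma (Propositions~3.5.1 and~3.7.2 use only (1) and (2)).

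\textbf{The exhaustion in (1).} Your sentence ``after finitely many iterations the subspace $\bigoplus_{e\le d_0}M_e$ has been absorbed'' needs a word of justification when the support of $M$ meets several cosets $d_j+\Z_{\ge 0}$: if at each step you are free to pick \emph{any} minimal degree, nothing prevents you from working forever inside one coset and never touching another. The fix is painless: the coset decomposition $M=\bigoplus_j M^{(j)}$ with $M^{(j)}=\bigoplus_{e\in d_j+\Z}M_e$ is already a decomposition into graded $\C[\fing^*_\infty]$-submodules (all operators shift the $H$-grading by integers), so one may treat each summand separately, where the order is total and your counting argument goes through verbatim; or simply stipulate that the choice of minimal degree cycles through the cosets. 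The finitely generated case is unaffected.
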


\subsection{Jet schemes of Slodowy slices}
Let $f$ be a nilpotent element of $\fing$.
By the Jacobson-Morozov theorem,
 $f$ can be embedded
into 
 an   $\mathfrak{sl}_2$-subalgebra 
\begin{align*}
\mf{s}=\haru_{\C}\{e,h,f\}
\end{align*}
satisfying
\begin{align*}
& [h,e]=2e, \quad [h,f]=-2f,\quad [e,f]=h.
\end{align*}
The form $(\ |\ )$
 gives an isomorphism
\begin{align*}
 \nu:\fing\isomap \fing^*
\end{align*}
of $G$-modules.
The induced isomorphisms $\fing_m\isomap \fing_m^*$
and $\fing_{\infty}\isomap (\fing_{\infty})^*$
are  denoted by $\nu_{\infty}$ and
$\nu_m$,
respectively.

Let $\chi_{\infty}$,
$\chi_m$ and
$\chi$
be the images of $f\in \fing_{\infty}$,
$f+\fing[t]t^{m+1}\in \fing_m$
and $f\in \fing$ by $\nu_{\infty}$,
$\nu_m$ and $\nu$,
respectively.

Let  $\Nil\subset \fing^*$  be  the image of the set  of nilpotent
elements of $\fing$,
and let
$\Sl\subset \fing^*$ be the 
image of  $f+\fing^e$,
where $\fing^e$ is the centralizer of $e$ in $\fing$.
The affine space $\Sl$ is transversal at $\chi$ to $\Ad G.\chi$,
and is often called the
{\em Slodowy slice}.

We have
\begin{align*}
 \Sl_m=\chi_m+\nu_m(\fing^e_{m})\subset
 \fing_{m}^*,
\quad
 \Sl_{\infty}=\chi_{\infty}+\nu_{\infty}(\fing^e_{\infty})\subset \fing_{\infty}^*.
\end{align*}
Because
$ \fing_m=[\fing_m,f]\+ \fing_m^e$,
one has the following assertion.
\begin{Lem}\label{Lem:transversality}
The affine space $\Sl_m$
is transverse to the orbit
 $\Ad G_m .\chi_m$
at $\chi_m$
for any $m\geq 0$.
\end{Lem}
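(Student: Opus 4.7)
The strategy is to translate the geometric transversality condition into a Lie-algebraic identity in $\fing_m$ via the $G_m$-equivariant isomorphism $\nu_m\colon \fing_m \isomap \fing_m^*$, where it becomes the decomposition $\fing_m = [\fing_m, f] \oplus \fing^e_m$ asserted in the paragraph preceding the lemma.

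First I would identify the two relevant tangent spaces at $\chi_m$ inside $T_{\chi_m}(\fing_m^*) = \fing_m^*$. Since $\Sl_m = \chi_m + \nu_m(\fing^e_m)$ is an affine translate of a linear subspace, one has $T_{\chi_m}(\Sl_m) = \nu_m(\fing^e_m)$. The tangent space to the orbit $\Ad G_m . \chi_m$ at $\chi_m$ is the image of the differential at the identity of the orbit map $G_m \to \fing_m^*$, $g \mapsto \Ad^*(g).\chi_m$, which sends $x \in \fing_m = \Lie(G_m)$ to $\ad^*(x).\chi_m$. Using $G_m$-equivariance of $\nu_m$ together with $\chi_m = \nu_m(f)$, one rewrites $\ad^*(x).\chi_m = \nu_m([x,f])$, so $T_{\chi_m}(\Ad G_m.\chi_m) = \nu_m([\fing_m, f])$.

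Pulling the transversality requirement $T_{\chi_m}(\Ad G_m.\chi_m) + T_{\chi_m}(\Sl_m) = \fing_m^*$ back through the linear isomorphism $\nu_m\inv$ then reduces the lemma to the identity
\[
[\fing_m, f] + \fing^e_m = \fing_m,
\]
which is exactly the (stronger, direct-sum) decomposition stated in the preceding paragraph. That decomposition itself is an immediate consequence of the classical finite-dimensional one $\fing = [\fing, f] \oplus \fing^e$ (from $\mathfrak{sl}_2$-representation theory applied to the adjoint module) after tensoring with $\C[t]/(t^{m+1})$, since both $[-, f]$ and the condition of commuting with $e$ are $\C[t]$-linear in the loop variable. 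There is no substantive obstacle; the only point requiring a moment of care is the equivariance bookkeeping that converts the coadjoint orbit tangent into $\nu_m([\fing_m, f])$.
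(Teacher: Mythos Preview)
Your proposal is correct and follows exactly the same approach as the paper: the paper's entire proof is the single sentence ``Because $\fing_m=[\fing_m,f]\+ \fing_m^e$, one has the following assertion,'' and you have simply unpacked why that decomposition yields the transversality statement. Your elaboration of the tangent-space identifications and the equivariance bookkeeping is a faithful expansion of what the paper leaves implicit.
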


It is known \cite[\S 3]{GanGin02} that
the Poisson structure of $\fing^*$
restricts to $\Sl$.
Hence $\C[\Sl_{\infty}]$ is equipped with
the level zero vertex Poisson algebra
induced from the Poisson structure of $\C[\Sl]$.

  \subsection{Good gradings}\label{subsection:good-grading}
Let
\begin{align}
 \fing_j=\bigoplus_{j\in \frac{1}{2}\Z} \fing_j
\label{eq:good-grading}
\end{align}
be a {\em good grading} \cite{KacRoaWak03} for $\mf{s}$,
that is,
a $\frac{1}{2}\Z$-grading of $\fing$
such that
\begin{align}
 &e\in \fing_1,\quad h\in \fing_0,\quad f\in \fing_{-1},\\
&\ad f: \fing_j\ra \fing_{j-1}\text{ is injective for }j\geq
 \frac{1}{2},\\
&\ad f: \fing_j\ra \fing_{j-1}\text{ is surjective for }j\leq \frac{1}{2}.
\end{align}
Let $\x$
be the element 
in $\fing_0$
which 
defines  the grading,
i.e.,
 \begin{align*}
     \fing_j=\{x\in \fing;[\x,x]=j x\}.
    \end{align*}
Good gradings were classified in \cite{ElaKac05}.
The grading defined by 
\begin{align}
\x=\frac{h}{2}
\label{eq:dynkin-h}
\end{align}
is obviously good and 
is called the {\em
Dynkin
grading}.

{\em 
 We assume that the grading \eqref{eq:good-grading}
is compatible with the
triangular decomposition of $\fing$},
that is,
$\finb_+\subset \bigoplus_{j\geq 0}\fing_j$,
 $\finh\subset \fing_0$ and 
$h,\x\in \finh$.

Put
\begin{align}
 \Delta_j=\{\alpha\in \Delta; \fing_{\alpha}\subset \fing_j\},
\quad \Delta_{0,+}=\Delta_+\cap \Delta_0,
\label{eq:dec-of-roots}
\end{align}
so that $\Delta=\bigsqcup_{j\in \frac{1}{2}\Z}\Delta_j$,
$\Delta_+=\Delta_{0,+}\sqcup \bigsqcup_{j>0}\Delta_j$.

\subsection{The fundamental isomorphisms}
Set
\begin{align}
 &\finm=\bigoplus_{j\geq  1}\fing_j,\quad
\finn=\fing_{\frac{1}{2}}\+\bigoplus_{j\geq  1}\fing_j.
\label{eq:nilpotent-subagebras}
\end{align}
Then $\finm\subset \finn$, and they are both nilpotent
subalgebras
of $\fing$.

Denote by $N$ the unipotent subgroup of $G$
with $\on{Lie}(N)=\finn$.
It is known by  \cite{Kos78,GanGin02}
that
 the coadjoint action map
gives the isomorphism of affine varieties 
\begin{align}
N\times \Sl\isomap \chi+\finm^{\bot},
\end{align} 
where 
$\finm^{\bot}\subset \fing^*$ is the annihilator of  $\finm$.
Since $(X\times Y)_{m}\cong X_m\times Y_m$,
we immediately get the following  assertion.
\begin{Pro}\label{Pro:fundamental}
 The coadjoint action maps give the following isomorphisms:
 \begin{align*}
&  N_m\times \Sl_m\isomap \chi_m+(\finm^{\bot})_m
\quad \text{for all }m\geq 0,\\
 & N_\infty\times \Sl_\infty\isomap \chi_{\infty}+(\finm^{\bot})_\infty.
 \end{align*}
\end{Pro}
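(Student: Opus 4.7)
The plan is to deduce the proposition by applying the $m$-th jet scheme functor to the known finite-dimensional isomorphism
\[
N \times \Sl \isomap \chi + \finm^{\bot}
\]
stated just before the proposition. Since taking jet schemes is a functor on schemes of finite type that preserves (finite) products, isomorphisms, and closed immersions, I would first observe that
\[
(N \times \Sl)_m \cong N_m \times \Sl_m,
\]
and that the map in question becomes an isomorphism of $m$-th jet schemes. The infinite case is obtained either by repeating the same argument with $(-)_\infty$ (which also commutes with products and preserves isomorphisms) or by passing to the inverse limit over $m$.

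Next I would identify the right-hand sides. For any closed linear subspace $U \subset \fing^*$ and any point $\chi \in \fing^*$, the translate $\chi + U$ is an affine subspace whose $m$-th jet scheme is naturally $\chi_m + U_m \subset \fing_m^*$; applied to $U = \finm^{\bot}$ this gives $(\chi + \finm^{\bot})_m = \chi_m + (\finm^{\bot})_m$. Likewise, $\Sl = \chi + \nu(\fing^e)$ yields $\Sl_m = \chi_m + \nu_m(\fing^e_m)$, as already recorded in the text. Finally, $N$ being unipotent with $\on{Lie}(N) = \finn$, the jet group $N_m$ is again unipotent with Lie algebra $\finn_m = \finn[t]/(t^{m+1})$, and the map obtained from the coadjoint action by applying $(-)_m$ is precisely the coadjoint action of $N_m$ on $\fing_m^*$ (this follows from naturality of the adjoint and coadjoint representations under the jet scheme functor).

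The only point that requires any care is this last identification — that the induced jet-level map is still implemented by the $N_m$-coadjoint action — but it is immediate from the naturality of the action morphism $N \times \fing^* \to \fing^*$ under the $(-)_m$ functor. Once this is in place, both assertions of the proposition follow. The infinite jet case is identical, using that $N_\infty = G_\infty \cap (\text{exp of } \finn[[t]])$ acts on $\fing^*_\infty$ via the coadjoint action and that $(-)_\infty$ preserves the finite product $N \times \Sl$. No genuine obstacle arises; the work is purely bookkeeping of the jet scheme functor applied to a known isomorphism.
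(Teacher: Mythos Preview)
Your proposal is correct and follows essentially the same approach as the paper: the paper simply cites the known isomorphism $N \times \Sl \isomap \chi + \finm^{\bot}$ from \cite{Kos78,GanGin02} and then invokes $(X \times Y)_m \cong X_m \times Y_m$ to conclude. Your version is just a more explicit unpacking of the same functoriality argument, including the identification of the target affine space and the fact that the induced map is still the coadjoint action.
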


For a Lie algebra $\mf{a}$ and a $\mf{a}$-module $M$,
we write
 $H_{\on{Lie}}^{\bullet}(\mf{a},M)$ 
(respectively $H^{\on{Lie}}_{\bullet}(\mf{a},M)$) for
the Lie algebra $\mf{a}$-cohomology 
(respectively $\mf{a}$-homology)
with the coefficient  
in $M$.

\begin{Co}\label{Co:vanishing-for-functions}
We have the following.
\begin{align*}
& H^i_{\on{Lie}}(\finn_m,
\C[\chi_r+(\finm^{\bot})_m])
\cong \begin{cases}
       \C[\Sl_{m}]&\text{for }i=0,\\
0&\text{for }i>0,
      \end{cases}\\
& H^i_{\on{Lie}}(\finn_{\infty},\C[\chi_{\infty}+(\finm^{\bot})_{\infty}])
\cong \begin{cases}
       \C[\Sl_{\infty}]&\text{for }i=0,\\
0&\text{for }i>0.
      \end{cases}
\end{align*}
\end{Co}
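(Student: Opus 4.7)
My plan is to apply Proposition \ref{Pro:fundamental} to reduce the statement to a computation of Chevalley--Eilenberg cohomology of a unipotent Lie algebra with values in the coordinate ring of its own group, and then invoke the algebraic Poincar\'e lemma.

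First, fix $m\in \Z_{\geq 0}$. Proposition \ref{Pro:fundamental} furnishes an $N_m$-equivariant isomorphism of varieties $N_m\times \Sl_m\isomap \chi_m+(\finm^{\bot})_m$, where $N_m$ acts on the left by left multiplication on the first factor (matching the coadjoint action on the right). Passing to coordinate rings gives a $\finn_m$-equivariant isomorphism
\begin{align*}
\C[\chi_m+(\finm^{\bot})_m]\cong \C[N_m]\* \C[\Sl_m],
\end{align*}
in which $\finn_m$ acts on $\C[N_m]$ by the vector fields generating left translation and trivially on $\C[\Sl_m]$. Pulling out the trivial action, the statement reduces to showing
\begin{align*}
H^i_{\on{Lie}}(\finn_m,\C[N_m])\cong \begin{cases} \C & i=0,\\ 0 & i>0. \end{cases}
\end{align*}

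Second, the Chevalley--Eilenberg complex $\C[N_m]\*\bw{\bullet}\finn_m^*$ with this action is the algebraic de Rham complex of $N_m$ in the trivialization of $T^*N_m$ by invariant $1$-forms. Since $N_m$ is a connected unipotent algebraic group, it is isomorphic as a variety to the affine space $\mathbb{A}^{(m+1)\dim \finn}$, and so by the algebraic Poincar\'e lemma its de Rham cohomology is $\C$ in degree zero and vanishes otherwise. This proves the $m<\infty$ assertion.

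Third, for $m=\infty$ the identical reasoning applies: Proposition \ref{Pro:fundamental} gives $\C[\chi_\infty+(\finm^{\bot})_\infty]\cong \C[N_\infty]\*\C[\Sl_\infty]$, and one is reduced to computing the polynomial de Rham cohomology of the pro-affine variety $N_\infty$, whose coordinate ring is a polynomial algebra in countably many variables (one system of generators for each $t$-order). Alternatively, one may deduce this case from the finite one by writing $\C[\chi_\infty+(\finm^{\bot})_\infty]=\varinjlim_m \pi_{\infty,m}^*\C[\chi_m+(\finm^{\bot})_m]$ and observing that Chevalley--Eilenberg cohomology commutes with this filtered colimit once one uses the standard convention that cochains on $\finn_\infty$ are those whose restriction to each $\finn_m$ lies in $\C[N_m]\*\bw{\bullet}\finn_m^*$. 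The main obstacle is precisely this last point: one must choose the right notion of cochains for the pro-nilpotent $\finn_\infty$, since a naive use of arbitrary $\C$-linear cochains would not give the polynomial de Rham cohomology; once the standard convention is fixed, the vanishing follows formally from the finite-level assertion.
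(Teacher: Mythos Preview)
Your argument is correct and follows the route the paper has in mind: the Corollary is stated immediately after Proposition~\ref{Pro:fundamental} with no proof, and the intended deduction is exactly to factor the module as $\C[N_m]\otimes\C[\Sl_m]$ and use that $H^i_{\on{Lie}}(\finn_m,\C[N_m])=\delta_{i,0}\,\C$. Your identification of the Chevalley--Eilenberg complex with the algebraic de Rham complex of the affine space $N_m$ is a clean way to justify that last vanishing; your remark on the correct (continuous/colimit) convention for cochains on $\finn_\infty$ is appropriate and matches how the paper later works with $\finn[t]$ rather than $\finn[[t]]$ in Proposition~\ref{Pro:cohomology-vanishing-gr}.
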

 \subsection{The $\C^*$-action}
Let
$I_{\infty}$ be the ideal of $\C[\fing^*_{\infty}]$
generated by 
$x-\chi_{\infty}(x)$
with 
$x\in \finm[t\inv]t\inv$,
$I_m=I_{\infty}\cap \C[\fing^*_m]$.
Then
\begin{align*}
 \C[\chi_{\infty}+(\finm^{\bot})_{\infty}]=
\C[\fing^*_{\infty}]/I_{\infty},\quad
 \C[\chi_{m}+(\finm^{\bot})_{m}]=
\C[\fing^*_{m}]/I_{m}.
\end{align*}
(For $m=0$ we have
$\C[\chi+\finm^{\bot}]=\C[\fing^*]/I$,
where $I=I_0$.)

Hence
Corollary \ref{Co:vanishing-for-functions}
gives the isomorphism
\begin{align}
 \C[\Sl_{\infty}]\cong
 (\C[\fing^*_{\infty}]/I_{\infty})^{\finn[t]}
,\quad
 \C[\Sl_{m}]\cong
 (\C[\fing^*_{m}]/I_{m})^{\finn_m[t]}\quad\text{for all }m\geq 0.
\label{eq:slodowy-as-subquotient}
\end{align}
(For $m=0$ we have
$\C[\Sl]\cong (\C[\fing^*]/I)^{\finn}$.)
Set
\begin{align*}
 H_{\new}=H- (\x)_{(0)}\in \End \C[\fing^*_{\infty}].
\end{align*}
This defines 
a new $\frac{1}{2}\Z$-grading on 
the vertex Poisson algebra
$\C[\fing^*_{\infty}]$:
\begin{align*}
\C[\fing^*_{\infty}]
=\bigoplus_{\Delta\in \frac{1}{2}\Z}
\C[\fing^*_{\infty}]
_{\Delta,\new},\quad
\C[\fing^*_{\infty}]_{\Delta,\new}=\{a\in \C[\fing^*_{\infty}];
H_{\new}a=\Delta a\}.
\end{align*}

As easily seen $H_{\new}$ stabilizes
$I_{\infty}$ and $I_{m}$.
Therefore
it
acts on 
$\C[\chi_{\infty}+(\finm^{\bot})_{\infty}]$
and $\C[\chi_m+(\finm^{\bot})_{m}]$.
It gives $\frac{1}{2}\Z_{\geq 0}$-gradings
\begin{align}
 &\C[\chi_{\infty}+(\finm^{\bot})_{\infty}
]
=\bigoplus_{\Delta
\in \frac{1}{2}\Z_{\geq
 0}}\C[\chi_{\infty}+(\finm^{\bot})_{\infty}]_{\Delta},
\quad \dim \C[\chi_{\infty}+(\finm^{\bot})_{\infty}]_0=1,\\
 &\C[\chi_m+(\finm^{\bot})_{\infty}]
=\bigoplus_{\Delta
\in \frac{1}{2}\Z_{\geq
 0}}\C[\chi_m+(\finm^{\bot})_{m}
]_{\Delta},
\quad \dim \C[\chi_m+(\finm^{\bot})_{m}]_0=1,
\end{align}
where 
$\C[\chi_{\infty}+(\finm^{\bot})_{\infty}
]_{\Delta}$
and $\C[\chi_m+(\finm^{\bot})_{m}]_{\Delta}$
are eigenspaces of $H_{\new}$ with eigenvalue $\Delta$.
This gives
$\C^*$-actions
on 
$\chi_{\infty}+(\finm^{\bot})_{\infty}$
and 
$\chi_{m}+(\finm^{\bot})_{\infty}$,
which contract to the unique fixed points $\chi_{\infty}$ and $\chi_m$,
respectively.
Therefore
by
Lemma \ref{Lem:transversality}
and \cite[1.4]{Gin08} 
we have 
the following assertion.
\begin{Pro}\label{Pro:strong-transversality}
For each $m\geq 0$,
 any $G_m$ orbit in
$\fing_m$ meets the affine space
$\chi+(\finm^{\bot})_m$
transversely.
\end{Pro}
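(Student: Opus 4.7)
The plan is to reduce the statement to a direct application of Ginzburg's transversality lemma \cite[1.4]{Gin08}, whose hypotheses are (a) a $\C^*$-equivariant affine subspace contracting to a single fixed point inside a $G$-variety, and (b) transversality of that affine subspace to the relevant $G$-orbit at the fixed point. Given the work already carried out in \S\ref{subsection:good-grading} and the preceding subsection, the statement reduces to verifying these two inputs in the jet-scheme setting and then invoking Ginzburg's argument mutatis mutandis.

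First I would check the $\C^*$-hypothesis. The operator $H_{\new} = H - (\x)_{(0)}$ stabilizes the ideal $I_m$ and hence acts on $\chi_m + (\finm^{\bot})_m$; the non-negative grading displayed just before the proposition, together with $\dim \C[\chi_m+(\finm^{\bot})_m]_0 = 1$, says that the resulting $\C^*$-action contracts the whole affine space to the unique fixed point $\chi_m$. Because $H_{\new}$ combines the global jet-scheme grading $H$ (which rescales the fibers of $\fing_m^* \to \fing^*$) with the infinitesimal inner automorphism $-(\x)_{(0)}$ coming from $\x \in \finh$, the $\C^*$-action is compatible with $G_m$ in the sense required by \cite[1.4]{Gin08}: translation by this $\C^*$ rescales $G_m$-orbits to $G_m$-orbits.

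Second I would produce transversality at the fixed point. By Lemma \ref{Lem:transversality}, the jet slice $\Sl_m = \chi_m + \nu_m(\fing_m^e)$ is transverse to $\Ad G_m.\chi_m$ at $\chi_m$. By Proposition \ref{Pro:fundamental}, the isomorphism $N_m \times \Sl_m \isomap \chi_m + (\finm^{\bot})_m$ yields
\[
T_{\chi_m}\bigl(\chi_m+(\finm^{\bot})_m\bigr) \;=\; T_{\chi_m}(N_m.\chi_m) \oplus T_{\chi_m}\Sl_m.
\]
Since $N_m.\chi_m \subset \Ad G_m.\chi_m$, adding $T_{\chi_m}(\Ad G_m.\chi_m)$ to the left-hand side already absorbs the first summand, so the transversality of $\Sl_m$ upgrades to transversality of the full affine space $\chi_m + (\finm^{\bot})_m$ to $\Ad G_m.\chi_m$ at $\chi_m$.

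Third I would run Ginzburg's $\C^*$-argument. For any $y \in \chi_m + (\finm^{\bot})_m$ the $\C^*$-flow contracts $y$ to $\chi_m$, and by $\C^*$-compatibility the $G_m$-orbit through $y$ flows to the $G_m$-orbit through $\chi_m$. The sum $T_y(G_m.y) + T_y(\chi_m+(\finm^{\bot})_m)$ is a $\C^*$-equivariant family of subspaces of $T\fing_m^*$ whose limit as $t \to 0$ equals the corresponding sum at $\chi_m$, which is all of $T_{\chi_m}\fing_m^*$ by Step 2. By upper semicontinuity of the codimension of a sum of subspaces, the sum at $y$ is also the whole tangent space, giving the desired transversality. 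The main obstacle is the Step 1 compatibility check: one must confirm that the inner-derivation term $-(\x)_{(0)}$ inside $H_{\new}$ is exactly what makes the $\C^*$-action normalize the $G_m$-action (as opposed to merely commuting with a subgroup), since without this precise compatibility the limit argument in Step 3 cannot conclude transversality of $G_m$-orbits. Everything else is routine jet-scheme bookkeeping built on Proposition \ref{Pro:fundamental}.
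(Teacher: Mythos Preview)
Your proposal is correct and follows essentially the same route as the paper: the paper's entire proof is the single sentence ``by Lemma \ref{Lem:transversality} and \cite[1.4]{Gin08}'', and what you have written is precisely an unpacking of that citation, including the (trivial) passage from transversality of $\Sl_m$ to that of the larger affine space $\chi_m+(\finm^{\bot})_m$ at $\chi_m$. Your ``main obstacle'' in Step~1 is real but routine: the $(\x)_{(0)}$ part is the adjoint action of a one-parameter subgroup of $G\subset G_m$, and the $H$ part is the loop-rotation $\C^*$ acting by automorphisms of $G_m$, so the combined $\C^*$ indeed normalizes $G_m$.
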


As $\finn[t]$ is stable under the adjoint action of
$H_{\new}$,
it follows from \eqref{eq:slodowy-as-subquotient}
that
there are induced  contracting  $\C^*$-actions
on
$\Sl_{\infty}$
and $\Sl_m$
as well.

\subsection{The flatness}
Let $\C[\finm_{\infty}^*]_{\chi_{\infty}}$ be the localization of 
the polynomial algebra 
$\C[\finm_{\infty}^*]$
at the point $\chi_{\infty}$.

\begin{Pro}\label{Pro:flatness}
 Let $M$ be a finitely generated 
object of 
$\Cat$.
Then, for any $x\in \chi_{\infty}+(\finm^{\bot_{\fing}})_{\infty}$,
the localization $M_x$  of $M$ at $x$ is flat over
$\C[\finm^*_{\infty}]_{\chi}$.
\end{Pro}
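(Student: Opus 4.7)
The plan is a two-step reduction: first use the contracting $\C^*$-action on $\chi_\infty+(\finm^{\bot})_\infty$ to reduce to the unique fixed point $\chi_\infty$, then establish flatness at that point using the structure of the category $\Cat$ together with the splitting of $\fing^*_\infty$ over $\finm^*_\infty$. Since $H_{\new}$ endows $\C[\chi_\infty+(\finm^{\bot})_\infty]$ with a non-negative grading having one-dimensional degree-zero component (so the induced $\C^*$-action contracts onto $\chi_\infty$), and since every $M\in\Cat$ is compatibly graded with finite-dimensional homogeneous components bounded below, a standard graded-Nakayama/spreading-out argument (in the spirit of \cite[1.4]{Gin08}) shows that $M_x$ is flat over $\C[\finm^*_\infty]_\chi$ for some $x\in \chi_\infty+(\finm^{\bot})_\infty$ if and only if it is flat at $x=\chi_\infty$.

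At the fixed point, I would exploit the splitting $\fing^*_\infty\cong \finm^*_\infty\times(\finm^{\bot})_\infty$ coming from the good grading together with the invariant form. This yields $\C[\fing^*_\infty]\cong \C[\finm^*_\infty]\*\C[(\finm^{\bot})_\infty]$ as $\C[\finm^*_\infty]$-algebras; in particular $\C[\fing^*_\infty]$ is free over $\C[\finm^*_\infty]$, so each induced module $M(E,d)=\C[\fing^*_\infty]\* E$ is free, hence flat, over $\C[\finm^*_\infty]$. Applying Lemma~\ref{Lem:highest-weight-filtration} to the finitely generated $M$ produces a finite filtration $0=M_0\subset M_1\subset\cdots\subset M_r=M$ whose successive quotients are quotients of such $M(E_i,d_i)$.

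To propagate flatness through the filtration to each subquotient $M_i/M_{i-1}$, one must verify $\on{Tor}_1^{\C[\finm^*_\infty]_\chi}\bigl((M_i/M_{i-1})_{\chi_\infty},\C_\chi\bigr)=0$. By the long exact sequence of $\on{Tor}$ associated with $0\to K_i\to M(E_i,d_i)\to M_i/M_{i-1}\to 0$ and the freeness of $M(E_i,d_i)$, this reduces to showing that the map $K_i/\mf{m}_\chi K_i\to M(E_i,d_i)/\mf{m}_\chi M(E_i,d_i)$ is injective after localization. I would establish this via the $G_\infty$-equivariance of $M$ together with Proposition~\ref{Pro:fundamental} and the cohomology vanishing of Corollary~\ref{Co:vanishing-for-functions}: the trivialization $N_\infty\times \Sl_\infty\isomap \chi_\infty+(\finm^{\bot})_\infty$ combined with the vanishing of the higher Chevalley-Eilenberg cohomology of $\finn_\infty$ with coefficients in $\C[\chi_\infty+(\finm^{\bot})_\infty]$ yields a $G_\infty$-equivariant splitting of the restriction of $M(E_i,d_i)$ to $\chi_\infty+(\finm^{\bot})_\infty$ compatible with $K_i$.

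The main obstacle is the non-Noetherian nature of $\C[\finm^*_\infty]$, which precludes direct use of standard local flatness criteria. One must organize the Koszul/$\on{Tor}$ computation via the $H_{\new}$-grading, reducing every infinite-rank construction to finite-dimensional graded pieces in each degree, and use the $G_\infty$-equivariance to translate the abelian Koszul vanishing into the non-abelian Lie algebra cohomology vanishing provided by Corollary~\ref{Co:vanishing-for-functions}.
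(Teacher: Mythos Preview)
Your proposal has a genuine gap at the crucial step, and it differs substantially from the paper's argument.

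The paper does \emph{not} work directly at the infinite-jet level. Instead, after the same initial reduction via Lemma~\ref{Lem:highest-weight-filtration} to a quotient $M$ of $M(E,d)$, it writes $M=\bigcup_r M_r$ with $M_r=\C[\fing^*_r]\cdot U$ for the image $U$ of $E$; each $M_r$ is then a \emph{coherent} $G_r$-equivariant sheaf on the finite-dimensional affine space $\fing^*_r$. Flatness of $(M_r)_{x_r}$ over $\C[\finm^*_r]_{\chi_r}$ follows from the transversality of every $G_r$-orbit with $\chi_r+(\finm^{\bot})_r$ (Proposition~\ref{Pro:strong-transversality}) by invoking \cite[Corollary~1.3.8]{Gin08}, and one passes to the colimit using \cite[Corollary~6.5]{Eis95}. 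The point is that Ginzburg's flatness criterion applies to an arbitrary $G_r$-equivariant coherent sheaf, so it handles quotients of $M(E,d)$ automatically; no separate $\on{Tor}$ computation is needed.

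Your approach founders precisely where you sense it does. Freeness of $M(E_i,d_i)$ over $\C[\finm^*_\infty]$ is fine, but flatness is not inherited by quotients, and your proposed verification that $K_i/\mf m_\chi K_i\hookrightarrow M(E_i,d_i)/\mf m_\chi M(E_i,d_i)$ is not an argument: Corollary~\ref{Co:vanishing-for-functions} concerns only $\C[\chi_\infty+(\finm^{\bot})_\infty]$, not an arbitrary submodule $K_i$, and there is no ``$G_\infty$-equivariant splitting compatible with $K_i$'' available without already knowing something like the flatness you are trying to prove. Moreover, the local flatness criterion you implicitly rely on (vanishing of $\on{Tor}_1$ against the residue field implies flatness) requires Noetherian hypotheses or finite presentation, neither of which holds over $\C[\finm^*_\infty]$. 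The paper's passage to finite $r$ is exactly what supplies coherence and lets the transversality argument do the work; you should adopt that reduction rather than attempting a direct infinite-level $\on{Tor}$ computation.
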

\begin{proof}
By Lemma \ref{Lem:highest-weight-filtration},
it suffices to 
prove the assertion in the case that $M$
is a quotient of  $\Delta(E)$
for some finite-dimensional simple $\fing$-module $E$.

Let $U$ be the image of $\C\*E\subset \Delta(E)$
under the quotient map $\Delta(E)\ra M$.
Set
$M_r=\C[\fing^*_r]U$.
Then 
$M=\bigcup_r M_r$.
Note that  
the  $G_{\infty}$-module
structure of $M$
induces the $G_r$-module structure on
  $M_r$.

We have $\C[\finm^*_r]_{\chi_r}\subset \C[\finm^*_{r+1}]_{\chi_{r+1}}$
and 
$\C[\finm^*_{\infty}]_{\chi_{\infty}}=\bigcup_r \C[\finm^*_r]_{\chi_r}$.
Let  $x_r=\pi_{\infty,r}(x)$,
where $\pi_{\infty,r}: \fing^*_{\infty}\ra \fing^*_m$ is the projection.
Then $M_{x}=\bigcup_r (M_r)_{x_r}$. 
By \cite[Corollary 6.5]{Eis95}
it is sufficient to show that
each $(M_r)_{x_r}$ is flat over $\C[\finm^*_r]_{\chi_r}$.

Let
$\tilde{M}_r$ be the sheaf on 
the affine space $\fing^*_r$ corresponding to the
$\C[\fing^*_r]$-module $M_r$.
Because  $\tilde{M}_r$ is $G_r$-equivalent and coherent,
 we can apply by Proposition \ref{Pro:strong-transversality}
the argument of
\cite[Corollary 1.3.8]{Gin08} to get  that
 $(M_r)_{x_r}$ is flat over $\C[\finm^*_{\chi_r}]$.
This completes the proof.
\end{proof}

\subsection{A homology vanishing}\label{subsection Homology vanishing}
 Let $\{y_i\}$ be a basis of $\finm[t\inv]t\inv$.
Set
\begin{align*}
t_i=y_i-\chi(y_i),
\end{align*}
so that
\begin{align}
I_{\infty}=\sum_i t_i \C[\fing^*_{\infty}].
\label{eq:functions-of-f+m}
\end{align}

For a 
module $M$
over $\C[\finm^*_{\infty}]$ as a ring,
let 
$H_{\bullet}^{\on{Kos}}(\finm[t\inv]t\inv,
M\* \C_{-\chi}
)$
denote
 the homology of the Koszul complex
associated with  $M$
and the sequence
$t_1,t_2,\dots $.
By definition 
\begin{align*}
 H^{\on{Kos}}_{0}(\finm[t\inv]t\inv, M\* \C_{-\chi})
=M/I_{\infty} M.
\end{align*}
\begin{Pro}\label{Pro:vanishing-of-homology}
We have
 $H_i^{\on{Kos}}(\finm[t\inv]t\inv, M\* \C_{-\chi})=0$ for
 $i>0$
and
 any abject $M$ of $\Cat$.
\end{Pro}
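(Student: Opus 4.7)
The plan is to identify the Koszul homology with a Tor group over the polynomial subring
\[
A := \on{Sym}(\finm[t\inv]t\inv) = \C[\finm^*_{\infty}] \subset \C[\fing^*_{\infty}],
\]
and then deduce the vanishing from Proposition~\ref{Pro:flatness}. Since $A$ is polynomial on the basis $\{y_i\}$ of $\finm[t\inv]t\inv$, the shifted family $\{t_i = y_i - \chi(y_i)\}$ is again a system of polynomial generators, hence a regular sequence in $A$. Writing $A$ as the filtered colimit of its finite-variable polynomial subrings presents the Koszul complex $K(t_{\bullet}) = A \otimes \bigwedge^{\bullet}(\finm[t\inv]t\inv)$ as a filtered colimit of ordinary finite Koszul resolutions, so it is a free $A$-resolution of $A/(t_i) = \C_{-\chi}$. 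Consequently
\[
H_i^{\on{Kos}}(\finm[t\inv]t\inv,\, M \otimes \C_{-\chi}) \cong \on{Tor}^A_i(M, \C_{-\chi}).
\]

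Lemma~\ref{Lem:highest-weight-filtration} writes any $M \in \Cat$ as a filtered colimit of its finitely generated subobjects, and Tor commutes with filtered colimits, so it suffices to handle finitely generated $M$. The standard Koszul contraction homotopy shows that multiplication by each $t_i$ is null-homotopic on $K(t_{\bullet})$, so the ideal $\mf{m}_{\chi} := (t_i) \subset A$ annihilates $\on{Tor}^A_i(M, \C_{-\chi})$. Viewed as a $\C[\fing^*_{\infty}]$-module via the action on $M$, this Tor is therefore supported on the closed subscheme $V(I_{\infty}) = \chi_{\infty} + (\finm^{\bot})_{\infty}$, and it is enough to show that for every $x \in \chi_{\infty} + (\finm^{\bot})_{\infty}$,
\[
\on{Tor}^A_i(M, \C_{-\chi})_x \cong \on{Tor}^{A_{\mf{m}_{\chi}}}_i(M_x,\, A_{\mf{m}_{\chi}}/\mf{m}_{\chi}) = 0 \qquad (i > 0).
\]
For the first isomorphism, localization at $\C[\fing^*_{\infty}] \setminus \mf{m}_x$ is exact and commutes with the finite-free Koszul complex; moreover $A \setminus \mf{m}_{\chi}$ becomes invertible in $\C[\fing^*_{\infty}]_x$ (since $x$ lies over $\chi$ in $\on{Spec} A$), so $M_x$ inherits an $A_{\mf{m}_{\chi}}$-module structure and the localized Koszul complex still resolves $A_{\mf{m}_{\chi}}/\mf{m}_{\chi}$. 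The vanishing on the right is then immediate from Proposition~\ref{Pro:flatness}, which asserts precisely that $M_x$ is flat over $A_{\mf{m}_{\chi}}$.

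The main technical obstacle is the non-Noetherian nature of $A$ together with the infinite Koszul complex: both showing that $K(t_{\bullet})$ is a free resolution of $\C_{-\chi}$ and reducing the vanishing for arbitrary $M \in \Cat$ to the finitely generated case require filtered-colimit arguments. The crucial bridge is the support restriction to $V(I_{\infty})$: it converts the pointwise flatness supplied by Proposition~\ref{Pro:flatness} into the global Koszul vanishing we need, sidestepping the fact that Proposition~\ref{Pro:flatness} produces flatness of the $\C[\fing^*_{\infty}]$-localization $M_x$ rather than of the $A$-localization of $M$.
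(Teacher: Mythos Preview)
Your argument is correct and takes a genuinely different route from the paper's. Both proofs reduce to finitely generated $M$ and use Proposition~\ref{Pro:flatness} as the essential input, but they diverge from there. The paper first strips off the torsion submodule $M_{\on{tor}}$ via Lemma~\ref{Lem:homology-of-torsion-module-is-zero} and then proves directly that $\{t_i\}$ is a regular sequence on $\bar M = M/M_{\on{tor}}$ by localizing at points of $\chi_{\infty}+(\finm^{\bot})_{\infty}$. Your approach bypasses the torsion lemma entirely: you identify the Koszul homology with $\on{Tor}^A_i(M,\C_{-\chi})$, observe that this $\on{Tor}$ is annihilated by $I_{\infty}$ (so it is a module over $\C[\fing^*_{\infty}]/I_{\infty}$), and then kill it by localizing at each point $x\in\chi_{\infty}+(\finm^{\bot})_{\infty}$ and invoking flatness. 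What your approach buys is a cleaner, more conceptual argument that avoids the separate treatment of torsion; what the paper's approach buys is the slightly stronger intermediate statement that the sequence is genuinely regular on $\bar M$.

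One step in your argument is correct but left implicit and deserves a word of justification: the passage from ``$T_x=0$ for every $x\in\chi_{\infty}+(\finm^{\bot})_{\infty}$'' to ``$T=0$''. A module vanishes if and only if it vanishes at every \emph{maximal} ideal, so you need to know that every maximal ideal of the infinite polynomial ring $\C[\chi_{\infty}+(\finm^{\bot})_{\infty}]$ is a $\C$-point. This is true because the ring is countably generated over the uncountable field $\C$, so any residue field is a countable-dimensional extension of $\C$ and hence equal to $\C$ (a standard Nullstellensatz-type fact). Once this is said, your proof is complete.
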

Let
\begin{align*}
 M_{\on{tor}}=\{m\in M; m_x=0\text{ for all }x\in 
\chi_{\infty}+(\finm^{\bot})_{\infty} \}.
\end{align*}
Then $M_{\on{tor}}$ is a 
submodule of $M$
over $\C[\finm_{\infty}^*]$ as a ring.

The following assertion follows from  \cite[Proposition 17.14b]{Eis95}.
\begin{Lem}\label{Lem:homology-of-torsion-module-is-zero}
$H^{\on{Kos}}_{\bullet}(\finm[t\inv]t\inv,M_{\on{tor}}\* \C_{-\chi})= 0$.
\end{Lem}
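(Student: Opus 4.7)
The plan is to apply the cited Proposition 17.14b of \cite{Eis95}, which in the form needed here says that the Koszul homology $H^{\on{Kos}}_\bullet(t_1, t_2, \dots; N)$ over $\C[\finm^*_{\infty}]$ vanishes whenever $\Ann_{\C[\finm^*_{\infty}]}(N) + (t_1, t_2, \dots) = \C[\finm^*_{\infty}]$; this reflects the general fact that Koszul homology is simultaneously annihilated by the annihilator of the module and by the Koszul ideal, so if these together generate the unit ideal the homology is killed by $1$.

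I would proceed by two reductions. First, since the Koszul complex on the infinite sequence $\{t_i\}$ is built from a free exterior algebra and the module argument enters tensorially, it commutes with filtered colimits; hence I may replace $M_{\on{tor}}$ by a finitely generated $\C[\fing^*_{\infty}]$-submodule $N \subset M_{\on{tor}}$. Second, I would show that such $N$ is annihilated by some $s \in \C[\finm^*_{\infty}]$ with $s \notin (t_1, t_2, \dots)$. The key observation is that under the projection $\fing^*_{\infty} \to \finm^*_{\infty}$ the subvariety $V(I_\infty) = \chi_{\infty} + (\finm^{\bot})_{\infty}$ collapses to the single closed point $\bar{x}_0$ cut out by $(t_1, t_2, \dots)$ in $\C[\finm^*_{\infty}]$. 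The definition of $M_{\on{tor}}$ forces $N_x = 0$ as an $R$-module stalk for every $x$ in this fiber, and combining this with the flatness in Proposition~\ref{Pro:flatness} together with a Nakayama argument (available since $N$ is finitely generated) would convert the fiberwise vanishing into the vanishing of the $\C[\finm^*_{\infty}]$-module stalk $N_{\bar{x}_0}$, which produces the desired $s$.

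The hardest part will be this second step: translating the $R$-module stalk vanishing over the infinite-dimensional fiber $V(I_\infty)$ into a genuine annihilation by an element of the subring $\C[\finm^*_{\infty}]$. The non-Noetherianity of $\C[\fing^*_{\infty}]$ is the source of the subtlety, which is precisely why both the finite-generation reduction and the flatness of Proposition~\ref{Pro:flatness} are essential: together they render a Nakayama-type argument applicable, base-changing the vanishing from the fiber to the base point, after which Eisenbud's criterion immediately gives the lemma upon passing back to the colimit.
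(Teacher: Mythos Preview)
Your overall strategy is sound---reduce to a finitely generated submodule $N\subset M_{\on{tor}}$ via a colimit argument and then verify the hypothesis of Eisenbud's criterion---and this is a reasonable unpacking of the paper's one-line citation, which gives no further details.  The gap lies in your proposed verification of the hypothesis.  You want an $s\in\C[\finm^*_{\infty}]$ with $s\notin(t_1,t_2,\dots)$ and $sN=0$, and you invoke Proposition~\ref{Pro:flatness} together with a Nakayama argument.  Neither ingredient does what you claim.  Proposition~\ref{Pro:flatness} concerns the flatness of $M_x$ over $\C[\finm^*_{\infty}]_{\chi_\infty}$, not of $N$ or $M_{\on{tor}}$, and there is no evident way to transfer that flatness to the torsion submodule.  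As for Nakayama, it would require $N$ to be finitely generated over the \emph{local} ring $\C[\finm^*_{\infty}]_{\bar x_0}$; you only know finite generation over $\C[\fing^*_{\infty}]$, which is much larger, so the deduction ``$N_{\bar x_0}=0\Rightarrow sN=0$ for a single $s$'' is unjustified.

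A clean replacement for your step~2 avoids both devices.  Work over $R=\C[\fing^*_{\infty}]$ and note that the Koszul homology $H_j$ is an $R$-module annihilated by $I_\infty$ (Eisenbud 17.14a), hence an $R/I_\infty$-module.  For every $\C$-point $x\in V(I_\infty)$ one has $(M_{\on{tor}})_x=0$ directly from the definition (if $m_x=0$ in $M_x$ then $s m=0$ for some $s$ with $s(x)\neq 0$, whence $m/t=0$ in $(M_{\on{tor}})_x$), so $(H_j)_x=0$.  Now $R/I_\infty$ is a polynomial $\C$-algebra in countably many variables, and since $\C$ is an \emph{uncountable} algebraically closed field every maximal ideal of such an algebra is a $\C$-point (a residue field properly containing $\C$ would contain $\C(t)$, whose elements $\{1/(t-a):a\in\C\}$ are $\C$-linearly independent and uncountable, contradicting countable generation).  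Hence $(H_j)_{\mathfrak m}=0$ for every maximal ideal $\mathfrak m$ of $R/I_\infty$, so $H_j=0$.  Equivalently, for your finitely generated $N$ one gets $\Ann_R(N)+I_\infty=R$ immediately from this Nullstellensatz, after which 17.14 applies; flatness is not needed at all.
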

\begin{proof}[Proof of Proposition \ref{Pro:vanishing-of-homology}]
We may assume that $M$ is finitely generated
as the homology functor commutes with the inductive limits.
 Set
$\overset{\circ}{M}=M/M_{\on{tor}}$.
By Lemma \ref{Lem:homology-of-torsion-module-is-zero}
it suffices to show that
\begin{align*}
H^{\on{Kos}}_i(\finm[t\inv]t\inv,
\overset{\circ}{M}\* \C_{-\chi})=0\text{ for }i>0.
\end{align*}
Clearly, we may assume that
$\overset{\circ}{M}\ne 0$.
It  is enough to show that
$\{t_1,t_2,\dots \}$ is  a 
regular sequence on  $\overset{\circ}{M}$,
that is,
$t_r$ is not a zero divisor of $\overset{\circ}{M}/\sum_{i=1}^{r-1}t_i
\overset{\circ}{M}$ for all $r\geq 0$.

Let $a\in \overset{\circ}{M}$ such that
$t_r a\in \sum_{i=1}^{r-1}t_i \overset{\circ}{M}$.
By localizing it at $x\in \chi_{\infty}+(\finm^{\bot})_{\infty}$,
we get that
$t_r a_x\in \sum_{i=1}^{r-1}t_i \bar{M}_x$.
Because $\overset{\circ}{M}_x=M_x$
for $x\in \chi+(\finm^{\bot})_{\infty}$,
$\overset{\circ}{M}_x$ is flat over $\C[\finm_{\infty}^*]_{\chi_{\infty}}$
by Proposition \ref{Pro:flatness}.
In particular
$\{t_1,t_2\dots,\}$ is  a regular sequence on
$\overset{\circ}{M}_x$ (\cite[Exercise 18.18]{Eis95}).
This forces  $a_x=0$ for any $x\in \chi_{\infty}+(\finm^{\bot})_{\infty}$.
Hence we get that  $a=0$.
This completes the proof.
\end{proof}

\subsection{A cohomology vanishing}
\label{subsection;A cohomology vanishing}
Let $M$ be an object of $\Cat$.
Because 
 the action of $\finn[t]\subset \fing[t]$ on 
$M$ stabilizes $I_{\infty} M$,
$M/I_{\infty}M$ is a $\finn[t]$-module.
Thus
$\left(M/I_{\infty} M\right)^{\finn[t]}$ 
is a module over $\C[\Sl_{\infty}]
=(\C[\fing^*_{\infty}]/I_{\infty})^{\finn[t]}$ as a ring\footnote{%
We will see in (the proof of) 
Theorem \ref{Th:BRST-vanishing-for-assicaited-graded}
that $(M/I_{\infty}M)^{\finn[t]}$
is actually a $\C[\Sl_{\infty}]$-module as a vertex Poisson algebra.}.

The proof of the following assertion is based on \cite[6.2]{GanGin02}.
\begin{Pro}\label{Pro:cohomology-vanishing-gr}
Let $M\in \Cat$.
Then we have 
 $H^i_{\on{Lie}}(\finn[t], M/I_{\infty} M)=0$
for $i>0$.
\end{Pro}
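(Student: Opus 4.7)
The plan is to identify $M/I_\infty M$ with $\C[N_\infty] \otimes K$ via the $N_\infty$-torsor structure of Proposition~\ref{Pro:fundamental}, and then reduce the cohomology computation to the trivial algebraic de Rham cohomology of the pro-unipotent group $N_\infty$.

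Every $M \in \Cat$ carries a $G_\infty$-action extending the $\fing[t]$-action (cf.\ \S~\ref{subsection:category-C}), and since $\chi_\infty + (\finm^\bot)_\infty$ is $N_\infty$-stable, so is the ideal $I_\infty$; hence $M/I_\infty M$ is an $N_\infty$-equivariant quasi-coherent sheaf on $\chi_\infty + (\finm^\bot)_\infty$. By Proposition~\ref{Pro:fundamental}, the coadjoint action gives an $N_\infty$-equivariant isomorphism $N_\infty \times \Sl_\infty \isomap \chi_\infty + (\finm^\bot)_\infty$ with $N_\infty$ acting by left translation on the first factor. The target is thus a trivial $N_\infty$-torsor over $\Sl_\infty$, and equivariant descent (applied level by level on the finite jets $N_m$, then taking the colimit in $m$) yields
\[
M/I_\infty M \;\cong\; \C[N_\infty] \otimes_\C K,
\]
where $K = (M/I_\infty M)^{\finn[t]}$ is a $\C[\Sl_\infty]$-module carrying trivial $\finn[t]$-action.

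Since $\finn[t]$ acts only through the $\C[N_\infty]$ factor,
\[
H^i_{\on{Lie}}(\finn[t], M/I_\infty M) \;\cong\; H^i_{\on{Lie}}(\finn[t], \C[N_\infty]) \otimes_\C K.
\]
The graded Chevalley--Eilenberg complex for $\finn[t] = \on{Lie} N_\infty$ with coefficients in $\C[N_\infty]$ under the left regular action is identified with the algebraic de Rham complex of $N_\infty$ via the trivialization of $T^* N_\infty$ by left-invariant forms; equivalently, we may apply Corollary~\ref{Co:vanishing-for-functions} to the structure sheaf $\C[\chi_\infty + (\finm^\bot)_\infty] \cong \C[N_\infty] \otimes \C[\Sl_\infty]$, which forces $H^{>0}_{\on{Lie}}(\finn[t], \C[N_\infty]) \otimes \C[\Sl_\infty] = 0$. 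Writing $N_\infty = \varprojlim_m N_m$ with each $N_m$ an affine space and using that Lie algebra cohomology commutes with filtered colimits of coefficient modules, the algebraic Poincar\'e lemma yields $H^{>0}_{\on{Lie}}(\finn[t], \C[N_\infty]) = 0$, completing the proof.

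I expect the main technical obstacle to be the descent step: while the freeness of the $N_\infty$-action on $\chi_\infty + (\finm^\bot)_\infty$ is immediate from Proposition~\ref{Pro:fundamental}, establishing that $N_\infty$-equivariant quasi-coherent sheaves on this pro-scheme are equivalent to quasi-coherent sheaves on $\Sl_\infty$---and that the Hamiltonian grading, which makes the graded Chevalley--Eilenberg complex a finite direct sum in each degree, is compatible with this equivalence---requires careful verification at the level of finite jets and a passage to the limit, which is precisely where the input of \cite{GanGin02} is used.
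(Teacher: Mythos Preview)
Your proposal is correct and aims at exactly the same isomorphism the paper establishes, namely
\[
M/I_\infty M \;\cong\; \C[N_\infty]\otimes_\C (M/I_\infty M)^{\finn[t]},
\]
after which the vanishing follows from Corollary~\ref{Co:vanishing-for-functions}. The difference lies entirely in how this isomorphism is proved.

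You invoke equivariant descent along the $N_\infty$-torsor of Proposition~\ref{Pro:fundamental}, done level by level and passed to the limit; you rightly flag this as the delicate step. The paper avoids this machinery altogether. It writes down the multiplication map
\[
\varphi:\C[\chi_\infty+(\finm^\bot)_\infty]\otimes_{\C[\Sl_\infty]}(M/I_\infty M)^{\finn[t]}\longrightarrow M/I_\infty M
\]
and checks it is an isomorphism by an elementary bootstrap: since $\finn[t]$ acts locally nilpotently on both kernel and cokernel, it suffices to show $(\ker\varphi)^{\finn[t]}=(\coker\varphi)^{\finn[t]}=0$. The kernel case is immediate from left exactness of invariants (on invariants $\varphi$ is the identity). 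For the cokernel, one needs the vanishing of $H^1_{\on{Lie}}(\finn[t],-)$ on the \emph{source} of $\varphi$, but that is already known from Corollary~\ref{Co:vanishing-for-functions} applied to the structure sheaf. Thus the general case is deduced from the special case $M=\C[\fing^*_\infty]$ using only long exact sequences and local nilpotency, with no pro-scheme descent and no passage through finite jets.

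What each approach buys: your descent argument is conceptually cleaner and explains \emph{why} the result holds (free action of a pro-unipotent group), but carries the overhead you identified. The paper's argument is shorter and entirely self-contained within the cohomological setup already in place; it is, in effect, a hands-on proof of descent in this particular situation. If you want to streamline your write-up, replacing the descent paragraph with the paper's $\ker/\coker$ trick removes the technical obstacle you were worried about.
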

\begin{proof}
 We show that
the multiplication  map
\begin{align}
 \varphi:\C[\chi_{\infty}+(\finm^{\bot})_{\infty}]
\*_{\C[\Sl_{\infty}]}
(M/I_{\infty} M)^{\finn[t]}
\ra M/I_{\infty} M
\label{eq:isonap-gr-inthe-proof}
\end{align}
is an isomorphism of 
 $\finn[t]$-modules.
This proves the assertion
since
\begin{align}
& H^i_{\on{Lie}}\left(\finn[t], 
\C[\chi_{\infty}+(\finm^{\bot})_{\infty}]
\*_{\C[\Sl_{\infty}]}
(M/I_{\infty} M)^{\finn[t]}\right)
\nno \\
&=H^i_{\on{Lie}}\left(\finn[t],\C[N_{\infty}]
\*
(M/I_{\infty} M)^{\finn[t]}\right)
\quad\text{(by Proposition \ref{Pro:fundamental})}\nno\\
&
=H^i_{\on{Lie}}\left(\finn[t],\C[N_{\infty}]\right)
\*
(M/I_{\infty} M)^{\finn[t]}
\nno \\
&=
\begin{cases}
(M/I_{\infty} M)^{\finn[t]}&\text{for $i=0$},\\
0&\text{for $i>0$}.
  \end{cases}
\label{eq:2009:08:21:15:19}
\end{align}

To prove $\varphi$ is an isomorphism
we have only to show that
$(\ker \varphi)^{\finn[t]}=0$
and $
(\coker \varphi)^{\finn[t]}=0$
because $\finn[t]$ acts locally nilpotently
on $\ker\varphi$
and $\coker \varphi$.

Applying the left exact functor 
$H^0_{\on{Lie}}(\finn[t],?)$ 
to the exact sequence
$0\ra \ker \varphi \ra 
\C[\chi_{\infty}+(\finm^{\bot})_{\infty}]
\*_{\C[\Sl_{\infty}]}
(M/I_{\infty} M)^{\finn[t]}
\ra M/I_{\infty} M$,
 we obtain the exact sequence
$0\ra (\ker \varphi)^{\finn[t]}
\ra (M/I_{\infty} M)^{\finn[t]}
\ra (M/I_{\infty} M)^{\finn[t]}$.
This gives that  $(\ker \varphi)^{\finn[t]}=0$.
Similarly, the exact sequence 
$0\ra \C[\chi_{\infty}+(\finm^{\bot})_{\infty}]
\*_{\C[\Sl_{\infty}]}
(M/I_{\infty} M)^{\finn[t]}
\ra M/I_{\infty} M\ra \coker \varphi \ra 0$
gives
the exact sequence
\begin{align*}
 0\ra (M/I_{\infty} M)^{\finn[t]}\ra 
(M/I_{\infty} M)^{\finn[t]}\ra 
(\coker \varphi)^{\finn[t]}\ra 0
\end{align*}
by the vanishing of $H^1$
in (\ref{eq:2009:08:21:15:19}).
This gives that
$(\coker \varphi)^{\finn[t]}=0$,
completing the proof.
\end{proof}

The following assertion follows immediately 
from
 Proposition \ref{Pro:vanishing-of-homology}
and Proposition \ref{Pro:cohomology-vanishing-gr}.
 \begin{Th}\label{Th:exactness-of-BRST-reduction-of-vertex-Poisson}
The assignment
\begin{align*}
M\mapsto (M/I_{\infty}M)^{\finn[t]}
\end{align*}
yields an exact functor
from $\mc{C}$ to the category of $\C[\Sl_{\infty}]$-modules.
 \end{Th}

Now let $M$ be an object  of $\KL_k$.
Recall that
$\gr^F M\in \Cat$,
and  the subspace
$\bar M=M/F^1 M\subset \gr^F M$ is a 
Poisson $\C[\fing^*]$-module.
Since
$I_{\infty}\bar M\cap \bar M=I \bar M$
(recall $I=I_0$),
we have the inclusion
$\bar M/I\bar M\hookrightarrow \gr^F M/(I_{\infty}\gr^F M)$.
As the action
of $\fing[t]t$ is trivial on $\bar M$,
This induces the embedding
\begin{align*}
 (\bar M/I\bar M)^{\finn}\hookrightarrow (\gr^F M/I_{\infty}\gr^F M)^{\finn[t]}.
\end{align*}
On the other hand,
the isomorphism 
\eqref{eq:isonap-gr-inthe-proof}
restricts to the isomorphism
\begin{align}
\bar M/I \bar M\cong \C[\chi+\finm^{\bot}]\*_{\C[\Sl]}(\bar M/I
  \bar M)^{\finn}.
\label{eq:iso-for-fd-cases}
\end{align}

 \begin{Pro}\label{Pro:generating-subspace-gr}
\begin{enumerate}
\item Let $M$ be an object of 
$\KL_k^{\Delta}$.
Then
\begin{align*}
(\gr^F M/I_{\infty}\gr^F M)^{\finn[t]}\cong  
\C[\Sl_{\infty}]\*_{\C[\Sl]}(\bar M /I \bar M)^{\finn}
\end{align*}as $\C[\Sl_{\infty}]$-modules.
\item 
Let $M$ be a finitely generated object of $\KL_k$.
Then $(\gr^F M/I_{\infty}\gr^F M)^{\finn[t]}$ is
generated by the subspace
$(\bar M/I\bar M)^{\finn}$
over $\C[\Sl_{\infty}]$.
\end{enumerate}
 \end{Pro}
 \begin{proof}
(i)
By Lemma \ref{Lem:surjection-as-gr-delta},
we have
$
 \gr^F M\cong \C[\fing^*_{\infty}]\*_{\C[\fing^*]}\bar M
$.
It follows from
\eqref{eq:iso-for-fd-cases}.
that
\begin{align*}
&\gr^F M/(I_{\infty} \gr^F M)
\cong (\C[\fing^*_{\infty}]/I_{\infty}
)\*_{\C[\fing^*]/I}(\bar M/I\bar M)\\
&\cong \C[\chi_{\infty}+(\finm^{\bot})_{\infty}]\*_{\C[\chi+\finm^{\bot}]}
(\bar M/I \bar M)
\cong
\C[\chi_{\infty}+(\finm^{\bot})_{\infty}]\*_{\C[\Sl]}
(\bar M/I \bar M)^{\finn}.
\end{align*}
Hence
$
( \gr^F M/I_{\infty} \gr^F M)^{\finn[t]}
\cong \C[\Sl_{\infty}]\*_{\C[\Sl]}(\bar M/I \bar M)^{\finn}
$.
(ii)
Since $M$ is finitely generated,
there is an exact sequence
$P\ra M\ra 0$ with  $P\in \KL^{\Delta}_k$.
By Theorem \ref{Th:exactness-of-BRST-reduction-of-vertex-Poisson},
this induces the exact sequence
$(\gr^F P/I_{\infty}\gr^F P)^{\finn[t]}
\ra (\gr^F M/I_{\infty}\gr^F M)^{\finn[t]}
$,
which restricts to the surjection
$(\bar P/I\bar P)^{\finn}
\ra (\bar M/I \bar M)^{\finn}
\ra 0$.
Therefore
the assertion follows from (i).
 \end{proof}

\section{Associated varieties 
of module over affine vertex algebras and affine $W$-algebras}
\label{section:associated-varieties}
The main purpose of 
 this section is 
to establish  the relation between the  associated varieties
of modules over $\affg$ and those over $\Wg{k}$
(Theorem \ref{Th:BRST-reduction-of-varieties}).
Throughout  this section the level
$k$ is an arbitrary complex number unless otherwise stated.
\subsection{The BRST complex of the generalized quantized Drinfeld-Sokolov
reduction}
For the detail of this subsection we refer the reader to
\cite{KacRoaWak03,KacWak04} and  also to \cite{Ara05}\footnote{In
 \cite{Ara05} $\Fneu$ and $ \Lamsemi{\bullet}$ were denoted by
 $\mathscr{F}^{\on{ne}}(\chi)$ and $\mathscr{F}(L\fing_{>0})$, respectively.}.

Let
$\{u_i; i=1,\dots, \dim \finn\}$ be a homogeneous basis of $\finn$ 
(defined in \eqref{eq:nilpotent-subagebras})
with respect to the grading \eqref{eq:good-grading}
such that $\{u_1,\dots,u_{\dim \fing_{1/2}}\}$ gives a basis of 
$\fing_{1/2}$.
Let $c_{ij}^k$ be the structure constant:
$[u_i,u_j]=\sum_{k}c_{ij}^k u_k$.

Denote by $\Fneu$ 
 the vertex algebra of {\em neutral free superfermions}
associated with $\fing_{1/2}$,
or the $\beta\gamma$-system of rank $\frac{1}{2}
\dim \fing_{1/2}$,
which 
is freely generated by the fields
$\phi_i(z)$ with $i=1,\dots, \dim \fing_{1/2}$ (corresponding to 
the basis $\{u_1,\dots, u_{\dim\fing_{1/2}}\}$)
satisfying the OPE
\begin{align*}
 \phi_i(z)\phi_j(w)\sim \frac{\chi([u_i,u_j])}{z-w}.
\end{align*}

Let
$\Lamsemi{\bullet}=\bigoplus_{i\in \Z}\Lamsemi{i}$
be  the vertex (super)algebra of the {\em semi-infinite forms}
associated with $\finn[t,t\inv]$,
which 
is a vertex superalgebra freely generated by the odd fields
$\psi_1(z),\dots, \psi_{\dim \finn}(z)$
(corresponding to the
basis $\{u_i\}$ of $\finn$)
and 
$\psi_1^*(z),\dots, \psi_{\dim \finn}^*(z)$
(corresponding to the
dual basis $\{u_i^*\}$ of $\{u_i\}$)
satisfying the OPE
\begin{align*}
 \psi_i(z)\psi_j^*(w)\sim \frac{\delta_{ij}}{z-w},\quad
 \psi_i(z)\psi_j(w)\sim 
 \psi_i^*(z)\psi_j^*(w)\sim 0.
\end{align*}
The space $\Lamsemi{\bullet}$
is graded by $\deg (\psi_i)_{(n)}=-1$,
$\deg (\psi_i^*)_{(n)}=1$
and $\deg \1=0$, where $\1$ is the vacuum vector.
We have 
\begin{align*}
  \Lamsemi{\bullet}=\bigoplus_{i\in \Z}\Lamsemi{i}
\cong \bigwedge\nolimits_{op}^{\bullet}(\finn[t\inv]t\inv)
\* \bigwedge\nolimits^{\bullet}(\finn^*[t\inv]),\\
\Lamsemi{i}=\bigoplus_{t-s=i}\bigwedge\nolimits^{s}(\finn[t\inv]t\inv)
\* \bigwedge\nolimits^{t}(\finn^*[t\inv]).
\end{align*}
Here
$(\psi_{i})_{(-n)}$ is identified with $u_it^{-n}\in\finn[t\inv]t\inv$
and $(\psi_{i}^*)_{(-n)}$ is identified with $u_i^*t^{-n}\in
\finn^*[t\inv]t\inv$,
and for a vector space $V$, $\bigwedge^{\bullet}_{op} V$
denotes the Grassmann  algebra 
$\bigwedge^{\bullet} V$ with opposite grading,
that is,
$\bigwedge^i_{op}V=\bigwedge^{-i}V$.

For any  $\Vg{k}$-module $M$,
set \begin{align*}
 &C(M)=M\* \Fneu\* \Lamsemi{\bullet}
=\bigoplus_{i\in \Z}C^i(M),\quad
C^i(M)=M\* \Fneu\* 
\Lamsemi{i}.
\end{align*}
Note that
$C(\Vg{k})$ is a vertex (super)algebra
since it is  a tensor product of vertex (super)algebras,
and $C(M)$ is naturally a module over $C(\Vg{k})$.

Let $Q(z)$ be the filed on $C(\Vg{k})$  defined by
\begin{align*}
 &Q(z)=\sum_{n\in \Z}Q_{(n)}z^{-n-1}\\
&:=\sum_{i=1}^{\dim \finn}
(u_i(z)+\phi_i(z))\psi_i(z)-\frac{1}{2}\sum_{1\leq i,j,k\leq \dim
 \finn}
c_{ij}^k \psi_i^*(z)\psi_j^*(z)\psi_k(z).
\end{align*}
Here 
we have omitted the tensor product symbol
and have put
$\phi_i(z)=\chi(u_i)$
for $i>\dim \fing_{1/2}$.
We have $Q(z)Q(w)\sim 0$,
and therefore, 
\begin{align*}
Q_{(0)}^2=0.
\end{align*}
It follows that
 $(C(M),Q_{(0)})$ is a cochain complex for any $\Vg{k}$-module $M$.

Let
\begin{align*}
& \BRS{\bullet}{M}\teigi H^{\bullet}(C(M))=H^{\bullet}(C(M),Q_{(0)}),\\
& \Wg{k}\teigi \BRS{0}{\Vg{k}}.
\end{align*}
Then
$\Wg{k}$
inherits the vertex algebra structure
from
$C(\Vg{k})$.
The vertex algebra $\Wg{k}$
is called the {\em (universal) $W$-algebra
associated with $(\fing,f)$ at level $k$}.
It is a
 conformal vertex algebra  
with central charge 
\begin{align}
 \dim \fing_0-\frac{1}{2}\dim \fing_{\frac{1}{2}}
-\frac{12}{k+h\che}|\rho-(k+h\che)\x|^2
\label{eq:cc}
\end{align}
provided that $k\ne-h\che$,
where $x_0$ is defined in 
\eqref{eq:dynkin-h}.

For a $\Vg{k}$-module
$M$
(or equivalently 
a smooth $\affg$-module of level $k$),
the $C(\Vg{k})$-module structure on $C(M)$
induces
the $\Wg{k}$-module 
structure on  $\BRS{\bullet}{M}$.

\subsection{Weights of $\BRS{\bullet}{M}$}
By abuse of notation we set
\begin{align*}
 H_{\new}:=-D-\x\in \affh.
\end{align*}
The operator $H_{\new}$
gives a $\frac{1}{2}\Z$-grading on $C(\Vg{k})$,
where $x_0\in \affh$ acts on $C(\Vg{k})$ diagonally.
Because  $H_{\new}$ commutes with $Q_{(0)}$,
the vertex algebra $\Wg{k}$
is $\frac{1}{2}\Z$-graded
by the Hamiltonian $H_{\new}$.
In fact 
$\Wg{k}$ is 
 positively graded
by the Hamiltonian $H_{\new}$
(\cite{FreBen04,KacWak04}).
We denote by $\W_k(\fing,f)$ the unique simple graded quotient of 
$\Wg{k}$.

Similarly,
for $M\in \KL_k$,
the $\Wg{k}$-module
$\BRS{\bullet}{M}$
is graded by $H_{\new}$.
Thus
\begin{align*}
\BRS{\bullet}{M}=\bigoplus_{d\in \C}\BRS{\bullet}{M}_d,
\end{align*}
and the subspace
$ \BRS{\bullet}{M}_d$
is the cohomology of the subcomplex
$
C(M)_d
$.
Here and below,
$C(M)$ is considered as a graded 
$C(\Vg{k})$-module by the Hamiltonian
$H_{\new}$:
$C(M)_d=\{m\in C(M); H_{\new}m =d m\}$.

\subsection{The right exactness of the functor
$\BRS{0}{?}$}
We shall consider the functor
\begin{align}
\BRS{0}{?}: \KL_k\ra \Wg{k}\Mod,\quad
M\mapsto \BRS{0}{M}.
\label{eq:brst-functor}
\end{align}
\begin{Th}[{\cite[15.2]{FreBen04}, \cite[Theorem 6.3]{KacWak04}}]\label{Th:Kac-Wakimoto-vanising}
  Let $M\in \KL_k^{\Delta}$.
Then
$\BRS{i}{M}=0$ 
for all $i\ne 0$,
$\BRS{i}{M}_d$ is finite-dimensional for all $d$,
and $\BRS{0}{M}$ is positively graded.
 \end{Th}

\begin{Th}\label{Th:right-exactness}
 Let $M\in \KL_k$.
Then
\begin{align*}
\BRS{i}{M}=0\quad \text{for all }i> 0.
\end{align*}
Therefore 
the functor $\BRS{0}{?}:\KL_k\ra \Wg{k}\Mod$
is right exact.
\end{Th}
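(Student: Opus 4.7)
The strategy is to establish the stronger assertion $\BRS{i}{M}=0$ for every $i>0$ and $M\in\KL_k$; right exactness of $\BRS{0}{?}$ then follows at once by applying the long exact sequence to any short exact sequence $0\to M'\to M\to M''\to 0$, since the obstruction $\BRS{1}{M'}$ vanishes. Given Theorem \ref{Th:Kac-Wakimoto-vanising}(i), which already provides this vanishing on $\KL_k^\Delta$, the plan is to resolve an arbitrary $M\in\KL_k$ by a complex of BRST-acyclic modules and compare the two spectral sequences of the resulting double complex.

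First, the problem reduces to finitely generated modules: writing $M=\varinjlim_\beta M_\beta$ as the filtered colimit of its finitely generated submodules and observing that the BRST complex $C(M)=M\*\Fneu\*\Lamsemi{\bullet}$ depends linearly on $M$, cohomology commutes with filtered colimits, so $\BRS{i}{M}=\varinjlim_\beta \BRS{i}{M_\beta}$. Next, for a finitely generated $M\in\KL_k$, choose a surjection $N_0\twoheadrightarrow M$ with $N_0\in\KL_k^\Delta$ (which exists by the standing remark at the end of \S\ref{subsection:Kac-Moody}) and set $K_0=\ker(N_0\to M)\in\KL_k$. The long exact sequence from $0\to K_0\to N_0\to M\to 0$, combined with $\BRS{i}{N_0}=0$ for $i\neq 0$, gives $\BRS{i}{M}\cong\BRS{i+1}{K_0}$ for $i\geq 1$. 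To iterate, I would express $K_0$ as a filtered colimit of its finitely generated submodules and repeat, producing an exact resolution
\begin{align*}
 \cdots \to N_1 \to N_0 \to M \to 0
\end{align*}
by objects $N_j$ each acyclic for $\BRS{i}$ with $i\neq 0$, concretely filtered colimits of $\KL_k^\Delta$-objects, whose BRST-acyclicity off degree $0$ is preserved by the exactness of filtered colimits.

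From this resolution I would form the double complex $K^{p,q}=C^q(N_{-p})$ for $p\leq 0$ and $q\in\Z$, with vertical differential induced by $Q_{(0)}$ and horizontal differential coming from the resolution, and compare the two spectral sequences. In one, the functor $C^q(?)={?}\*\Fneu\*\Lamsemi{q}$ is exact (tensor product with a vector space), so the horizontal rows of $K^{\bullet,q}$ are acyclic outside position $p=0$ where they compute $C^q(M)$; the $E_1$-page is concentrated in the single column $p=0$, collapses immediately, and identifies the total cohomology in degree $n$ with $\BRS{n}{M}$. In the other, $E_1^{p,q}=\BRS{q}{N_{-p}}$ vanishes for $q\neq 0$ by the construction of the $N_j$, so the $E_1$-page lies on the single row $q=0$ in non-positive $p$ and again collapses; the total cohomology is then concentrated in non-positive total degrees. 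Comparing, $\BRS{n}{M}=0$ for $n>0$.

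The main obstacle is the non-Noetherianness of $\KL_k$: at each stage of the iterative construction the kernel $K_j$ need not be finitely generated, so the resolution cannot be kept inside $\KL_k^\Delta$ itself. The fix, alluded to via the reference to \cite{AraMal}, is to work with the strictly larger class of filtered colimits of $\KL_k^\Delta$-objects (or an equivalent enlargement), verifying both that this class is closed under the operations used in the iteration and that the spectral-sequence comparison remains valid despite the infinite colimits involved. This careful compatibility between the grading/filtration data on $\KL_k$ and the BRST machinery is the technical heart of the argument.
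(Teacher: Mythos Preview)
Your proposal contains the right raw ingredients (the Kac--Wakimoto vanishing on $\KL_k^{\Delta}$, the dimension-shift isomorphism $\BRS{i}{M}\cong\BRS{i+1}{K_0}$ coming from a $\KL_k^{\Delta}$-cover, and the passage to filtered colimits), but the way you package them into an infinite resolution plus a double-complex spectral sequence introduces two genuine problems that you do not resolve.

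First, the resolution you describe cannot be built inside $\KL_k$. The kernel $K_0$ is typically not finitely generated (the paper explicitly flags non-Noetherianness as the main difficulty), and a single acyclic cover $N_1\twoheadrightarrow K_0$ would have to be an infinite direct sum of objects of $\KL_k^{\Delta}$; such a sum generally violates the defining condition $\dim M_d<\infty$ of $\KL_k$, and ``filtered colimits of $\KL_k^{\Delta}$-objects'' is not a class known to surject onto arbitrary objects of $\KL_k$. Second, even granting the resolution, your double complex $K^{p,q}=C^q(N_{-p})$ lives in the region $p\le 0$, $q\in\Z$, so neither spectral sequence is automatically convergent to the total cohomology; you would need precisely the kind of boundedness you are trying to avoid.

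The argument that actually works (and is what the reference to \cite{AraMal} points to) never builds a global resolution. One iterates your single step instead, but coupled with an \emph{a priori upper bound} on cohomological degree: for fixed $H_{\new}$-weight $d$ and fixed lower bound $d_0$ on the $H_{\new}$-weights of $M$, the complex $C^{\bullet}(M)_d$ vanishes in degrees above an explicit constant depending only on $d-d_0$ (each $\psi^*$-creation mode in $\Lamsemi{\bullet}$ carries strictly positive $H_{\new}$-weight). One then fixes $d$, reduces to finitely generated $M$ by colimits, chooses the cover $P\in\KL_k^{\Delta}$ with the same lowest $H_{\new}$-weight as $M$, obtains $\BRS{i}{M}_d\cong\BRS{i+1}{K_0}_d$ with $d_0(K_0)\ge d_0(M)$, passes to a finitely generated submodule of $K_0$ detecting any nonvanishing, and repeats. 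The lower bound $d_0$ is preserved throughout, so after finitely many steps the cohomological degree exceeds the a priori bound, giving the vanishing. Your resolution approach, if it could be made to converge, would also ultimately rest on exactly this bound---so the descending-induction formulation is both simpler and strictly more direct.
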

\begin{proof}
By Theorem \ref{Th:Kac-Wakimoto-vanising},
the assertion can be proven in the similar manner as 
\cite[Theorem 3.5 (3)]{AraMal}.
\end{proof}
\begin{Co}\label{Co:finite-dimensionality}
Let $M$ be a finitely generated object of $\KL_k$.
Then $\BRS{0}{M}_d$ is finite-dimensional for all $d\in \C$
and $\BRS{0}{M}$ is positively graded.
\end{Co}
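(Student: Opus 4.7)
The strategy is to reduce to Weyl modules via the right exactness established in Theorem \ref{Th:right-exactness}. As noted at the end of \S \ref{subsection:Kac-Moody}, any finitely generated object of $\KL_k$ is a quotient of some object of $\KL_k^{\Delta}$, so I would fix a surjection $\tilde{M} \twoheadrightarrow M$ with $\tilde{M}\in \KL_k^{\Delta}$ admitting a finite Weyl flag
\begin{equation*}
\tilde{M}=\tilde{M}_0\supset \tilde{M}_1\supset\dots\supset \tilde{M}_r=0,\qquad \tilde{M}_i/\tilde{M}_{i+1}\cong \Weyl{\lam_i}.
\end{equation*}
Since the generators of $M$ can be lifted to homogeneous elements of $\tilde{M}$, this surjection can be chosen compatible with the grading coming from $\affh$, hence in particular with the $H_{\new}$-grading.

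Applying the right exact functor $\BRS{0}{?}$ to $\tilde{M}\twoheadrightarrow M$ yields a surjection $\BRS{0}{\tilde{M}}\twoheadrightarrow \BRS{0}{M}$. Because $H_{\new}=-D-\x\in\affh$ acts simultaneously on every object of $\KL_k$ and commutes with $Q_{(0)}$, this surjection is graded, and restricts to a surjection $\BRS{0}{\tilde{M}}_d\twoheadrightarrow \BRS{0}{M}_d$ for each $d\in\C$. It therefore suffices to prove that $\BRS{0}{\tilde{M}}_d$ is finite-dimensional.

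I would argue by induction on the length $r$ of the Weyl flag. The case $r=1$ is Theorem \ref{Th:Kac-Wakimoto-vanising}(ii). For the inductive step, apply $\BRS{\bullet}{?}$ to the short exact sequence $0\to \tilde{M}_1\to \tilde{M}\to \Weyl{\lam_0}\to 0$. The vanishing $\BRS{1}{\Weyl{\lam_0}}=0$ from Theorem \ref{Th:Kac-Wakimoto-vanising}(i) together with Theorem \ref{Th:right-exactness} applied to $\tilde{M}_1\in\KL_k$ produces a short exact sequence
\begin{equation*}
0\to \BRS{0}{\tilde{M}_1}\to \BRS{0}{\tilde{M}}\to \BRS{0}{\Weyl{\lam_0}}\to 0
\end{equation*}
of $H_{\new}$-graded $\Wg{k}$-modules. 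Taking the $d$-eigenspace and invoking Theorem \ref{Th:Kac-Wakimoto-vanising}(ii) for the outer term $\BRS{0}{\Weyl{\lam_0}}_d$ and the inductive hypothesis (applied to $\tilde{M}_1\in\KL_k^{\Delta}$, whose Weyl flag has length $r-1$) for the inner one shows that $\BRS{0}{\tilde{M}}_d$ is finite-dimensional, completing the argument.

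The main obstacle — the vanishing of the higher BRST cohomology for an arbitrary, possibly non-Weyl-flagged, object of $\KL_k$, which is what makes the reduction via a surjection from $\tilde{M}$ work — has already been addressed in Theorem \ref{Th:right-exactness}; once that is in hand, everything else is bookkeeping with the $H_{\new}$-grading and a short induction on the length of a Weyl flag.
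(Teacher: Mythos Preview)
Your proof is correct and takes essentially the same approach as the paper, which states the corollary without proof: a finitely generated $M$ is a quotient of some $P\in\KL_k^{\Delta}$, right exactness gives a graded surjection $\BRS{0}{P}\twoheadrightarrow\BRS{0}{M}$, and Theorem \ref{Th:Kac-Wakimoto-vanising}(ii) already records the finite-dimensionality of $\BRS{0}{P}_d$ for $P\in\KL_k^{\Delta}$. Your induction on the length of the Weyl flag merely reproves the ``In particular'' clause of Theorem \ref{Th:Kac-Wakimoto-vanising}(ii) and is therefore redundant.
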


 \subsection{Cohomology vanishing
of associated graded BRST complexes}
The following two assertions are easily seen from the definition.
\begin{Lem}
\begin{enumerate}
 \item  We have
 $\Ring{\Fneu}\cong \C[\fing_{1/2}^*]$
as Poisson algebras,
where
the Poisson structure of $\C[\fing_{1/2}^*]$
is given by the symplectic form
$ \fing_{1/2} \times \fing_{1/2} \ra \C,
(x,y)\mapsto (f,[x,y])$.
\item 
We have
 $\gr^F \Fneu\cong \C[(\fing_{1/2}^*)_{\infty}]$ as vertex Poisson algebras.
\end{enumerate}
\end{Lem}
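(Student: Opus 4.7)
The plan is to unpack the definition of $\Fneu$ as the free vertex (super)algebra strongly generated by odd fields $\Phi_x$ indexed by $x\in\fing_{1/2}$, with mutual OPE
\[
\Phi_x(z)\Phi_y(w)\sim\frac{(f,[x,y])}{z-w},
\]
and then to apply standard computations with the PBW basis and the Li filtration. Both assertions then reduce to bookkeeping, as the author indicates by the phrase ``easily seen from the definition.''

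For (i) I would argue as follows. A PBW-type theorem for free field vertex (super)algebras gives a basis of $\Fneu$ consisting of ordered normally-ordered monomials $\Phi_{x^{i_1},(-n_1)}\cdots \Phi_{x^{i_r},(-n_r)}\mathbf{1}$ with $n_j\geq 1$, where $\{x^i\}$ runs over a basis of $\fing_{1/2}$. One checks that the Li filtration on $\Fneu$ is the ``order filtration'' associated with this basis, so in particular $C_2(\Fneu)=F^1\Fneu$ is spanned by monomials involving at least one mode of order $\leq -2$. Hence $\Ring{\Fneu}=F^0/F^1$ has a basis of ordered monomials in $\bar\Phi_x\teigi \Phi_{x,(-1)}\mathbf{1}+C_2$. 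A direct computation using the identity $[\Phi_{x,(-1)},\Phi_{y,(-1)}]_\pm \mathbf{1}=(f,[x,y])\,T\mathbf{1}=0$ shows that the classes $\bar\Phi_x$ actually commute in $\Ring{\Fneu}$, giving an algebra isomorphism $\Sym(\fing_{1/2})\isomap \Ring{\Fneu}$, i.e.\ $\Ring{\Fneu}\cong \C[\fing_{1/2}^*]$. The Poisson bracket is determined by its values on the generators, and
\[
\{\bar\Phi_x,\bar\Phi_y\}=\overline{\Phi_{x,(0)}\Phi_{y,(-1)}\mathbf{1}}=\overline{(f,[x,y])\mathbf{1}}=(f,[x,y]),
\]
which matches the constant Poisson bracket on $\C[\fing_{1/2}^*]$ coming from the symplectic form $(x,y)\mapsto (f,[x,y])$.

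For (ii), the canonical surjection $(R_V)_\infty\twoheadrightarrow \gr^F V$ of vertex Poisson algebras exists for any vertex algebra $V$, as recalled in \S\ref{subsection:singulra-supports-and-assocaited-varites}; applied to $V=\Fneu$ it sends $\Phi_{x,(-n-1)}\in(R_{\Fneu})_\infty$ to the class of $\Phi_{x,(-n-1)}\mathbf{1}$ in $F^{n}\Fneu/F^{n+1}\Fneu$. Under the identification of the Li filtration with the order filtration from the PBW basis, the right-hand side acquires a graded basis consisting of ordered monomials in the variables $\Phi_{x,(-n)}$, $x$ running over a basis of $\fing_{1/2}$ and $n\geq 1$; this basis matches bijectively the standard monomial basis of $(R_{\Fneu})_\infty=\C[(\fing_{1/2}^*)_\infty]$, so the surjection is in fact an isomorphism of graded commutative algebras. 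Finally, since a level-$0$ vertex Poisson structure is determined by the Poisson structure on the generators, the agreement established in (i) automatically promotes the isomorphism to an isomorphism of vertex Poisson algebras.

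The only mildly delicate point — and the closest thing to a real obstacle — is verifying that the Li filtration on $\Fneu$ coincides with the order filtration associated to the PBW basis, but this is a standard instance of Li's general result \cite{Li05} on free field vertex algebras and is handled uniformly with the already-cited identification $\gr^F \Vg{k}\cong \C[\fing^*_\infty]$ in \S\ref{subsection:affine-vA}.
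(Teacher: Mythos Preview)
Your overall strategy is exactly the unpacking the paper has in mind (the paper gives no proof beyond ``easily seen from the definition \cite{KacRoaWak03}''), and the computations with the PBW basis, the Li filtration, and the Poisson bracket on generators are all the right ones.

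There is, however, one genuine slip: you declare the generating fields $\Phi_x$ to be \emph{odd}. In the setting of this paper $\fing$ is an ordinary simple Lie algebra, so $\fing_{1/2}$ is purely even, and the neutral free fields associated with it are \emph{even} (symplectic bosons / a $\beta\gamma$-type system); the name ``neutral free superfermions'' is inherited from the general Lie superalgebra framework of \cite{KacRoaWak03}. This matters: if the $\Phi_x$ were odd, then the supercommutator identity you wrote would say that the classes $\bar\Phi_x$ \emph{anticommute} in $\Ring{\Fneu}$, and you would obtain $\Lambda(\fing_{1/2})$ rather than $\Sym(\fing_{1/2})\cong\C[\fing_{1/2}^*]$, contradicting the statement of the lemma. (Compare the companion lemma immediately following, where the genuinely odd charged ghosts do produce exterior algebras.)

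Once you replace ``odd'' by ``even'' and read $[\,\cdot\,,\,\cdot\,]_\pm$ as the ordinary commutator, your argument for (i) is correct as written: $[\Phi_{x,(-1)},\Phi_{y,(-1)}]=0$ already as operators, the classes commute, and $\{\bar\Phi_x,\bar\Phi_y\}=(f,[x,y])$ gives the stated Poisson structure. Your argument for (ii) is fine and is the standard one: the canonical surjection $(R_V)_\infty\twoheadrightarrow \gr^F V$ is a bijection here because both sides share the PBW monomial basis, and the level-$0$ vertex Poisson structure is determined by the generators.
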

\begin{Lem}
\begin{enumerate}
 \item We have
$\Ring{\Lamsemi{\bullet}}\cong
\bigwedge_{op}\nolimits^{\bullet}(\finn)
\* \bigwedge\nolimits^{\bullet}(\finn^*)$
as Poisson superalgebras,
where the Poisson superalgebra  structure
on the right-hand-side
is given by
\begin{align*}
\{\psi_f,\psi_x\}=f(x),\quad 
 \{\psi_x,\psi_{x'}\}=\{\psi_f,\psi_{f'}\}=0
\end{align*}
for $x,x'\in \finn$,
$f,f'\in \finn^*$.
Here $\psi_x$ (respectively $\psi_f$)
denotes the element of $\bigwedge\nolimits^{-1}_{op}(\finn)$
(respectively $\bigwedge\nolimits^{1}(\finn^*)$)
corresponding to $x\in \finn$
(respectively to $f\in \finn^*$).
\item
The vertex Poisson algebra 
structure
of 
$
 \gr^F \Lamsemi{\bullet}
$
is the level $0$ vertex Poisson superalgebra
induced from the Poisson superalgebra structure of
$R_{\Lamsemi{\bullet}}=\bigwedge\nolimits_{op}^{\bullet}(\finn)
\* \bigwedge\nolimits^{\bullet}(\finn^*)$.
\end{enumerate} 
\end{Lem}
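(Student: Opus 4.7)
The plan is to extract both statements by direct inspection of the defining OPE of the Clifford vertex superalgebra $\Lamsemi{\bullet}$. Recall that $\Lamsemi{\bullet}$ is strongly and freely generated by the odd fields $\psi_x(z)=\sum_n \psi_{x,(n)}z^{-n-1}$ (conformal weight $1$) for $x\in\finn$ and $\psi_f(z)=\sum_n \psi_{f,(n)}z^{-n}$ (conformal weight $0$) for $f\in\finn^*$, with the only non-trivial OPE $\psi_x(z)\psi_f(w)\sim f(x)/(z-w)$. The PBW-type basis for Clifford vertex algebras gives a vector-superspace identification
\begin{equation*}
\Lamsemi{\bullet}\;\cong\;\bigwedge\nolimits^{\bullet}\!\bigl(\finn[t\inv]t\inv\bigr)\*\bigwedge\nolimits^{\bullet}\!\bigl(\finn^*[t\inv]\bigr)
\end{equation*}
via $\psi_{x,(-n-1)}\1\leftrightarrow x\* t^{-n-1}$ and $\psi_{f,(-n)}\1\leftrightarrow f\* t^{-n}$ for $n\geq 0$.

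For (i), I would argue that $F^1\Lamsemi{\bullet}=C_2(\Lamsemi{\bullet})$ is spanned, in the above basis, by monomials in which at least one mode of the form $\psi_{x,(-n-1)}$ with $n\geq 1$ or $\psi_{f,(-n)}$ with $n\geq 1$ occurs. Hence $R_{\Lamsemi{\bullet}}$ is spanned by ordered monomials in $\psi_{x,(-1)}\1$ and $\psi_{f,(0)}\1$, yielding the asserted isomorphism of $\Z/2$-graded associative algebras with $\bigwedge\nolimits^{\bullet}(\finn)\*\bigwedge\nolimits^{\bullet}(\finn^*)$. The Poisson bracket on $R_V$ is by definition induced from the $(0)$-th product on $\gr^F\Lamsemi{\bullet}$; extracting the singular parts of the OPEs one reads off $\psi_{x,(0)}\psi_{x'}=\psi_{f,(0)}\psi_{f'}=0$ and $\psi_{x,(0)}\psi_f=f(x)\1$, and the super skew-symmetry of the $(0)$-th product then forces $\psi_{f,(0)}\psi_x=f(x)\1$ as well. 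Passing to $R_V$ gives precisely the formulas stated.

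For (ii), use the canonical surjection $(R_V)_{\infty}\twoheadrightarrow\gr^F\Lamsemi{\bullet}$ of vertex Poisson superalgebras recalled in \S \ref{subsection:singulra-supports-and-assocaited-varites}. By (i) its source is $\bigwedge\nolimits^{\bullet}(\finn[t\inv]t\inv)\*\bigwedge\nolimits^{\bullet}(\finn^*[t\inv])$ equipped with the level-$0$ vertex Poisson superalgebra structure. To see that the map is an isomorphism it suffices to compare the graded characters with respect to the conformal grading $H$: both sides have character equal to $\prod_{n\geq 1}(1+q^n)^{\dim\finn}\prod_{n\geq 0}(1+q^n)^{\dim\finn^*}$, the first by the PBW basis above and the second from the exterior algebra on the jet space.

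The main obstacle, which is essentially bookkeeping, is to verify that the Li filtration of $\Lamsemi{\bullet}$ is adapted to the PBW basis, i.e.\ $F^p\Lamsemi{\bullet}$ is spanned by monomials $\psi_{x^{i_1},(-n_1-1)}\cdots\psi_{x^{i_r},(-n_r-1)}\psi_{f^{j_1},(-m_1)}\cdots\psi_{f^{j_s},(-m_s)}\1$ with $\sum n_a+\sum m_b\geq p$; this is a standard consequence of the fact that $\Lamsemi{\bullet}$ is freely strongly generated by the $\psi_x,\psi_f$, and tracking the super-signs throughout the argument is routine.
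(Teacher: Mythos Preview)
The paper does not actually prove this lemma: it simply asserts that the two statements ``are easily seen from the definition \cite{KacRoaWak03}.'' Your argument supplies precisely the verification the paper omits, and it is correct. The identification of the Li filtration with the PBW filtration for a freely strongly generated vertex (super)algebra is indeed the standard fact underlying part (i), and the character comparison in part (ii) is a clean way to upgrade the general surjection $(R_V)_{\infty}\twoheadrightarrow \gr^F V$ to an isomorphism. There is nothing to contrast with the paper's approach, since the paper gives none; your proof is exactly the sort of routine check the authors had in mind.
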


Let $M\in \KL_k$.
The Li filtration  of $C(M)$ is given by 
\begin{align*}
 F^p C(M)=\sum_{r+s+t=p}F^r M\* F^s \Fneu\* F^t \Lamsemi{\bullet}.
\end{align*}
More generally,
for a  compatible filtration $\Gamma^\bullet M$ of $M$,
\begin{align}
 \Gamma^p C(M):=\sum_{r+s+t=p}\Gamma^r M\* F^s \Fneu\* F^t
 \Lamsemi{\bullet}
\label{eq:filtration-of-complexes}
\end{align}
defines a compatible filtration of 
a $C(\Vg{k})$-module $C(M)$.
We have
\begin{align*}
 \gr^\Gamma C(M)=\gr^\Gamma M\* 
\C[(\fing_{1/2}^*)_{\infty}]\*
\Lamsemi{\bullet}.
\end{align*}
as vertex Poisson $\gr^F C(\Vg{k})$-modules.
Here we have identified 
$\gr^F\Lamsemi{\bullet}$
with $\Lamsemi{\bullet}$.

We have  $Q_{(0)}\Gamma^p C(M)\subset \Gamma^p C(M)$,
and thus,
$(\gr^\Gamma C(M),Q_{(0)})$
 is a cochain complex.
The cohomology
$H^{\bullet}(\gr^F C(\Vg{k}))$ inherits  the graded vertex Poisson (super) algebra
structure from $\gr^F C(\Vg{k})$,
and
the vertex Poisson $\gr^F C(\Vg{k})$-module structure 
on $\gr^\Gamma C(M)$
induces the
vertex Poisson $H^{\bullet}(\gr^F C(\Vg{k}))$-module structure on
$H^{\bullet}(\gr^F C(M))$.

\begin{Th}\label{Th:BRST-vanishing-for-assicaited-graded}
\begin{enumerate}
 \item 
We have
$H^i(\gr^F C(\Vg{k}))=0$ for all $i\ne 0$,
and
\begin{align*}
	H^0(\gr^F C(\Vg{k}))
\isomap \C[\Sl_{\infty}]
     \end{align*}
as vertex Poisson algebras.
\item  Let   $M\in \KL_k$,
$\Gamma^\bullet M$ a  compatible filtration of $M$.
Then
\begin{align*}
	H^i(\gr^\Gamma C(M))\cong 
\begin{cases}
	(\gr^\Gamma M/I_{\infty}\gr^\Gamma M)^{\finn[t]}
&\text{for }i=0,\\0&\text{for }i\ne 0
       \end{cases}
       \end{align*}
as modules over $H^0(\gr^F C(\Vg{k}))=\C[\Sl_{\infty}]$.
\end{enumerate}
\end{Th}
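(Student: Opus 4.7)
The plan is to view $\gr^\Gamma C(M)$ as a double complex and reduce the computation to the two vanishing results already proved, Propositions~\ref{Pro:vanishing-of-homology} and~\ref{Pro:cohomology-vanishing-gr}. First I would make the classical BRST differential $\bar d$, induced by $Q_{(0)}$ on $\gr^\Gamma C(M)$, explicit via the formulas of~\cite{KacRoaWak03}. Splitting the fermions according to $\finn=\finm\+\fing_{1/2}$ and absorbing the neutral fermions $\C[(\fing_{1/2}^*)_\infty]$ with the $\fing_{1/2}$-Clifford factor, one expects (following the classical Drinfeld-Sokolov description) a decomposition $\bar d=d_K+d_{CE}$ modulo an acyclic summand, where $d_K$ is the Koszul differential for the sequence $\{y-\chi_{\infty}(y)\mid y\in\finm[t\inv]t\inv\}$ acting on the $\finm$-ghost factor tensored with $\gr^\Gamma M$, and $d_{CE}$ is the classical Chevalley-Eilenberg differential for $\finn[t]$-cohomology encoded in $\bw{\bullet}(\finn^*[t\inv])$, where the pairing with $\finn[t]$ is by residues.

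Second, I would grade $\gr^\Gamma C(M)$ by the ghost numbers associated to $d_K$ and $d_{CE}$ respectively, and run the spectral sequence of the bicomplex. By Proposition~\ref{Pro:vanishing-of-homology}, the $E_1$-page is concentrated in $d_K$-degree zero and equals $\gr^\Gamma M/I_\infty\gr^\Gamma M$, regarded as a $\finn[t]$-module (since $d_K$ is manifestly $\finn[t]$-equivariant). The differential on the $E_1$-page is then the classical Chevalley-Eilenberg differential $d_{CE}$, so Proposition~\ref{Pro:cohomology-vanishing-gr} collapses the $E_2$-page onto a single spot, giving $(\gr^\Gamma M/I_\infty\gr^\Gamma M)^{\finn[t]}$ in total cohomological degree zero and zero elsewhere. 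This proves part~(ii).

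For part~(i), specializing to $M=\Vg{k}$ yields $\gr^F\Vg{k}=\C[\fing^*_\infty]$, and combining part~(ii) with the identification~\eqref{eq:jets-of-slodowy-vs} gives the commutative algebra isomorphism $H^0(\gr^F C(\Vg{k}))\cong\C[\Sl_\infty]$. The vertex Poisson algebra structure on the left-hand side is inherited from the one on $\gr^F C(\Vg{k})$, and it matches the level-zero vertex Poisson structure on $\C[\Sl_\infty]$ because both arise from the classical Hamiltonian reduction of $\C[\fing^*_\infty]$ with respect to the moment map to $\finm^*_\infty$ at $\chi_\infty$.

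The principal obstacle is the first step: pinning down the classical limit of $Q_{(0)}$, and in particular verifying that the neutral fermion and $\fing_{1/2}$-Clifford factors split off as a contractible subcomplex so that the residual differential decomposes as $d_K+d_{CE}$ with the desired bidegrees. This is a direct but delicate calculation that must be performed by tracking the leading Li-filtration symbols of the individual summands of $Q$; once it is done, the remainder of the argument is a formal double-complex collapse that requires no new input beyond Propositions~\ref{Pro:vanishing-of-homology} and~\ref{Pro:cohomology-vanishing-gr}.
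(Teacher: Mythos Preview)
Your approach is essentially the paper's own: the paper decomposes the induced differential on $\gr^\Gamma C(M)$ as $\bar Q^-+\bar Q^+$ (your $d_K+d_{CE}$, with the neutral-fermion/$\fing_{1/2}$ Koszul piece absorbed into $\bar Q^-$ as a tensor factor rather than split off as a summand), runs the spectral sequence with $\bar Q^-$ first, and collapses at $E_2$ using exactly Propositions~\ref{Pro:vanishing-of-homology} and~\ref{Pro:cohomology-vanishing-gr}. The paper also briefly justifies convergence (each graded piece $\Gamma^p C(M)/\Gamma^{p+1}C(M)$ is bounded in the $j$-direction), which you should note.

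The one place where your outline is thinner than the paper is the vertex Poisson assertion in~(i). Saying ``both arise from classical Hamiltonian reduction'' is not yet a proof that the two vertex Poisson structures coincide. The paper's argument is: the spectral sequence degenerates with trivial induced filtration, so $\Phi$ is an isomorphism of differential algebras; its restriction to the degree-zero piece $H^0(R_{C(\Vg{k})})$ is the classical BRST Hamiltonian reduction of~\cite{KosSte87,GanGin02}, hence a Poisson isomorphism onto $\C[\Sl]$; and since $\C[\Sl_\infty]$ is generated as a differential algebra by $\C[\Sl]$, the level-zero vertex Poisson structure is \emph{uniquely determined} by that Poisson structure on generators (\cite[Proposition~2.3.1]{Ara09a}), forcing $\Phi$ to be a vertex Poisson isomorphism. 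You should incorporate this uniqueness step explicitly.
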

\begin{proof}
Let $M\in \KL_k$.
Set 
\begin{align*}
\bar C=\gr^\Gamma  C(M)
=\bigoplus_{i\leq 0,j\geq 0}\bar C_{i,j},
\end{align*}
where 
\begin{align*}
\bar{C}_{i,j}=\gr^\Gamma  M\* 
\C[(\fing_{1/2}^*)_{\infty}]\*
\bigwedge\nolimits^{-i}(\finn[t\inv]t\inv)
\* \bigwedge\nolimits^{j}(\finn^*[t\inv]).
\end{align*}
We see that
 the operator $Q_{(0)}$ on $\bar C$
decomposes as 
 \begin{align*}
 Q_{(0)}=\bar{Q}^- +\bar{Q}^+,
\end{align*}
where
$\bar {Q}^- \bar C_{i.j}\subset \bar C_{i+1,j}$
and 
$\bar {Q}^+ \bar C_{i.j}\subset \bar C_{i,j+1}$.
Because $Q_{(0)}^2=0$,
we get that
\begin{align*}
(\bar Q^-)^2=(\bar Q^+)^2=\{\bar Q^-,\bar Q^+\}=0.
\end{align*}

Consider 
the
 spectral sequence 
\begin{align}
E_r\Rightarrow H^{\bullet}(\bar C)
\label{eq:spectral-sequence1}
\end{align}
whose zeroth differential is $\bar Q^-$
and first differential is $\bar Q^+$.
This is a converging spectral sequence because
the complex $\bar C$ is a direct sum of subcomplexes
$\Gamma ^p C(M)/\Gamma^{p+1}C(M)$ 
with $p\in \Z$, 
and 
$(\Gamma^p C(M)/\Gamma^{p+1}C(M))\cap \bar C^{i,j}=0$
for $j\gg 0$ by (\ref{eq:filtration-of-complexes}).

By definition the $E_1$-term is the cohomology of the  complex 
$(\bar C,\bar Q^-)$,
which 
 is identical to  the Koszul complex
$\gr^{\Gamma}M\* \C[(\fing_{1/2})_{\infty}^*]$
associated with the
the sequence
$t_1,t_2,\dots, s_1,s_2,\dots$,
where $t_i$ is defined in
\S \ref{subsection
 Homology vanishing}
and $\{s_1,s_2,\dots\}$ is a basis of
 $\fing_{1/2}[t\inv]t\inv$.
To compute this we consider the
Hochschild-Serre spectral sequence
associated with the subsequence 
$t_1,t_2,\dots $.
Let $\dot{E}_r^{p,q}$ denote this spectral sequence.
Proposition \ref{Pro:vanishing-of-homology}
gives that
\begin{align}
 \dot{E}_1^{p,q}=\begin{cases}
		       (\gr^\Gamma M/I_{\infty}\gr^\Gamma M)\*
\C[(\fing_{1/2}^*)_{\infty}]\* \Lam^{\bullet}(\finn^*[t\inv])\*
\Lam^{-p}(\fing_{1/2}[t\inv]t\inv)
&
\text{if }q=0\\
0&\text{if }q\ne 0.
		      \end{cases}
\label{eq:E1-Poisson1}
\end{align}
 Because
$\Lam^{-p}((\fing_{1/2}[t\inv]t\inv)$
is a free $\fing_{1/2}[t\inv]t\inv$-module
we obtain
\begin{align}
 \dot{E}_2^{p,q}\cong \begin{cases}
		       (\gr^\Gamma M/I_{\infty}\gr^\Gamma M)
\* \Lam^{\bullet}(\finn^*[t\inv])
&
\text{if }p=q=0\\
0&\text{otherwise}.
		      \end{cases}
\label{eq:E1-Poisson2}
\end{align}
Therefore
 the spectral sequence
$\dot{E}_r$ collapses at $\dot{E}_2=\dot{E}_{\infty}$
and we get
\begin{align}
 {E}_1^{p,q}\cong \begin{cases}
		       (\gr^\Gamma M/I_{\infty}\gr^\Gamma M)
\* \Lam^{p}(\finn^*[t\inv])
&
\text{if }q=0\\
0&\text{if }q\ne 0.
		      \end{cases}
\label{eq:E1-Poisson}
\end{align}

Next let us
 compute the $E_2$-term
of the spectral sequence \eqref{eq:spectral-sequence1}.
We find 
 from the explicit form
of $\bar{Q}^-$  and (\ref{eq:E1-Poisson})
that
the complex 
$(H^0(\bar{C},\bar{Q}_-),\bar{Q}_+)$
is identical to the the Chevalley complex 
for computing Lie algebra cohomology
$H^{\bullet}_{\on{Lie}}(\finn[t],\gr^\Gamma M/I_{\infty}
\gr^\Gamma M)
$ considered in \S \ref{subsection;A cohomology vanishing}.
Therefore
Proposition  \ref{Pro:cohomology-vanishing-gr}
gives that
\begin{align*}
 H^i(H^j(\bar C,\bar{Q}^-),\bar Q^+)\cong \begin{cases}
				      (\gr^\Gamma /I_{\infty}\gr^\Gamma
				      M)^{\finn[t]}&\text{for
				      $i=j=0$},\\
0&\text{otherwise.}
				     \end{cases}
\end{align*}
Hence the spectral sequence collapses at $E_2=E_{\infty}$.
Therefore
\begin{align}
 H^0(\gr^{\Gamma}C(M))
\cong \begin{cases}
				      (\gr^\Gamma /I_{\infty}\gr^\Gamma
				      M)^{\finn[t]}&\text{for
				      $i=j=0$},\\
0&\text{otherwise.}
				     \end{cases}
\label{eq:conseq-of-double-complex}
\end{align}
Note that
 $E_{\infty}^{p,q}$ lies entirely in $E_2^{0,0}$,
and therefore
the corresponding filtration of $H^0(\gr^{\Gamma} C(M))$
is trivial.

Applying \eqref{eq:conseq-of-double-complex} to $M=\Vg{k}$
and $\Gamma=F$,
we obtain
the isomorphism
\begin{align}
\Phi: 
H^0(\gr^F C(\Vg{k}))
\isomap \C[\Sl_{\infty}].
\label{eq:Phi}
\end{align}
The triviality 
of the filtration associated with the spectral sequence considered above
implies that $\Phi$ is an isomorphism of 
differential algebras.

To see that $\Phi$ is an isomorphism of vertex Poisson
 algebras,
consider the subcomplex 
$\Ring{C(\Vg{k})}=F^0 C(\Vg{k})/F^1 C(\Vg{k})
\subset \gr^F C(\Vg{k})$.
We have
\begin{align*}
\Ring{C(\Vg{k})}
= \Ring{\Vg{k}}\* \Ring{\Fneu}\*\Ring{\Lamsemi{\bullet}}
\end{align*}
as Poisson (super)algebras,
and
the vertex Poisson algebra structure of
$H^{0}(\gr^\Gamma C(M))$ restricts to the Poisson algebra
structure of
$H^0(\Ring{C(\Vg{k})})$.
Now
observe that the assignment
 $\Ring{C(\Vg{k})}\ra H^0(\Ring{\Vg{k}})$
is exactly the BRST realization 
\cite{KosSte87} of 
Hamiltonian reduction 
described in  \cite[\S 3.2]{GanGin02}.
Hence
the restriction of 
$\Phi$ 
gives the isomorphism
\begin{align*}
\Phi|_{H^0(\Ring{C(\Vg{k})})}: 
H^0(R_{C(\Vg{k})})\isomap \C[\Sl]=(\C[\fing^*]/I_0)^{\finn}
\end{align*}
of Poisson algebras.
Since $\C[\Sl_{\infty}]$ is generated by its subring  $\C[\Sl]$
as a differential algebra,
the ring $H^0(\gr^F C(\Vg{k}))$
is also  generated by 
$H^0(\Ring{C(\Vg{k})})$ as a differential algebra.
By the property of the Li filtration
we have
$a_{(n)}b=0$ for $n>0$, $a,b\in H^0(\Ring{C(\Vg{k})})$.
This means that
the vertex Poisson algebra structure of
$H^0(\gr^F C(\Vg{k}))$ coincides with that of
$\C[\Sl_{\infty}]$ on the
generating subspaces
$H^0(\Ring{C(\Vg{k})})\cong \C[\Sl]$.
But
we have shown in 
\cite[Proposition 2.3.1]{Ara12} that
this uniquely determines 
the whole vertex Poison algebra structure.
This completes the proof.
\end{proof}

It is clear from the proof that
the isomorphism of Theorem \ref{Th:BRST-vanishing-for-assicaited-graded} (i)
restricts to the isomorphism
\begin{align*}
H^0(R_{C(\Vg{k})})\iso \C[\mc{S}]
\end{align*}
of Poisson algebras.
Also,
 in the case that $\Gamma=F$,
the isomorphism of Theorem \ref{Th:BRST-vanishing-for-assicaited-graded}
(ii)
restricts to the
isomorphism
\begin{align}
H^0(C(M)/F^1 C(M))\cong (\bar M/I\bar M)^{\finn}
\end{align}
as Poisson $\C[\Sl]$-modules.
Hence Proposition \ref{Pro:generating-subspace-gr} (ii)
gives the following.

 \begin{Pro}\label{Pro:generating-subspace}
Let $M$ be a finitely generated object in $\KL_k$.
Then
$H^0(\gr^F C(M))$ is generated by
the subspace
$H^0( C(M)/F^1 C(M))\cong (\bar M/I\bar M)^{\finn}
$ over  $\C[\mathcal{S}_{\infty}]$.
 \end{Pro}

Since
$\Hnew$ naturally acts on
$H^0(\gr^{\Gamma}C(M))$,
we have 
 the decomposition
\begin{align*}
H^0(\gr^{\Gamma}C(M))
=\bigoplus_{d\in \C}H^0(\gr^{\Gamma}C(M))_d.
\end{align*}
Here and below,
\begin{align*}
 M_d=\{m\in M; H_{\new} m=d m\}
\end{align*}
for any $H_{\new}$-module $M$.
\begin{Pro}\label{Pro:finite-dimensionality-of-associated-graded}
\begin{enumerate}
\item
For an object  $M$ of $\KL_k$
and $d\in \C$,
the dimension of
$H^0(\gr^{\Gamma}C(M))_d$
is independent of  a compatible
     filtration
$\Gamma^{\bullet}M$ of $M$.
\item Let $M$ be a finitely generated object of $\KL_k$,
$\Gamma^\bullet M$ a compatible filtration.
Then $H^0(\gr^{\Gamma }C(M))_d$ is finite-dimensional for all $d\in \C$.
\end{enumerate}
\end{Pro}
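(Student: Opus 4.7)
The plan is to combine the spectral sequence of the filtered complex $\{\Gamma^p C(M)\}$ with the commuting $(H, H_{\new})$-bigrading on $C(M)$ in order to identify $H^0(\gr^\Gamma C(M))$, bidegree-by-bidegree, with the associated graded of $\BRS{0}{M}$. Once this is established, the already known finite-dimensionality of $\BRS{0}{M}_d$ transfers directly to $H^0(\gr^\Gamma C(M))_d$.

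Both $H=-D$ and $H_{\new}=H-\x_{(0)}$ commute with $Q_{(0)}$ and preserve $\Gamma^\bullet C(M)$. The stability under $\x_{(0)}$ uses the compatibility condition applied to $a=\x \in F^0 \Vg{k}$ and $n=0$, which gives $\x_{(0)}\Gamma^p M\subset\Gamma^p M$, combined with the stability of the Li filtrations on $\Fneu$ and $\Lamsemi{\bullet}$ under the diagonal $\x_{(0)}$-action. Consequently $C(M)$ decomposes as a direct sum of $(H, H_{\new})$-bihomogeneous subcomplexes, each carrying the induced filtration. The crucial finiteness input is that, for each fixed $(n, d_H, d_{\new})$, the space $C(M)^n_{d_H, d_{\new}}$ is finite-dimensional and $\Gamma^p C(M)^n_{d_H, d_{\new}}=0$ for $p$ sufficiently large. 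This follows because $\Fneu$ and $\Lamsemi{\bullet}$ are finitely strongly generated vertex (super)algebras with bounded-below $H$-grading and finite-dimensional $H$-weight spaces, and because the compatible filtration on $M$ (by definition) and the Li filtrations on the ghost factors all vanish eventually in any fixed $H$-weight.

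For each $(d_H, d_{\new})$ we thus obtain a bounded, convergent spectral sequence
\[
E_1^{p,q}=H^{p+q}(\gr^\Gamma_p C(M))_{d_H,d_{\new}}\Rightarrow \BRS{p+q}{M}_{d_H,d_{\new}}.
\]
By Theorem \ref{Th:BRST-vanishing-for-assicaited-graded}(ii), $H^i(\gr^\Gamma C(M))=0$ for $i\neq 0$, which forces $E_1^{p,q}=0$ whenever $p+q\neq 0$; all differentials $d_r$ then map off the antidiagonal $p+q=0$ and vanish, so $E_1=E_\infty$, and we obtain $\dim H^0(\gr^\Gamma C(M))_{d_H, d_{\new}}=\dim \BRS{0}{M}_{d_H, d_{\new}}$. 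Summing over $d_H$ and invoking Theorem \ref{Th:Kac-Wakimoto-vanising}(ii) in case (i), respectively Corollary \ref{Co:finite-dimensionality} in case (ii), the space $\BRS{0}{M}_{d_{\new}}$ is finite-dimensional, so only finitely many $d_H$ contribute nontrivially, each contribution is finite-dimensional, and $\dim H^0(\gr^\Gamma C(M))_{d_{\new}}=\dim \BRS{0}{M}_{d_{\new}}<\infty$.

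The principal technical obstacle is establishing convergence of the spectral sequence when the individual $H_{\new}$-weight spaces of $C(M)$ may themselves be infinite-dimensional; refining via the auxiliary conformal bigrading by $H$ is the device that reduces matters to finite-dimensional pieces with terminating filtrations.
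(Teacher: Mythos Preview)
Your argument has a genuine gap at its very first step: the operator $H=-D$ does \emph{not} commute with $Q_{(0)}$. The BRST element $Q$ contains, besides the standard piece $Q_{st}$ (which is $H$-homogeneous of weight $1$), the character-twist piece $Q_\chi=\sum_{\alpha\in\Delta_1}(f\mid e_\alpha)\,\psi^*_\alpha$, which is $H$-homogeneous of weight $0$. Thus $Q$ is not $H$-homogeneous, and $[H,Q_{(0)}]\neq 0$. This is precisely why $H_{\new}=H-\x_{(0)}$ is introduced: the twist term has $\x_{(0)}$-weight $-1$, so both pieces of $Q$ acquire the common $H_{\new}$-weight $1$. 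Consequently the $(H,H_{\new})$-bihomogeneous pieces of $C(M)$ are \emph{not} subcomplexes, and your spectral sequence ``in each bidegree $(d_H,d_{\new})$'' is not well-defined. The same failure occurs on $\gr^\Gamma C(M)$: the Koszul part of the induced differential involves multiplication by $t_i=y_i-\chi(y_i)$ with $y_i=(e_\alpha)_{(-1)}$ of $H$-degree $1$ and $\chi(y_i)$ of $H$-degree $0$.

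The paper's proof avoids this entirely. For (i) it applies the Euler--Poincar\'e principle directly in each $H_{\new}$-degree: both $H^\bullet(C(M))$ and $H^\bullet(\gr^\Gamma C(M))$ vanish outside degree $0$ (Theorems~\ref{Th:Kac-Wakimoto-vanising} and~\ref{Th:BRST-vanishing-for-assicaited-graded}), and the two complexes share the same graded Euler characteristic, so $\dim H^0(\gr^\Gamma C(M))_d=\dim\BRS{0}{M}_d$. For (ii) the paper does \emph{not} attempt a spectral-sequence argument for general finitely generated $M$; instead it picks a surjection $\phi\colon P\twoheadrightarrow M$ with $P\in\KL_k^\Delta$, pulls the filtration back to $P$, and uses the exactness in Theorem~\ref{Th:BRST-vanishing-for-assicaited-graded} to obtain a surjection $H^0(\gr^\Gamma C(P))\twoheadrightarrow H^0(\gr^\Gamma C(M))$, reducing to (i). Note that your route for (ii), had it worked, would already prove $\BRS{i}{M}=0$ for all $i\neq 0$ and all $M\in\KL_k$; establishing convergence of that spectral sequence is exactly the delicate point the paper addresses afterwards (Propositions~\ref{Pro:vanishing-intermideate-1}--\ref{Pro;i=0,1} and Theorem~\ref{Th:vanishing-new}), and the present proposition is one of the inputs to that argument.
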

\begin{proof}
(i) 
follows from 
 the Euler-Poincar\'{e} principle
and 
Theorem  \ref{Th:BRST-vanishing-for-assicaited-graded}.
(ii)
By (i),
we may assume that $\Gamma=F$, the Li filtration.
But then the assertion follows from
Proposition \ref{Pro:finite-dimensionality-of-associated-graded}.
\end{proof}

\subsection{Strong   vanishing of BRST cohomology}
\label{subsection:strong-vanishing}
Let $M$ be a finitely generated object of 
$\KL_k$,
$\Gamma^\bullet M$ a compatible filtration.
 Let $M\in \KL_k$,
$\Gamma^\bullet M$ a  compatible filtration.
We have
$C(M)=\bigoplus_{d\in \C} C(M)_d$,
and  
$\Gamma^iC(M)_d=\Gamma^i C(M)\cap C(M)_d$
 gives a filtration of the subcomplex
$C(M)_d$.
We wish to show that,
for each $d\in \C$,
$\Gamma^\bullet C(M)_d$ 
is a regular filtration 
 in the sense 
of \cite[p.324]{CarEil56},
that is,
$H^{\bullet}(\Gamma^p C(M)_d)=0$ for a sufficiently large $p$.

Let 
 $\Gamma^\bullet \BRS{\bullet}{M}
=\bigoplus_d \Gamma^\bullet \BRS{\bullet}{M}_d$
be the filtration 
 of $\BRS{\bullet}{M}$
induced by
$\Gamma^\bullet C(M)_d$,
i.e., 
\begin{align*}
\Gamma^p \BRS{\bullet}{M}_d
=\im (H^{\bullet}(\Gamma^p C(M))_d
\ra H^{\bullet}(C(M))_d).
\end{align*}

The filtration $\Gamma^\bullet C(M)$ induces the filtration
$\Gamma^{\bullet}(C(M)/\Gamma^p C(M))$
of 
$C(M)/\Gamma^p C(M)$, which 
is certainly regular.
Let $\Gamma^p H^{\bullet}(C(M)/\Gamma^p C(M))$
 be the induced filtration
of $ H^{\bullet}(C(M)/\Gamma^p C(M))$.
The natural map
$C(M)\ra C(M)/\Gamma^p C(M)$
induces the surjection
\begin{align}
\gr^\Gamma C(M)\ra \gr^\Gamma (C(M)/\Gamma^p C(M)).
\label{eq:gr-gamma-quotient}
\end{align}

\begin{Pro}\label{Pro:vanishing-intermideate-1}
 Let $M\in \KL_k$,
$\Gamma^\bullet M$ a  compatible filtration.

\begin{enumerate}
 \item We have $H^i(C(M)/\Gamma^p C(M))=0$ for all $i\ne 0$
and $p\geq 1$,
and we have
$\gr^\Gamma H^0(C(M)/\Gamma^p C(M))\cong H^0(\gr^\Gamma (C(M)/\Gamma^p C(M)))$.
\item 
Let $d\in \C$
and
suppose that $H^{0}(\gr^{\Gamma}C(M))_d$ is finite-dimensional.
Then \eqref{eq:gr-gamma-quotient}
induced the isomorphism
\begin{align*}
H^0(\gr^\Gamma C(M))_d
\isomap
\gr^{\Gamma}H^0(C(M)/\Gamma^p C(M))_d
\end{align*}
for a sufficiently large $p$.
In particular
\begin{align*}
 \dim H^0 C(M)/\Gamma^p C(M))_d
=\dim H^0(\gr^\Gamma C(M))_d
\end{align*}
for a sufficiently large $p$.
\end{enumerate}
\end{Pro}
\begin{proof}
Set $C=C(M)$.
Because the complex
$\gr^\Gamma (C/\Gamma^p C)$ is a direct summand of
$\gr^\Gamma C$,
Theorem \ref{Th:BRST-vanishing-for-assicaited-graded}
gives  that
$H^i(\gr^\Gamma(C/\Gamma^p C))=0$
for $i\ne 0$.
It follows that
the  spectral sequence
associated with the filtration 
$\Gamma^{\bullet}(C/\Gamma^p C)$
collapses at $E_1=E_{\infty}$,
and we get that
\begin{align}
 &H^i(C/\Gamma^p C)=0\quad \text{for $i\ne 0$},
\label{eq:vanishing-gr-intermidiate1}
\\
& \gr^\Gamma H^0(C/\Gamma^p C)=H^0(\gr^\Gamma (C/\Gamma^p C))
=\bigoplus_{i=0}^{p-1}H^0(\Gamma^iC/\Gamma^{i+1}C).
\label{eq:vanishing-gr-intermidiate2}
\end{align}
This completes the proof.
\end{proof}

\begin{Pro}\label{Pro:regular?}
 Let $M\in \KL_k$,
$\Gamma^p M$ a  compatible filtration.
Then 
$\BRS{i}{M}=\Gamma^p \BRS{i}{M}$
for all $i\ne 0$ and $p\geq 1$.
\end{Pro}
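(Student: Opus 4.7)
The plan is to derive the statement from the short exact sequence of complexes
\[
0 \to \Gamma^p C(M) \to C(M) \to C(M)/\Gamma^p C(M) \to 0
\]
(which lives in each $\Hnew$-eigenspace separately, since all three filtration functors and the differential $Q_{(0)}$ commute with $\Hnew$) together with the vanishing already established in Proposition \ref{Pro:vanishing-intermideate-1}(i). So fix $d\in\C$ and restrict the short exact sequence to its degree-$d$ component; this produces a long exact sequence
\[
\cdots \to H^{i-1}\bigl(C(M)/\Gamma^p C(M)\bigr)_d \to H^i\bigl(\Gamma^p C(M)\bigr)_d \to \BRS{i}{M}_d \to H^i\bigl(C(M)/\Gamma^p C(M)\bigr)_d \to \cdots
\]

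For $i>0$ there is nothing to prove: Theorem \ref{Th:right-exactness} gives $\BRS{i}{M}=0$, so both sides of the desired equality are zero. For $i<0$, both $i$ and $i-1$ are nonzero, so Proposition \ref{Pro:vanishing-intermideate-1}(i) forces $H^i(C(M)/\Gamma^p C(M))_d=0$ and $H^{i-1}(C(M)/\Gamma^p C(M))_d=0$. The long exact sequence then collapses to an isomorphism $H^i(\Gamma^p C(M))_d \isomap \BRS{i}{M}_d$, and in particular the induced map $H^i(\Gamma^p C(M))_d \to \BRS{i}{M}_d$ is surjective. By the very definition of $\Gamma^p \BRS{i}{M}$ as the image of $H^i(\Gamma^p C(M)) \to \BRS{i}{M}$, this surjectivity is exactly the assertion $\Gamma^p \BRS{i}{M}_d = \BRS{i}{M}_d$.

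Since Proposition \ref{Pro:vanishing-intermideate-1}(i) is already in hand, there is no real obstacle here; the entire argument is a routine invocation of the cohomology long exact sequence, done one $\Hnew$-weight at a time so that no finite-generation or finite-dimensionality hypothesis on $M$ is needed.
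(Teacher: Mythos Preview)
Your argument is correct and rests on the same short exact sequence and the same vanishing input (Proposition~\ref{Pro:vanishing-intermideate-1}(i)) as the paper. The paper's version is marginally more streamlined: it treats all $i\ne 0$ uniformly by noting that exactness gives an injection $\BRS{i}{M}/\Gamma^p\BRS{i}{M}\hookrightarrow H^i(C(M)/\Gamma^p C(M))$, whose target vanishes---so no case split on the sign of $i$ and no separate appeal to Theorem~\ref{Th:right-exactness} is needed.
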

\begin{proof}
We have the injection
\begin{align}
 \BRS{i}{M}/\Gamma^p \BRS{i}{M}
\hookrightarrow  H^{i}(C(M)/\Gamma^p C(M)).
\label{eq:basis-injection}
\end{align}But 
$H^{i}(C(M)/\Gamma^p C(M))=0$ for all $i\ne 0$
by Proposition  \ref{Pro:vanishing-intermideate-1}.
\end{proof}

\begin{Pro}\label{Pro:strainge-way-of-argument}
 Let $M$ be a finitely generated object of $\KL_k$.
Then
the following conditions are equivalent.
\begin{enumerate}
 \item $\BRS{i}{M}=0$ for all $i\ne 0$.
\item For any compatible filtration
$\Gamma^\bullet M$,
we have
$\Gamma^p \BRS{i}{M}=0$ for $i\ne 0$,
$p\geq 0$ and 
$\Gamma^p \BRS{0}{M}_d=0$ for $p\gg 0$, $d\in \C$.
\item
For any compatible filtration
$\Gamma^\bullet M$,
we have
$H^i(\Gamma^p C(M))=0$ for $i\ne 0$,
$p\geq 0$ and 
$H^0(\Gamma^p C(M))_d=0$ for $p\gg 0$, $d\in \C$.
\item There exists a  compatible filtration
$\Gamma^\bullet M$ such that
$H^{{\bullet}}(\Gamma^p C(M))_d=0$ for  $p\gg 0$,
 $d\in \C$.
\end{enumerate}
\end{Pro}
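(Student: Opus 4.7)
My plan is to establish the four-way equivalence via the cycle $(\mathrm{iii})\Rightarrow(\mathrm{ii})\Rightarrow(\mathrm{i})\Rightarrow(\mathrm{iii})$, together with the trivial $(\mathrm{iii})\Rightarrow(\mathrm{iv})$ and a short argument for $(\mathrm{iv})\Rightarrow(\mathrm{i})$.

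The formal directions come first. $(\mathrm{iii})\Rightarrow(\mathrm{iv})$ is tautological. Since $\Gamma^p\BRS{i}{M}$ is by definition $\im(H^i(\Gamma^p C(M))\to\BRS{i}{M})$, vanishing of the source forces vanishing of the image, giving $(\mathrm{iii})\Rightarrow(\mathrm{ii})$. For $(\mathrm{ii})\Rightarrow(\mathrm{i})$ take $p=0$: since $\Gamma^0 C(M)=C(M)$ we have $\Gamma^0\BRS{i}{M}=\BRS{i}{M}$. For $(\mathrm{iv})\Rightarrow(\mathrm{i})$, fix $d\in\C$, pick $p$ with $H^\bullet(\Gamma^p C(M))_d=0$, and invoke the long exact sequence of $0\to\Gamma^p C(M)\to C(M)\to C(M)/\Gamma^p C(M)\to 0$ together with Proposition \ref{Pro:vanishing-intermideate-1}(i).

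The main implication is $(\mathrm{i})\Rightarrow(\mathrm{iii})$. Feeding $H^i(C(M))=0$ for $i\ne 0$ and Proposition \ref{Pro:vanishing-intermideate-1}(i) into the same long exact sequence gives $H^i(\Gamma^p C(M))=0$ for $i\notin\{0,1\}$ together with the four-term sequence
\begin{align*}
0\to H^0(\Gamma^p C(M))\to\BRS{0}{M}\to H^0(C(M)/\Gamma^p C(M))\to H^1(\Gamma^p C(M))\to 0,
\end{align*}
which identifies $H^0(\Gamma^p C(M))\cong\Gamma^p\BRS{0}{M}$. It remains to prove $H^1(\Gamma^p C(M))_d=0$ for every $p\ge 1$ and $\Gamma^p\BRS{0}{M}_d=0$ for $p\gg 0$. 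Both follow from the spectral sequence of $\{\Gamma^p C(M)\}$ restricted to the weight-$d$ subcomplex: by Theorem \ref{Th:BRST-vanishing-for-assicaited-graded} the $E_1$-page is supported on the anti-diagonal $p+q=0$, forcing $E_1=E_\infty$; granted strong convergence to $H^\bullet(C(M))_d$, (i) combined with finite-dimensionality (Corollary \ref{Co:finite-dimensionality} and Proposition \ref{Pro:finite-dimensionality-of-associated-graded}) yields $\gr^q_\Gamma\BRS{0}{M}_d\cong H^0(\gr^q_\Gamma C(M))_d$. Telescoping gives $\dim\BRS{0}{M}_d-\dim\Gamma^p\BRS{0}{M}_d=\sum_{q=0}^{p-1}\dim H^0(\gr^q_\Gamma C(M))_d$, and the refinement of Proposition \ref{Pro:vanishing-intermideate-1}(ii) implicit in its proof identifies the right-hand side with $\dim H^0(C(M)/\Gamma^p C(M))_d$. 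Substituting into the four-term sequence forces $H^1(\Gamma^p C(M))_d=0$ for every $p\ge 1$, while the tail $\dim\Gamma^p\BRS{0}{M}_d=\sum_{q\ge p}\dim H^0(\gr^q_\Gamma C(M))_d$ vanishes for $p\gg 0$ by finiteness of $\dim\BRS{0}{M}_d$.

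The main obstacle is justifying strong convergence of this spectral sequence. Because $C(M)$ is unbounded in cohomological degree and the algebras involved are non-Noetherian, standard convergence theorems do not directly apply; the essential technical point is verifying that $\Gamma^p C(M)_d=0$ for $p\gg 0$ in each fixed weight $d$, which ultimately rests on the lower-boundedness of the $\Hnew$-grading on each tensor factor of $C(M)$, notably on the semi-infinite forms $\Lamsemi{\bullet}$.
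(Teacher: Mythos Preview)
Your cycle of easy implications is fine, and your setup for (i)$\Rightarrow$(iii) via the four-term sequence is the right framework. The gap is exactly where you flag it: strong convergence of the spectral sequence for $\{\Gamma^p C(M)_d\}$ at a fixed $\Hnew$-weight $d$. Your proposed resolution, namely $\Gamma^p C(M)_d=0$ for $p\gg 0$ via lower-boundedness of the $\Hnew$-grading on the tensor factors, is incorrect.

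You single out $\Lamsemi{\bullet}$ as the delicate factor, but in fact both $\Lamsemi{\bullet}$ and $\Fneu$ \emph{are} bounded below for $\Hnew$: only finitely many of their generators have negative $\Hnew$-degree, and those are fermionic. The genuine obstruction is $M$ itself. Take $M=\Weyl{\lam}$ with the Li filtration and any $f$ with $\theta(\x)>1$ (so anything beyond a minimal nilpotent). The vectors $(e_{\theta})_{(-2)}^{\,a}(f_{\theta})_{(-1)}^{\,b}v_\lam$ lie in $F^a\Weyl{\lam}$ and have $\Hnew$-degree $a(2-\theta(\x))+b(1+\theta(\x))-\lam(D+\x)$; choosing $(a,b)$ along the appropriate line keeps the $\Hnew$-degree fixed while $a\to\infty$. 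Hence $F^p(\Weyl{\lam})_d\ne 0$ for all $p$, the filtration on $C(M)_d$ is not regular, and the spectral sequence does not converge in the naive sense. This is precisely the difficulty the paper flags in the introduction (``The difficult part is the proof of the convergency \dots because our algebras are not Noetherian'').

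The paper therefore never tries to prove regularity. For (i)$\Rightarrow$(ii) it instead uses the Euler--Poincar\'e principle (applied at the level of formal $\affh$-characters, where each weight component \emph{is} finite-dimensional), feeding in both hypothesis (i) and Theorem~\ref{Th:BRST-vanishing-for-assicaited-graded}, to obtain directly
\[
\dim\BRS{0}{M}_d=\dim H^0(\gr^\Gamma C(M))_d.
\]
Combined with Proposition~\ref{Pro:vanishing-intermideate-1}(ii) this gives $\dim\BRS{0}{M}_d=\dim H^0(C(M)/\Gamma^p C(M))_d$ for $p\gg 0$, and then the factorization $\BRS{0}{M}_d\twoheadrightarrow\BRS{0}{M}_d/\Gamma^p\hookrightarrow H^0(C(M)/\Gamma^p C(M))_d$ forces both maps to be isomorphisms, giving $\Gamma^p\BRS{0}{M}_d=0$. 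The step (ii)$\Rightarrow$(iii) then proceeds via the long exact sequence. In other words, the identification $\gr^q_\Gamma\BRS{0}{M}_d\cong H^0(\gr^q_\Gamma C(M))_d$ that your telescoping argument uses as an \emph{input} is actually an \emph{output} of the proposition (it is the content of Theorem~\ref{Th:strong-vanishing}); one cannot get it from the spectral sequence alone.
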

\begin{proof}
Put $C=C(M)$.
The direction (iii) $\Rightarrow$ (iv) is obvious.
The condition (iv) means that 
the filtration $\Gamma^\bullet C(M)_d$ is regular.
Therefore,
the associated spectral sequence converges to $\BRS{\bullet}{M}$,
and
this collapses at $E_1=E_{\infty}$
by virtue of  Theorem \ref{Th:BRST-vanishing-for-assicaited-graded}.
Hence (iv) implies (i).
Next let us  show that (i) implies (ii).
By Proposition \ref{Pro:regular?},
(i) gives $\Gamma^p\BRS{i}{M}=0$ for all $i\ne 0$,
$p\geq 0$.
It remains to show that
$\Gamma^p\BRS{0}{M}_d=0$ for $p\gg 0$.
Recall that
$H^i(\gr^{\Gamma}C(M))_d=0$ for $i\ne 0$
and 
 $H^0(\gr^{\Gamma}C(M))_d$ 
is finite-dimensional, see
Theorem \ref{Th:BRST-vanishing-for-assicaited-graded}
and Proposition \ref{Pro:finite-dimensionality-of-associated-graded}.
The cohomology vanishing  assumption (i) 
and the Euler-Poincar\'{e} principle
implies 
 that
\begin{align}
\dim \BRS{0}{M}_d=\dim H^0(\gr^{\Gamma}C)_d.
\label{eq:dim-is-equal}
\end{align}
Consider the composition
\begin{align}
 \BRS{0}{M}_d\twoheadrightarrow \BRS{0}{M}_d/\Gamma^p\BRS{0}{M}_d
\hookrightarrow H^0(C/\Gamma^p C)_d.
\label{eq:iso-iso?}
\end{align}
As
$H^0(C/\Gamma^p C)_d=H^0(\gr^{\Gamma} C)_d$
for $p\gg 0$
by
Proposition \ref{Pro:vanishing-intermideate-1} (ii),
\eqref{eq:dim-is-equal}
implies that
 the two maps in \eqref{eq:iso-iso?}
 must be isomorphisms
for $p\gg 0$,
proving
 (ii).
Finally, let us show (ii) implies (iii).
By the assumption and the injectivity
of
\eqref{eq:basis-injection},
$\BRS{i}{M}_d$ is embedded into 
$H^i(C/\Gamma^p C)_d$ for $p\gg 0$.
Since the latter vanishes
for $i\ne 0$
by Proposition \ref{Pro:vanishing-intermideate-1}
we get that
$\BRS{i}{M}=0$ for $i\ne 0$.
This together  with
Proposition \ref{Pro:vanishing-intermideate-1} (i)
implies that
the long exact sequence associated 
sequence $0\ra \Gamma^p C\ra C\ra C/\Gamma^p C\ra 0$
gives that
\begin{align*}
&H^i(\Gamma^p C)=0\quad\text{for }i\ne 0,1,\\
&0\ra H^0(\Gamma^p C)\ra \BRS{0}{M}\ra H^0(C/\Gamma^p C)\ra H^1(\Gamma^p C)\ra 0
\quad \text{(exact).}
\end{align*}
On the other hand 
in the proof of (ii)$\Rightarrow$(iii) 
we have
 proved that  the cohomology vanishing 
of
$\BRS{\bullet}{M}$
implies that
the maps in \eqref{eq:iso-iso?} are isomorphisms for $p\gg 0$.
Thus the middle map of the above exact sequence is an isomorphism.
This show that  $H^{0}(\Gamma^p C)_d=H^1(\Gamma^p C)_d=0$ for 
$p\gg 0$,
completing the proof.
\end{proof}

  \begin{Pro}\label{Pro:surjectivity-revised}
Let $M\in \KL_k$, $\Gamma^{\bullet}M$ a compatible filtration.
The natural map
$C(M)\ra C(M)/\Gamma^p C( M)$ induces the surjection
$\BRS{0}{M}\twoheadrightarrow  H^0(C(M)/\Gamma^p C(M))$ for all $p\geq 1$.
  \end{Pro}
 \begin{proof}
The proof divided into 4 steps.
(i) Consider the case
$M\in \KL_k^{\Delta}$.
By Theorems \ref{Th:Kac-Wakimoto-vanising}
and Proposition \ref{Pro:strainge-way-of-argument},
we have
\begin{align*}
 \gr^\Gamma \BRS{0}{M}\cong H^0(\gr^\Gamma C(M)).
\end{align*}
Hence the assertion follows  from 
Proposition  \ref{Pro:vanishing-intermideate-1}.
(ii) Consider the case $M$ is finitely generated and  $\Gamma=F$.
There  is  an exact sequence 
\begin{align}
 0\ra N\ra P{\ra} M\ra 0
\label{eq:short-exact}
\end{align}
in ${\KL}_k$
with
$P\in {\KL}_k^{\Delta}$.
Let $\Gamma^\bullet N$ be the compatible filtration of $N$
induced by $F^\bullet M$,
so that we have
 the exact sequence
\begin{align*}
 0\ra C(N)/\Gamma^p C(N)\ra C(P)/F^p C(P)\ra C(M)/ F^p C( M)\ra 0.
\end{align*}
By Proposition   \ref{Pro:vanishing-intermideate-1},
this induces an exact sequence
\begin{align*}
 0\ra H^0(C(N)/\Gamma^p C(N))\ra H^0(C(P)/F^p C(P))\ra H^0(C(M)/ F^p C( M))\ra 0.
\end{align*}
Consider the commutative diagram
\begin{align*}
\minCDarrowwidth1pc
 \begin{CD}
\BRS{0}{P}
@>>> H^0(C(P)/F^p C(P))
\\
 @VVV @VVV
\\
\BRS{0}{M}
@>>>H^0(C(M)/F^p C(M)).
 \end{CD}&
\end{align*}
Since vertical arrows and the upper horizontal arrow are  surjective,
the lower horizontal arrow is surjective as well.
(iii)
Let $\Gamma^{\bullet}M$ be an arbitrary compatible filtration 
of a finitely generated object $M$ of $\KL_k$.
Set $C=C(M)$,
and 
let $F^{\bullet}(C/\Gamma^p C)$
 be the filtration
of 
$C/\Gamma^p C$ induced by  the Li filtration $F^{\bullet}C$.
Since $F^{p}C\subset \Gamma^p C$,
this filtration is regular,
and
there is a converging spectral sequence
\begin{align}
E_r\Rightarrow H^{\bullet}(C/\Gamma^p C)
\label{eq;s.s.08-24}
\end{align}
whose $E_1$-term is $H^{\bullet}(\gr^F (C/\Gamma^p C))$.
Observe that 
\begin{align*}
 \gr^F (C/\Gamma^p C)=(\gr^F C)/\Gamma^p (\gr^F C),
\end{align*}
where $\Gamma^{\bullet}(\gr^F C)$ is the filtration of
$\gr^F C$ induced by  $\Gamma^{\bullet}C$.
The associated graded space
$\gr^{\Gamma}(\gr^F C)$ is naturally a vertex Poisson $\C[\fing^*_{\infty}]$-module
which belongs to $\Cat$.
Hence we have
\begin{align*}
 H^i(\gr^\Gamma(\gr^F C))=0\text{ for }i\ne 0
\end{align*}
by  Theorem \ref{Th:BRST-vanishing-for-assicaited-graded}.
As in the proof of   Proposition   \ref{Pro:vanishing-intermideate-1}
one sees that 
\begin{align}
 H^i((\gr^F C)/\Gamma^p (\gr^F C))=0\text{ for }i\ne 0,
\label{eq:vanishing-gr-gr}
\end{align}
and 
that
 the exact sequence
\begin{align*}
 0\ra \Gamma^p (\gr^F C)\ra 
\gr^F C(M)\ra (\gr^F C)/(\Gamma^p \gr^F C)\ra 0
\end{align*}
induces the exact sequence
\begin{align}
 0\ra H^0(\Gamma^p (\gr^F C))\ra 
H^0(\gr^F C)\ra H^0((\gr^F C)/(\Gamma^p \gr^F C))\ra 0.
\label{eqexact-gr-gr}
\end{align}
By \eqref{eq:vanishing-gr-gr}
the  spectral
  sequence
\eqref{eq;s.s.08-24}
  collapses at $E_1=E_{\infty}$ and we have
\begin{align*}
 \gr^F H^0(C/\Gamma^p C)\cong H^0((\gr^F C)/(\Gamma^p \gr^F C)). 
\end{align*}
Therefore \eqref{eqexact-gr-gr}
implies  that 
the natural map
$\gr^F C\ra \gr^F (C/\Gamma^p C)
$ induces the surjection
$H^0(\gr^F C)\ra H^0(\gr^F (C/\Gamma^pC))$.
This shows the surjectivity of the map 
\begin{align}
\gr^F H^0(C/F^p C)
\ra \gr^F H^0(C/\Gamma^p C)
\label{eq:surjective-08-25}
\end{align}
since 
$\gr^F H^0(C/F^p C)\cong H^0(\gr^F (C/F^p C))$,
$\gr^F H^0(C/\Gamma^p C)
\cong H^0(\gr^F (C/\Gamma^p C))$,
and we have  the commutative diagram
\begin{align*}
\minCDarrowwidth1pc
 \begin{CD}
H^0(\gr^F C)
@= H^0(\gr^F C)
\\
 @VVV @VVV
\\
H^0(\gr^F (C/F^p C))
@>>>H^0(\gr^F (C/\Gamma^p C)).
 \end{CD}&
\end{align*}
Since \eqref{eq:surjective-08-25}
is surjective,
we find that
the natural map
$C/F^p C\ra C/\Gamma^p C$ induces the surjection
\begin{align*}
 H^0(C/F^p C)\twoheadrightarrow  H^0(C/\Gamma^p C).
\end{align*}
As we have
the commutative diagram
\begin{align*}
\minCDarrowwidth1pc
 \begin{CD}
\BRS{0}{M}
@= \BRS{0}{M}
\\
 @VVV @VVV
\\
H^0(C/F^p C)
@>>>H^0( C/\Gamma^p C)
 \end{CD}&
\end{align*}
and
 the left horizontal arrow is surjective as proved in (ii),
the right horizontal arrow must be surjective   As well.
(iv) Let $M$, $\Gamma^{\bullet}M$ be arbitrary.
Take a sequence 
 $M_1\subset M_2\subset \dots $ of
of finitely generated
  submodules of $M$ such that $M=\bigcup_i M_i$.
Let
$\Gamma^\bullet M$ be  the induced filtration of $M_i$.
Then $\Gamma^{\bullet}M_i$ is compatible for all $i$, 
 $\Gamma^p M_1\subset \Gamma^p M_2\subset \dots$,
$\Gamma^p M=\bigcup_i \Gamma^p M_i$,
and $M/\Gamma^p M=\bigcup_i (M_i/\Gamma^p M_i)$.
Since the cohomology functor commutes with the injective limits
we have
$\BRS{0}{M}=\lim\limits_{\longrightarrow \atop i}\BRS{0}{M_i}$,
$H^0(C(M)/\Gamma^p C(M))=\lim\limits_{\longrightarrow \atop
  i}H^0(C(M_i)/\Gamma^p C(M_i))$.
Therefore  the assertion follows from (iii).
   \end{proof}

\begin{Pro}\label{Pro;i=0,1}
 Let $M$ be an object of $\KL_k$,
$\Gamma^{\bullet}M$ a compatible filtration.
We have $H^i(\Gamma^p C(M))=0$ for $i\geq 1$,
 $p\geq 1$.
\end{Pro}
\begin{proof}
The assertion follows from
Propositions \ref{Pro:vanishing-intermideate-1} 
and \ref{Pro:surjectivity-revised}
by
considering
the
long exact sequence associated with the short exact sequence
\begin{align}
 0\ra \Gamma^pC(M)\ra C(M)\ra C(M)/\Gamma^p C(M)\ra 0.
\label{eq:short-exact-C-Gamma}
\end{align}
\end{proof}

 \begin{Pro}\label{Pro:important-surjectivity}
 Let $M$ be an object of $\KL_k$,
$\Gamma^{\bullet}M$ a compatible filtration,
$d\in \C$.
Assume that
$H^0(\gr^\Gamma C(M))_d$ is finite-dimensional.
Then
the natural map
$C(M)\ra C(M)/\Gamma^p C(M)$ induces the isomorphism
\begin{align*}
\BRS{0}{M}_d\isomap H^0(C(M)/\Gamma^p C(M))_d
\end{align*}
for a sufficiently large $p$.
 \end{Pro}
  \begin{proof}
In view of   Propositions \ref{Pro:vanishing-intermideate-1} and
 \ref{Pro:surjectivity-revised},
the statement is equivalent to that 
\begin{align*}
\dim \BRS{0}{M}_d=\dim H^0(\gr^\Gamma C(M))_d.
\end{align*}
Since 
$\dim H^0(\gr^\Gamma C(M))_d$ is independent of the choice of a
   compatible filtration $\Gamma$
by Proposition
   \ref{Pro:finite-dimensionality-of-associated-graded}, 
we may assume that $\Gamma=F$.
Also, since the cohomology functor commutes with the injective limits
we may assume that $M$ is finitely generated,
so that 
there is  an exact sequence 
\begin{align}
 0\ra N\ra P{\ra} M\ra 0
\label{eq:short-exact}
\end{align}
in ${\KL}_k$
with
$P\in {\KL}_k^{\Delta}$.
We have  the exact sequence
\begin{align*}
 0\ra \Gamma^p C(N)\ra F^p C(P)\ra F^p C(M)\ra 0.
\end{align*}
where $\Gamma^\bullet C(M)$ is the compatible filtration of $C(N)$
induced by $F^\bullet C(M)$.
By Proposition \ref{Pro;i=0,1},
   this induces the exact sequence
\begin{align*}
  H^0(F^p C(P))_d\ra H^0(F^p C(M))_d\ra 0
\end{align*}
As $H^0(F^p C(P))_d=0$ for $p\gg 0$ by 
Theorem 
\ref{Th:Kac-Wakimoto-vanising} and Proposition
   \ref{Pro:strainge-way-of-argument},
we get that 
$H^0(F^p C(M))_d=0$.
Therefore 
Proposition  \ref{Pro:surjectivity-revised}
and
the long exact sequence
associated with the exact sequence
$0\ra F^p C(M)\ra C(M)\ra C(M)/F^p C(M)\ra 0$
gives the isomorphism
$\BRS{0}{M}_d\isomap H^0(C(M)/F^p C(M))_d$ for $p\gg 0$
as required.
  \end{proof}

\begin{Th}
\label{Th:vanishing-new}
 We have $\BRS{i}{M}=0$ for $i\ne 0$
with any object $M$ in $\KL_k$.
In particular   the functor 
$\KL_k\ra \Wg{k}\Mod$,
$M\mapsto \BRS{0}{M}$, is exact.

\end{Th}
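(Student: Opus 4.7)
The plan is to deduce the theorem from Propositions \ref{Pro:strainge-way-of-argument} and \ref{Pro;i=0,1} together with Theorem \ref{Th:BRST-vanishing-for-assicaited-graded}, by verifying the regularity condition (iv) of Proposition \ref{Pro:strainge-way-of-argument} for finitely generated $M \in \KL_k$. Since Theorem \ref{Th:right-exactness} already provides $\BRS{i}{M} = 0$ for $i>0$, the genuinely new content is vanishing in negative cohomological degree, which upgrades right-exactness to full exactness of $\BRS{0}{?}$.

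First I would reduce to the finitely generated case. Any $M \in \KL_k$ is the filtered union of its finitely generated submodules $M_i$; since $C^{\bullet}(?)$ and cohomology both commute with filtered colimits, $\BRS{i}{M} = \varinjlim \BRS{i}{M_i}$, so vanishing on each $M_i$ propagates to $M$. For finitely generated $M$, I would fix the Li filtration $\{F^p M\}$ (any compatible filtration would do). Proposition \ref{Pro:finite-dimensionality-of-associated-graded}(ii) gives finite-dimensionality of $H^0(\gr^F C(M))_d$ for every $d \in \C$, whence Proposition \ref{Pro;i=0,1} delivers $H^i(F^p C(M)) = 0$ for $i>0$ and all $p$, and $H^0(F^p C(M))_d = 0$ for $p \gg 0$. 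What remains is the analogous vanishing $H^i(F^p C(M))_d = 0$ for $i<0$ and $p \gg 0$.

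For the negative-degree step, fit $M$ into a short exact sequence $0 \to N \to P \to M \to 0$ in $\KL_k$ with $P \in \KL_k^{\Delta}$. By Theorem \ref{Th:Kac-Wakimoto-vanising} the Weyl-flag module $P$ satisfies $\BRS{i}{P} = 0$ for all $i \neq 0$, i.e.\ condition (i) of Proposition \ref{Pro:strainge-way-of-argument}, so the equivalent condition (iii) yields $H^i(F^p C(P)) = 0$ for all $i \neq 0$ and all $p$, and $H^0(F^p C(P))_d = 0$ for $p \gg 0$. Equipping $N$ with the induced filtration $F^p N = N \cap F^p P$, the long exact sequence attached to $0 \to F^p C(N) \to F^p C(P) \to F^p C(M) \to 0$, chased in negative cohomological degrees and combined with the colimit-over-finitely-generated-submodules trick used in the arbitrary-$M$ part of the proof of Proposition \ref{Pro;i=0,1}(i), transfers the required vanishing from $P$ (and the already known $H^0$-vanishing for $N$ at large $p$) to $M$. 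This establishes condition (iv) of Proposition \ref{Pro:strainge-way-of-argument}.

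Once condition (iv) is in hand, the filtration $\{F^p C(M)_d\}$ is regular, the associated spectral sequence converges to $\BRS{\bullet}{M}_d$, and by Theorem \ref{Th:BRST-vanishing-for-assicaited-graded} its $E_1$-page is concentrated in cohomological degree zero; hence it collapses and $\BRS{i}{M}_d = 0$ for all $i \neq 0$ and all $d$, giving $\BRS{i}{M} = 0$. Exactness of $\BRS{0}{?} : \KL_k \to \Wg{k}\Mod$ then follows immediately from the long exact sequence of BRST cohomology applied to any short exact sequence in $\KL_k$. The main obstacle is the negative-degree control on the kernel $N$, which need not be finitely generated; this is precisely where one must exchange cohomology with the filtered colimit over finitely generated submodules of $N$ in order to complete the long-exact-sequence argument.
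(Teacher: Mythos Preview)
Your reduction to finitely generated $M$ and the treatment of $i>0$ via Proposition~\ref{Pro;i=0,1} are fine, and your filtered long exact sequence argument does give $H^{-1}(\Gamma^p C(M))_d=0$ for $p\gg0$ (once you note that $H^0(\gr^\Gamma C(N))_d$ is finite-dimensional, being a subspace of $H^0(\gr^\Gamma C(P))_d$, so that Proposition~\ref{Pro;i=0,1}(ii) applies to $N$---you gloss over this). Via Proposition~\ref{Pro:regular?} this even yields $\BRS{-1}{M}=0$, giving a valid alternative to the paper's base case.

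The gap is at $i\le -2$. Your long exact sequence gives $H^{-2}(\Gamma^p C(M))_d\cong H^{-1}(\Gamma^p C(N))_d$, and you want the right side to vanish for $p\gg0$. But $N$ need not be finitely generated, and the colimit trick you invoke does not help here: writing $N=\bigcup_j N_j$ with $N_j$ finitely generated, one gets $H^{-1}(\Gamma^p C(N))_d=\varinjlim_j H^{-1}(\Gamma^p C(N_j))_d$, and for each $N_j$ the vanishing is only known for $p\ge p_0(j,d)$ with $p_0$ depending on $j$. There is no reason the $p_0(j,d)$ are bounded, so you cannot conclude vanishing for any fixed $p$. The colimit trick in the proof of Proposition~\ref{Pro;i=0,1}(i) works precisely because $H^1(\Gamma^p C(N_j))=0$ holds for \emph{all} $p$, uniformly in $j$; that uniformity is absent in negative degree. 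Nor can you invoke the implication (i)$\Rightarrow$(iii) of Proposition~\ref{Pro:strainge-way-of-argument} for the $N_j$ to obtain uniform vanishing, since condition~(i) requires $\BRS{i}{N_j}=0$ for \emph{all} $i\ne0$, which is exactly the statement being proved.

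The paper avoids this by abandoning the filtered complexes after the base case and inducting directly on $\BRS{i}$. For $i=-1$ it compares the exact sequence $0\to \BRS{-1}{M}\to \BRS{0}{N}\to \BRS{0}{P}\to \BRS{0}{M}\to 0$ with $0\to H^0(C(N)/\Gamma^p C(N))\to H^0(C(P)/\Gamma^p C(P))\to H^0(C(M)/\Gamma^p C(M))\to 0$; since the three vertical maps on $\BRS{0}$-terms are isomorphisms for $p\gg0$ by Proposition~\ref{Pro;i=0,1}, one reads off $\BRS{-1}{M}=0$. The inductive step is then pure dimension shifting in the long exact sequence of $\BRS{\bullet}$ attached to $0\to N\to P\to M\to 0$: knowing $\BRS{-i}{?}=0$ on all of $\KL_k$, the sequence $0=\BRS{-i-1}{P}\to\BRS{-i-1}{M}\to\BRS{-i}{N}=0$ gives the next case. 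Your filtered approach could be spliced in as an alternative base-case argument, but for the step you must switch to this $\BRS{\bullet}$-level induction.
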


\begin{proof}
By Theorem \ref{Th:right-exactness}
it remains to show that
$\BRS{i}{M}=0$ for all $i\leq  -1$.
We proceed  by induction on $|i|$.
Since the cohomology functor commutes with the injective limits
we may assume that $M$ is finitely generated.
Thus, there are  an object $P\in \KL_k^{\Delta}$
and an exact sequence
\begin{align}
 0\ra N\ra P\ra M\ra 0
\label{eq:short-exact2}
\end{align}
in $\KL_k$.
Let $\Gamma^\bullet P$ be a  compatible filtration of $P$,
and let $\Gamma^\bullet M$ and
$\Gamma^\bullet N$ be induced filtration
as in the proof of Proposition \ref{Pro;i=0,1}.
We have the short exact sequence
$0\ra \gr^{\Gamma} N\ra \gr^{\Gamma}P\ra \gr^{\Gamma}M\ra 0$,
that induces an exact sequence
$0\ra H^0(\gr^{\Gamma}C(N))
\ra H^0(\gr^{\Gamma}C(P))
\ra H^{0}(\gr^{\Gamma}C(M))\ra 0$ by Theorem \ref{Th:BRST-vanishing-for-assicaited-graded}.
Note that  
$H^0(\gr^{\Gamma}C(N))_d$,
$H^0(\gr^{\Gamma}C(P))_d$
and $H^0(\gr^{\Gamma}C(M))_d$
 are  finite-dimensional for all $d\in \C$
by Proposition
 \ref{Pro:finite-dimensionality-of-associated-graded}.

We have the commutative diagram
\begin{align*}
\minCDarrowwidth1pc
 \begin{CD}
  0@>>> H_f^{\semiinf-1}(M)_d@>>> \BRS{0}{N}_d@>>>
\BRS{0}{P}_d@>>> \BRS{0}{M}_d@>>> 0\\
@. @. @VVV @VVV@VVV
\\
@. 0@>>>H^0(C_1/\Gamma^p C_1)_d
@>>>H^0(C_2/\Gamma^p C_2)_d
@>>>H^0(C_3/\Gamma^p C_3)_d
@>>> 0,
 \end{CD}&
\end{align*}
where 
$C_1=C(N)$,
$C_2=C(P)$,
$C_3=C(M)$.
But the vertical arrows are all isomorphisms by 
 Proposition \ref{Pro:important-surjectivity}
for a sufficiently large $p$.
Therefore we get that
$H_f^{\semiinf-1}(M)=0$.

Next suppose that we have shown that
$H_f^{\semiinf-i}(M)=0$
for all
objects $M\in \KL_k$.
Since $H_f^{\semiinf-i-1}(P)=0$ by 
Theorem \ref{Th:Kac-Wakimoto-vanising},
the long exact sequence associated with 
\eqref{eq:short-exact}
gives that
$H_f^{\semiinf-i-1}(M)=0$ as desired.
\end{proof}
\begin{Rem}
 In the case that
$f$ is a principal nilpotent element 
Theorem 
\ref{Th:BRST-vanishing-for-assicaited-graded}
has been obtained by Frenkel and Gaitsgory \cite{FreGai07}
from the vanishing result \cite{Ara07} of the BRST cohomology
associated with the ``$-$''-reduction functor,
which is a certain modified version \cite{FKW92} of 
the cohomology functor $\BRS{\bullet}{?}$.
\end{Rem}

 Let $M$ be an object of $\KL_k$,
$\Gamma^\bullet M$ be a  compatible  filtration of $M$.
By Proposition \ref{Pro:strainge-way-of-argument}
and Theorem \ref{Th:vanishing-new},
the filtration $F^p C(M)_d$ is regular, and hence
there is a converging spectral sequence
$E_r\Rightarrow \BRS{\bullet}{M}$ whose $E_1$-term is 
$H^{\bullet}(\gr^\Gamma C(M))$.
By Theorem \ref{Th:BRST-vanishing-for-assicaited-graded}
this  spectral sequence
collapses at $E_1=E_{\infty}$ 
and
we have
the isomorphism
\begin{align}
 \gr^\Gamma \BRS{0}{M}
\cong  H^0(\gr^\Gamma C(M))\cong 
		       (\gr ^\Gamma M/I_{\infty}\gr^\Gamma
 M)^{\finn[t]}.
\label{eq:associated-graded-as-vector-spaces}
\end{align}

\begin{Th}
\label{Th:strong-vanishing}
$ $

\begin{enumerate}
 \item The Li filtration 
of $\Vg{k}$ induces the Li filtration 
of the vertex algebra $\Wg{k}$,
that is,
$F^p \Wg{k}=\im (H^0(F^p C(\Vg{k}))\ra \Wg{k})$.
Moreover,
 we have  
\begin{align*}
\gr^F \Wg{k}\cong \C[\Sl_{\infty}]
\end{align*}
as 
vertex Poisson algebras.
 \item   
More generally,
for 
a finitely generated  object $M$ of $\KL_k$,
the Li filtration of $M$ induces the Li filtration of $\BRS{0}{M}$.
Hence
\begin{align*}
 \gr^F \BRS{p}{M}
&\cong  H^p(\gr^F C(M))\\
&\cong 
\begin{cases}
 		       (\gr ^F M/I_{\infty}\gr^F M)^{\finn[t]}&\text{for
 }p=0,\\
0&\text{for }p\ne 0,
\end{cases}
\end{align*}
as modules over the vertex Poisson algebras $\C[\Sl_{\infty}]$.
\end{enumerate}
\end{Th}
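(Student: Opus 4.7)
The plan is to prove (ii) first by direct cohomology long-exact-sequence arguments, and then deduce (i) by specializing to $M = \Vg{k}$ and identifying the induced filtration on $\Wg{k}$ with its intrinsic Li filtration.

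\textbf{Proof of (ii).} Fix $M$ finitely generated in $\KL_k$ with compatible filtration $\{\Gamma^p M\}$, and write $C = C(M)$. The two vanishing packages I will combine are: first, from Propositions \ref{Pro:finite-dimensionality-of-associated-graded}(ii) and \ref{Pro;i=0,1}, the space $H^0(\gr^\Gamma C)_d$ is finite-dimensional, $H^i(\Gamma^p C) = 0$ for $i > 0$, and $H^0(\Gamma^p C)_d = 0$ once $p$ is sufficiently large relative to $d$; second, from Proposition \ref{Pro:vanishing-intermideate-1}(i), $H^i(C/\Gamma^p C) = 0$ for $i \neq 0$. Using the second package, the long exact sequence of $0 \to \Gamma^p C \to C \to C/\Gamma^p C \to 0$ degenerates to a short exact sequence $0 \to H^0(\Gamma^p C) \to \BRS{0}{M} \to H^0(C/\Gamma^p C) \to 0$, so the canonical map identifies $H^0(\Gamma^p C)$ with the filtered piece $\Gamma^p \BRS{0}{M} := \im(H^0(\Gamma^p C) \to \BRS{0}{M})$. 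Using the first package, the long exact sequence of $0 \to \Gamma^{p+1} C \to \Gamma^p C \to \Gamma^p C/\Gamma^{p+1} C \to 0$ degenerates to $0 \to H^0(\Gamma^{p+1} C) \to H^0(\Gamma^p C) \to H^0(\Gamma^p C/\Gamma^{p+1} C) \to 0$. Summing over $p$ yields $\gr^\Gamma \BRS{0}{M} \cong H^0(\gr^\Gamma C)$, and Theorem \ref{Th:BRST-vanishing-for-assicaited-graded}(ii) identifies the right-hand side with $(\gr^\Gamma M / I_\infty \gr^\Gamma M)^{\finn[t]}$; naturality of all constructions preserves the $\C[\Sl_\infty]$-module structure throughout.

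\textbf{Proof of (i).} Specialize (ii) to $M = \Vg{k}$ with $\Gamma = F$ the Li filtration on $\Vg{k}$, and set $\tilde F^p \Wg{k} := \im(H^0(F^p C(\Vg{k})) \to \Wg{k})$. Together with Theorem \ref{Th:BRST-vanishing-for-assicaited-graded}(i), this gives $\gr^{\tilde F} \Wg{k} \cong \C[\Sl_\infty]$ as vertex Poisson algebras. It remains to identify $\tilde F$ with the Li filtration $F$ of $\Wg{k}$. The inclusion $F^p \Wg{k} \subseteq \tilde F^p \Wg{k}$ is immediate: any generator $a^1_{(-n_1-1)} \cdots a^r_{(-n_r-1)} \1$ of $F^p \Wg{k}$ with $\sum n_i \geq p$ lifts through cocycle representatives to an element of $F^p C(\Vg{k})$, since the Li filtration on the tensor product $\Vg{k} \otimes \Fneu \otimes \Lamsemi{\bullet}$ adds across factors. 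For the reverse inclusion, note that $\gr^{\tilde F} \Wg{k} \cong \C[\Sl_\infty]$ is generated as a differential algebra by its degree-zero piece $\tilde F^0/\tilde F^1$; since this is a quotient of $R_{\Wg{k}} = F^0/F^1$ (via $F^1 \subseteq \tilde F^1$), a lift-and-compare argument in each $H_{\new}$-eigenspace---finite-dimensional by Corollary \ref{Co:finite-dimensionality}---shows that $F^p/F^{p+1} \twoheadrightarrow \tilde F^p/\tilde F^{p+1}$ is surjective, hence (by total-dimension count) an isomorphism, forcing $\tilde F = F$ and, as a byproduct, $R_{\Wg{k}} \cong \C[\Sl]$.

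\textbf{Main obstacle.} The delicate point is the filtration identification in (i): one must upgrade an abstract isomorphism of associated graded spaces to an equality of filtrations on $\Wg{k}$ itself. This requires the ``generated in degree zero'' observation for $\gr^{\tilde F} \Wg{k}$, together with finite-dimensional bookkeeping in each $H_{\new}$-eigenspace. Everything else is formal once the cohomology vanishing assertions of the preceding subsections are in hand.
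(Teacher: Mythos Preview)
Your proposal is correct and follows essentially the same strategy as the paper. The only notable variation is in part (ii): the paper invokes Theorem~\ref{Th:vanishing-new} (full vanishing $\BRS{i}{M}=0$ for $i\neq 0$) together with the equivalence in Proposition~\ref{Pro:strainge-way-of-argument} to conclude that the spectral sequence associated to $\{\Gamma^p C(M)\}$ converges and collapses, whereas you bypass the spectral-sequence formalism entirely and work directly with two long exact sequences, citing Proposition~\ref{Pro;i=0,1} for the needed vanishing of $H^{i}(\Gamma^p C)$, $i>0$. Your route is marginally more economical in that it does not require the negative-degree vanishing established in Theorem~\ref{Th:vanishing-new}; one small omission is that the degeneration of your second long exact sequence on the left also uses $H^{-1}(\Gamma^p C/\Gamma^{p+1}C)=0$, which comes from Theorem~\ref{Th:BRST-vanishing-for-assicaited-graded}(ii) rather than from your ``first package.'' For part (i), both you and the paper argue $F^p\Wg{k}\subset \tilde F^p\Wg{k}$, observe that the induced map $\gr^F\Wg{k}\to\gr^{\tilde F}\Wg{k}\cong\C[\Sl_\infty]$ is surjective (because the target is generated as a differential algebra by $\C[\Sl]$), and finish by a dimension count in each $H_{\new}$-eigenspace; the paper cites \cite[Proposition~2.6.1]{Ara09a} and \cite[Theorem~4.14]{Li04} for the inclusion where you give a direct lifting argument, but the content is the same.
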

\begin{proof}
Let $M$ be an object of $\KL_k$.
We  temporary denote
 by $\Gamma^\bullet \BRS{0}{M}$
the compatible
filtration of $\BRS{0}{M}$ induced by the Li filtration of $M$.
We have
\begin{align*}
F^p \BRS{0}{M}\subset \Gamma^p \BRS{0}{M}
\end{align*}
since the Li filtration is the finest compatible filtration
of $\BRS{0}{M}$.
It follows that
we have the natural map
\begin{align}
 \gr^F \BRS{0}{M}\ra \gr^\Gamma \BRS{0}{M}
\cong  (\gr ^\Gamma M/I_{\infty}\gr^\Gamma
 M)^{\finn[t]}
\label{eq:map-to-be-iso}
\end{align}
by  \eqref{eq:associated-graded-as-vector-spaces}.

(i)
Let $M=\Vg{k}$. 
Observe that the map (\ref{eq:map-to-be-iso}) is an homomorphism 
\begin{align}
 \gr^F \Wg{k}\ra \gr^\Gamma \Wg{k}\cong \C[\mathcal{S}_{\infty}]
\label{eq:F-to-K}
\end{align}
of vertex Poisson algebras.
In particular,
this restricts to the
surjective homomorphism
\begin{align*}
 \Wg{k}/F^1\Wg{k}\twoheadrightarrow \Wg{k}/\Gamma^1\Wg{k}\cong \C[\mc{S}]
\end{align*}
of Poisson algebras.
Since
$\C[\Sl_{\infty}]$ is generated by 
$\C[\Sl]$ as differential algebras,
the map  (\ref{eq:F-to-K})
is  surjective.
As
$\dim \gr^F\Wg{k}_d=\dim \gr^\Gamma \Wg{k}_d
=\dim \Wg{k}_d$ 
for each $d$,
the map (\ref{eq:F-to-K}) must be injective as well.
(ii)
The map \eqref{eq:F-to-K}
restricts to the surjection
\begin{align*}
\BRS{0}{M}/F^1 \BRS{0}{M}\twoheadrightarrow
 \BRS{0}{M}/\Gamma^1\BRS{0}{M}\cong H^0(C(M)/F^1 C(M)),
\end{align*}
which is 
a homomorphism
of $\C[\mc{S}]$-modules  by (i).
Since 
$H^0(\gr^ F C(M)))$ is generated by 
$ H^0(C(M)/F^1 C(M))$
by Proposition \ref{Pro:generating-subspace},
\eqref{eq:map-to-be-iso} must be surjective,
and hence is an isomorphism.
\end{proof}

\begin{Co}
\label{Co:Zhu's-Poisson-algebra-of-W}
\begin{enumerate}
 \item (\cite{De-Kac06})
We have
\begin{align*}
\Ring{\Wg{k}}\cong \C[\Sl]
\end{align*}
 as Poisson algebras.
\item
For an finitely generated object $M$ of $\KL_k$
we have
\begin{align*}
\BRS{0}{M}/F^1 \BRS{0}{M}\cong (\bar M/ I\bar M)^{\finn}
\end{align*}
as Poisson modules over $\C[\Sl]$.
In particular
$\gr^F \BRS{0}{M}$ is
 finitely generated
over 
$\C[\Sl_{\infty}]$.
\end{enumerate}
\end{Co}

 \begin{Rem}
 The fact that
$\gr^F \BRS{0}{M}\cong H^0(\gr^F C(M))$
is true for any
object $M$ of $\KL_k$.
To see this,
  take an increasing sequence
$M_1\subset M_2\subset \dots $  of finitely generated submodule
  of $M$ such that
$M=\bigcup_{i=1}^{\infty}M_i$.
Then
$F^nM_1\subset F^n M_2\subset \dots $
and $F^n M_i=\bigcup_{i}F^n M_i$.
Hence
\begin{align*}
 H^{0}(F^n C(M)/F^{n+1}C(M))=
H^{0}(\lim\limits_{\longrightarrow \atop i}
F^nC(M_i)/F^{n+1}C(M_i))
\\=\lim\limits_{\longrightarrow\atop i} 
H^{i}(
F^nC(M_i)/F^{n+1}C(M_i))
=\lim\limits_{\longrightarrow
\atop i}  F^n \BRS{0}{M_i}/F^{n+1}\BRS{0}{M_i}\\
=F^n\BRS{0}{M}/F^{n+1}\BRS{0}{M}
\end{align*}
since cohomology functor commutes with the injective limits.

 \end{Rem}
Recall that
$V_k(\fing)$ (resp.\ $\W_k(\fing,f)$)
denotes the simple quotient of $\Vg{k}$ (resp.\ $\W^k(\fing,f)$).
By Theorem \ref{Th:vanishing-new},
the projection
$\Vg{k}\ra V_k(\fing)$ induces the
surjective 
homomorphism
\begin{align*}
\Wg{k}=\BRS{0}{\Vg{k}}\twoheadrightarrow \BRS{0}{V_k(\fing)}
\end{align*}
of vertex algebras.
Hence we have the following assertion.
\begin{Pro}
Suppose that
$\BRS{0}{V_k(\fing)}$
 is nonzero.
Then it is a 
quotient vertex algebra 
of
$\Wg{k}$,
and hence, 
 has $\W_k(\fing,f)$ as its unique graded simple quotient.
The vertex algebra $\BRS{0}{V_k(\fing)}$ is
$C_2$-cofinite
if and only if it is $C_2$-cofinite as a module over $\Wg{k}$.
If this is the case  the simple $W$-algebra  
$\W_k(\fing,f)$ is  $C_2$-cofinite as well.
\end{Pro}

\subsection{BRST Reduction of associated varieties}
We shall identify 
$\Ring{\Wg{k}}$ 
with
and $\C[\Sl]$
through  
 Corollary \ref{Co:Zhu's-Poisson-algebra-of-W} (i).
Thus,
for a finitely generated object $M$ of $\KL_k$,
 $X_{\BRS{0}{M}}$ is
a  $\C^*$-invariant Poisson subvariety  of 
$\Sl$.
The following assertion follows immediately from Corollary \ref{Co:Zhu's-Poisson-algebra-of-W} (ii).
\begin{Th}\label{Th:BRST-reduction-of-varieties}
  Let $M$ be a finitely generated object of $\KL_k$.
 The associated variety 
$\Ass{\BRS{0}{M}}$ is isomorphic to the 
scheme theoretic intersection $\Ass{M}\cap \Sl$.
\end{Th}

\begin{Pro}\label{Pro:condition-to-be-nonzero}
Let $M$ be a finitely generated object of $\KL_k$.
Then
$\BRS{0}{M}\ne 0$
if and only if 
$\Ass{M}$ contains the closure $\overline{\Ad G\cdot \chi}$
of the coadjoint orbit $\Ad G\cdot \chi$.
\end{Pro}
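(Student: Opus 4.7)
The plan is to apply the isomorphism of Theorem~\ref{Th:BRST-reduction-of-varieties}(ii), $\Ass{\BRS{0}{M}}\cong \Ass{M}\cap \Sl$, together with the contracting $\C^*$-action on $\Sl$ whose unique fixed point is $\chi$. The whole statement then becomes a straightforward consequence of these two facts, plus the standard observation that a nonzero finitely strongly generated graded module has nonempty associated variety.

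First I would handle the easy direction. If $\overline{\Ad G\cdot \chi}\subset \Ass{M}$, then in particular $\chi\in \Ass{M}\cap \Sl$. By Theorem~\ref{Th:BRST-reduction-of-varieties}(ii) this means $\chi\in \Ass{\BRS{0}{M}}$, so $\Ass{\BRS{0}{M}}\ne \emptyset$. Hence $\BRS{0}{M}\ne 0$ (if $\BRS{0}{M}$ were zero, its associated variety would of course be empty).

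For the converse, assume $\BRS{0}{M}\ne 0$. By Proposition~\ref{Pro:good-goes-to-goog-quantum}, $\BRS{0}{M}$ is finitely strongly generated over $\Wg{k}$, and by Corollary~\ref{Co:finite-dimensionality} it is graded by $\Hnew$ with finite-dimensional homogeneous pieces and grading bounded from below. A Nakayama-type argument on the lowest nonzero graded component shows $\BRS{0}{M}/C_2(\BRS{0}{M})\ne 0$, hence $\Ass{\BRS{0}{M}}\ne\emptyset$. Under the identification of Theorem~\ref{Th:BRST-reduction-of-varieties}(ii), $\Ass{\BRS{0}{M}}$ is a nonempty closed $\C^*$-invariant subvariety of $\Sl$. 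Since the $\C^*$-action induced by $\Hnew$ contracts $\Sl$ to the unique fixed point $\chi$, any nonempty closed $\C^*$-invariant subset of $\Sl$ must contain $\chi$. Thus $\chi\in \Ass{\BRS{0}{M}}=\Ass{M}\cap\Sl\subset \Ass{M}$. Since $\Ass{M}$ is closed and $\Ad G$-invariant (as noted in the introduction), we conclude $\overline{\Ad G\cdot \chi}\subset \Ass{M}$.

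The only mildly subtle point is the implication $\BRS{0}{M}\ne 0 \Rightarrow \Ass{\BRS{0}{M}}\ne\emptyset$; this is where we need both the finite strong generation over $\Wg{k}$ (Proposition~\ref{Pro:good-goes-to-goog-quantum}) and the grading-boundedness of $\BRS{0}{M}$. Once this is in place, the whole proposition is essentially a formal consequence of the quantum/classical compatibility built into Theorem~\ref{Th:BRST-reduction-of-varieties}, mirroring Ginzburg's finite-dimensional argument (\cite[Corollary~4.1.6]{Gin08}).
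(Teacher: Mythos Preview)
Your proof is correct and follows essentially the same route as the paper: both arguments combine Theorem~\ref{Th:BRST-reduction-of-varieties}(ii) with the contracting $\C^*$-action on $\Sl$ (so that any nonempty closed $\C^*$-stable subvariety contains $\chi$) and the $G$-invariance of $\Ass{M}$. The paper's proof is extremely terse (three sentences) and leaves implicit the step you singled out as ``mildly subtle,'' namely that $\BRS{0}{M}\ne 0$ forces $\Ass{\BRS{0}{M}}\ne\emptyset$; your explicit justification via the $\Hnew$-grading and Proposition~\ref{Pro:good-goes-to-goog-quantum} is a welcome clarification rather than a different approach.
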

\begin{proof}
Because   $\Ass{\BRS{0}{M}}$
is stable under the $\C^*$-action,
it follows that
$\BRS{0}{M}\ne 0$ if and only if
$\Ass{M}$ contains the unique $\C^*$-fixed point $\chi$
of $\Sl$.
The assertion follows because
$\Ass{M}$ is $G$-invariant and closed.
\end{proof}
The following assertion follows immediately
from \eqref{eq:C_2-cofintie-criteri},
Theorem \ref{Th:BRST-reduction-of-varieties}
and the transversality of $\mc{S}$ to $G$-orbits.
 \begin{Th}\label{Th:C2-cofinitness-if-variety-is-the-orbit}
Let $M$ be a finitely generated object of $\KL_k$.
Suppose that
$X_{M}\cong \overline{\Ad G.f}$
as subvarieties  of $\fing^*$.
Then $\BRS{0}{M}$ is (non-zero and) $C_2$-cofinite.
 \end{Th}

  \subsection{The BRST cohomology functor kills integrable representations}
\label{subsection:minimal-nilpotent}
Let $f_{\min}$ be a minimal nilpotent element of $\fing$,
$\chi_{\min}=\nu(f_{\min})$,
$\mb{O}_{min}= \Ad G.\chi_{\min}$.
Then $\mb{O}_{min}\subset \Nil$ is the unique nonzero nilpotent orbit
of minimal dimension (see \cite{ColMcG93}).

The following assertion  is a special case of
  \cite[Main Theorem]{Ara05}.
\begin{Th}
\label{Th:duke}
Let
 $\lam\in \affP^+_k$.
Then
 $H_{f_{\min}}^{\semiinf+0}(\Irr{\lam})$ is non-zero  
if and only
$\Irr{\lam}$ is not an integrable representation of $\affg$.
\end{Th}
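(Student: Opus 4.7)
The plan is to derive both implications from Proposition~\ref{Pro:condition-to-be-nonzero}, which translates the statement into the equivalence $\overline{\mb{O}_{\min}}\subseteq X_{\Irr{\lam}}$ if and only if $\Irr{\lam}$ is non-integrable. For the direction ``integrable $\Rightarrow$ vanishing'', I would use that integrability forces $k\in\Z_{\geq 0}$ and makes $\Irr{\lam}$ a module over the simple affine vertex algebra $V_k(\fing)=\Irr{k\Lam_0}$; since $V_k(\fing)$ is $C_2$-cofinite at integrable level (Frenkel--Zhu), $X_{V_k(\fing)}=\{0\}$, and hence $X_{\Irr{\lam}}\subseteq\{0\}$ fails to contain the positive-dimensional $\overline{\mb{O}_{\min}}$, so Proposition~\ref{Pro:condition-to-be-nonzero} gives $H^{\semiinf+0}_{f_{\min}}(\Irr{\lam})=0$.

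For the converse, my plan has two sub-steps. First, I would show $X_{\Irr{\lam}}\ne\{0\}$: non-integrability means $\lam(\alpha_0\che)=k-\bar\lam(\theta\che)\notin\Z_{\geq 0}$, so the $\mf{sl}_2$-module generated by $v_\lam$ under the affine triple $\{e_0,h_0,f_0\}$ is infinite-dimensional, whence $(e_{\theta,(-1)})^N v_\lam=f_0^N v_\lam\ne 0$ in $\Irr{\lam}$ for every $N$. Using $\gr^F \Weyl{\lam}\cong \C[\fing^*_{\infty}]\otimes E_{\bar\lam}$, the images of these vectors are the classes $e_\theta^N\otimes v_{\bar\lam}$ sitting in the pairwise distinct weight spaces $\bar\lam + N\theta$ of $R_{\Weyl{\lam}}$; I would then argue that infinitely many of them survive in $R_{\Irr{\lam}}$, so $R_{\Irr{\lam}}$ is infinite-dimensional. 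Second, I would prove the geometric claim that any closed $G$-invariant $\C^*$-conic subvariety $X\subseteq\fing^*$ with $X\ne\{0\}$ contains $\overline{\mb{O}_{\min}}$: if $X$ contains a nonzero nilpotent $y$, then $\overline{\Ad G\cdot y}\supseteq \overline{\mb{O}_{\min}}$ since $\mb{O}_{\min}$ is the unique minimal nonzero nilpotent orbit; if instead $y=y_s+y_n$ with $y_s\ne 0$, the Jacobson--Morozov cocharacter of $y_n$ in $\fing^{y_s}$ puts $y_s\in\overline{\Ad G\cdot y}\subseteq X$, and after placing $y_s\in\finh$ and picking a root $\alpha$ with $\alpha(y_s)\ne 0$, the identity $\Ad(\exp(sf_\alpha))y_s=y_s+s\alpha(y_s)f_\alpha\in X$ combined with the conic rescaling $u\to 0$, $us\to 1$ yields $f_\alpha\in X$ (up to a nonzero scalar) by closedness, whence $\overline{\Ad G\cdot f_\alpha}\supseteq\overline{\mb{O}_{\min}}$. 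Combining the two sub-steps yields $\overline{\mb{O}_{\min}}\subseteq X_{\Irr{\lam}}$, and Proposition~\ref{Pro:condition-to-be-nonzero} finishes the proof.

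The main obstacle is the first sub-step of the non-integrable direction: ensuring that infinitely many of the classes $e_\theta^N\otimes v_{\bar\lam}$ really do survive in $R_{\Irr{\lam}}$. A priori the passage to the simple quotient $\Irr{\lam}=\Weyl{\lam}/N_{\max}$ could, modulo $F^1\Weyl{\lam}$, impose relations among these classes and collapse the corresponding weight spaces, and controlling this demands either a careful weight- and conformal-weight-wise dimension count in $\gr^F \Irr{\lam}$, or the equivalent general fact that a simple $C_2$-cofinite object in $\KL_k$ is integrable, from which the step follows tautologically by contrapositive.
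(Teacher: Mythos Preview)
The paper does not prove this theorem: it is quoted from \cite{Ara05}, whose argument works directly with the BRST complex and characters of highest-weight modules. In the paper's logic Theorem~\ref{Th:duke} is an \emph{input}, from which Proposition~\ref{Pro:when-contains-minimal-orbit} and then Proposition~\ref{Pro:when-affine-is-lisse} are deduced via Proposition~\ref{Pro:condition-to-be-nonzero}. Your proposal reverses this flow: you prove $\overline{\mb{O}_{\min}}\subset X_{\Irr{\lam}}\Longleftrightarrow \Irr{\lam}$ non-integrable directly, and then read off Theorem~\ref{Th:duke} from Proposition~\ref{Pro:condition-to-be-nonzero}. This is a legitimate and genuinely different route, self-contained within the paper, and your geometric sub-step (any nonzero closed conic $G$-stable subvariety of $\fing^*$ contains $\overline{\mb{O}_{\min}}$) is correct.

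The obstacle you flag is real but removable by the weight count you mention. Pairing $\finh$-weights with $\rho\che$ and using $(\theta|\rho\che)=h_{\fing}-1$, one checks that the joint $(\finh,D)$-eigenspace of $\Weyl{\lam}$ containing $(e_\theta)_{(-1)}^N v_\lam$ is one-dimensional. Since this vector survives in $\Irr{\lam}$, the maximal submodule of $\Weyl{\lam}$ is zero in that bidegree; and since the image $e_\theta^N\otimes v_{\bar\lam}$ in $R_{\Weyl{\lam}}=\C[\fing^*]\otimes E_{\bar\lam}$ is manifestly nonzero, the vector also lies outside $C_2(\Weyl{\lam})$. Hence it lies outside their sum, and its image in $R_{\Irr{\lam}}$ is nonzero for every $N$, giving $\dim R_{\Irr{\lam}}=\infty$. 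Note, however, that your second suggested route---invoking that simple $C_2$-cofinite objects of $\KL_k$ are integrable---would be circular here: that is precisely Proposition~\ref{Pro:when-affine-is-lisse}, which in the paper is deduced \emph{from} Theorem~\ref{Th:duke}.
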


\begin{Pro}\label{Pro:when-contains-minimal-orbit}
Let $\lam\in \affP^+_k$.
If $\lam$ is integral dominant then
$X_{\Irr{\lam}}=\{0\}$.
Otherwise 
 $\overline{\mb{O}}_{\min}\subset \Ass{\Irr{\lam}}$.
In particular
$\Irr{\lam}$ is an integrable representation of
$\affg$ if and only if 
$X_{\Irr{\lam}}=\{0\}$.
\end{Pro}
\begin{proof}
It is well-known that
$X_{\Irr{\lam}}=\{0\}$
for an integrable representation $\Irr{\lam}$ 
 (see  e.g.\ \cite[Proposition 3.5.1]{Ara12} for a proof).
If $\Irr{\lam}$  is not integrable
 $\BRS{0}{\Irr{\lam}}\ne 0$
by Theorem \ref{Th:duke},
and hence,
 $\overline{\mathbb{O}}_{\min}\subset X_{\Irr{\lam}}$
by
Proposition  \ref{Pro:condition-to-be-nonzero}.
\end{proof}

The following assertion follows 
immediately from 
Proposition  \ref{Pro:condition-to-be-nonzero}
and Proposition
\ref{Pro:when-contains-minimal-orbit}.
\begin{Co}
\label{Co:kills-integrable}
 Let $f$ be a non-zero nilpotent element of $\fing$,
$\Irr{\lam}$  an integrable representation of $\affg$.
Then  $\BRS{\bullet}{\Irr{\lam}}=0$.
\end{Co}
 \begin{Rem}
Corollary \ref{Co:kills-integrable}
also follows from Theorem 
\ref{Th:vanishing-new} and 
\cite[Theorem 3.2]{KacRoaWak03}.
 \end{Rem}
\section{Associated varieties of Kac-Wakimoto
admissible representations
and $C_2$-cofiniteness of  $W$-algebras
}
In this section 
we prove the Feigin-Frenkel conjecture
on the singular support of $G$-integrable
{\em admissible} representations 
and
determine their  associated varieties.
As a consequence 
we 
 prove the $C_2$-cofiniteness for a large number of
$W$-algebras including all  exceptional
$W$-algebras
(Theorem \ref{Th:C_2-cofiniteness-of-modules-over-W-algebras}
and Theorem \ref{Th:exceptionals-are-C2}).
In this section $\fing^*$ is often identified with $\fing$
and $X_M$ is identified with its underlying topological space.

\subsection{Kac-Wakimoto admissible representations}
\label{section:admissible-representations}
A subset 
$\Delta'$ of $\wh \Delta^{\on{re}}$
is called a {\em subroot system}
if $s_{\alpha}(\beta)\in \Delta'$
for any $\alpha,\beta\in \Delta'$.
For a subroot system $\Delta'$,
$\Delta_+'=\Delta'\cap \Delta^{\on{re}}_+$
is a set of positive root and
 $\Pi'=\{\alpha\in \Delta'_+;
s_{\alpha}(\Delta'_+\backslash\{\alpha\})
\subset \Delta'_+\}$
is the set of simple roots
(\cite{MooPia95,KasTan98}).

For $\lam\in \dual{\affh}$,
let 
$\wh \Delta(\lam)$ 
be the associated  {\em integral root system} defined by
\begin{align*}
\wh{\Delta}(\lam)=\{\alpha\in \wh{\Delta}^{\on{re}};
\bra \lam+\wh\rho,\alpha\che\ket \in \Z\}.
\end{align*}
Then $\wh \Delta(\lam)$ 
is a subroot system of $\wh \Delta^{\on{re}}$.
Let $\wh \Delta(\lam)_+$
and $\wh \Pi(\lam)$
be the sets of positive roots and simple roots
of $\wh\Delta(\lam)$ respectively.
The  subgroup $\affW(\lam)
=\bra s_{\alpha};
\alpha\in \wh\Delta(\lam)\ket$ 
of $\affW$
is  called the {\em integral Weyl group} of $\lam$.

A weight $\lam\in \dual{\affh}$
is called {\em admissible}
\cite{KacWak89}
if 
\begin{enumerate}
 \item $\lam$ is regular dominant,
that is, $\bra  \lam+\wh\rho,\alpha\che\ket \not\in \{0,-1,-2,\dots,\}$
for any $\alpha\in \Delta^{\on{re}}_+$,
\item $\Q\wh{  \Delta}(\lam)=\Q \Delta^{\on{re}}$.
\end{enumerate}
Note that an integral dominant weight of $\affg$ is admissible.

For $\lam\in \dual{\affh}$,
the irreducible highest weight representation
$\Irr{\lam}$ 
is called an {\em admissible
representation}
if 
$\lam$ is admissible.
The condition (i) implies \cite{KacWak88}  that
the formal character $\ch \Irr{\lam}$
 of $\Irr{\lam}$ is given by
\begin{align}
 \ch \Irr{\lam}=\sum_{w\in \affW(\lam)}(-1)^{\ell_{\lam}(w)}
e^{w\circ \lam}
\prod_{\alpha\in \wh \Delta_+}
(1-e^{-\alpha})^{-\dim \affg_{\alpha}},
\label{eq:ch-of-admissible-modules}
\end{align}
where 
$\ell_{\lam}:\affW(\lam)\ra \Z_{\geq 0}$
is the length function
and $\affg_{\alpha}$ is the root space of root $\alpha$.


\subsection{$G$-integrable admissible representations}
\label{subsection:$G$-integrable admissible representations}
Set
\begin{align*}
 \Prp^k=\{\lam\in \affP_+^k; \lam\text{ is admissible} \},
\quad \Prp=\bigcup_{k\in \C}\Prp^k,
\end{align*}
where $\affP_+^k$ is defined in \eqref{eq:p+}.

A complex number $k$
is called {\em admissible} for $\affg$
if the weight $k\Lam_0$ is  admissible.
The following assertion is easy to see (cf.\ Proposition \ref{Pro:description-of-admissble-weights}).
\begin{Lem}\label{Lem:admissibe-module-in-the-category-KL}
The set  $\Prp^k$ is non-empty if and only if 
 $k$ is admissible.
\end{Lem}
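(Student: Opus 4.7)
The $(\Leftarrow)$ direction is immediate: if $k$ is admissible, then $k\Lam_0$ is admissible by definition, and $\overline{k\Lam_0}=0\in P^+$ gives $k\Lam_0\in\affP_+^k$, hence $k\Lam_0\in\Prp^k$.

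For the $(\Rightarrow)$ direction, pick any $\lam\in\Prp^k$; the plan is to show that $k\Lam_0$ satisfies the two conditions (i), (ii) of admissibility. The first step is to prove $\wh\Delta(k\Lam_0)=\wh\Delta(\lam)$. Every real affine root has the form $\alpha+n\delta$ with $\alpha\in\Delta$, $n\in\Z$, and for any weight $\mu\in\dual{\affh}$ of level $k$,
\[
\bra\mu+\wh\rho,(\alpha+n\delta)\che\ket=(\bar\mu+\rho)(\alpha\che)+n\tfrac{2(k+h\che)}{(\alpha|\alpha)}.
\]
Since $\bar\lam\in P^+$ implies $\bar\lam(\alpha\che)\in\Z$ for every $\alpha\in\Delta$, the $\lam$- and $k\Lam_0$-pairings of a given real root differ by an integer, so the integrality conditions defining $\wh\Delta(\lam)$ and $\wh\Delta(k\Lam_0)$ coincide. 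Condition (ii) for $k\Lam_0$ then follows at once from the corresponding condition for $\lam$.

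Next I would verify regular dominance of $k\Lam_0$. For a finite positive root $\alpha\in\Delta_+$ the pairing is $\rho(\alpha\che)\in\Z_{>0}$, so only affine positive real roots $\alpha+n\delta$ with $\alpha\in\Delta$, $n\geq 1$, require checking. Fix such a root; WLOG $\alpha\in\Delta_+$, and we may assume $\alpha+n\delta\in\wh\Delta(k\Lam_0)$ (otherwise the $k\Lam_0$-pairing is non-integer and automatically not in $\Z_{\leq 0}$). Set $r=\rho(\alpha\che)\in\Z_{>0}$, $m=\bar\lam(\alpha\che)\in\Z_{\geq 0}$, and $L=n\cdot 2(k+h\che)/(\alpha|\alpha)\in\Z$. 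The pairings with the two positive affine real roots $\alpha+n\delta$ and $-\alpha+n\delta$ are
\[
\begin{array}{c|cc}
 & \lam & k\Lam_0 \\ \hline
 \alpha+n\delta & m+r+L & r+L \\
 -\alpha+n\delta & -m-r+L & -r+L.
\end{array}
\]
Assume for contradiction that one of the two $k\Lam_0$-pairings lies in $\Z_{\leq 0}$. If $r+L=-j$ with $j\geq 0$, then the $\lam$-pairing of $-\alpha+n\delta$ equals $-m-r+L=-m-2r-j\leq -2$; if instead $-r+L=-j$ with $j\geq 0$, then the $\lam$-pairing of $-\alpha+n\delta$ equals $-m-r+L=-m-j\leq 0$. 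Either way $\lam$ fails to be regular dominant on the positive affine real root $-\alpha+n\delta\in\wh\Delta(\lam)$, a contradiction. Hence condition (i) for $k\Lam_0$ holds.

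The argument is essentially elementary; the only real trick is using the companion root $-\alpha+n\delta$ to transfer the obstruction from $k\Lam_0$ back to $\lam$. No deeper representation-theoretic input is required.
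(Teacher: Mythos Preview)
Your proof is correct. The paper does not actually supply a proof of this lemma; it simply states that the assertion ``is easy to see'' and moves on. Your argument is precisely the kind of direct elementary verification the author presumably had in mind: the $(\Leftarrow)$ direction is trivial since $k\Lambda_0\in\Prp^k$, and for $(\Rightarrow)$ you correctly observe that integrality of $\bar\lam$ forces $\wh\Delta(\lam)=\wh\Delta(k\Lambda_0)$ (giving condition (ii) immediately), and then transfer regular dominance from $\lam$ to $k\Lambda_0$ by comparing the pairings at the companion roots $\pm\alpha+n\delta$. The bookkeeping in your table is accurate, and the contradiction in each case is valid since regular dominance is required on \emph{all} positive real roots, not only those in the integral root system. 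Nothing is missing.
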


The admissible numbers 
are classified in \cite{KacWak89,KacWak08}.
Set
\begin{align*}
&\A= \A_{\fing}= \bigsqcup_{q\in \N\atop
 (q,r\che)=1}\A[q],\quad
{}^L\A={}^L\A_{\fing}= 
\bigsqcup_{q\in \N}
{}^L\A[q],
\end{align*}
where
\begin{align*}
& \A[q]\teigi \{-h\che+\frac{p}{q};p,q\in \N,\ (p,q)=1,\
p\geq h\che\},\\
&{}^L\A[q]\teigi \{-h\che+\frac{p}{r\che q};p,q\in \N,\
 (p,q)=1,\
(p,r\che)=1,\
p\geq h_{\fing}\}.
\end{align*}
Here
$r\che$ is the lacety of $\fing$
and
$h_{\fing}$
 is the Coxeter number of $\fing$.
\begin{table}
\begin{center}
\begin{tabular}{|l|l|l|l|l|}
\hline 
$\fing$  & $h_{\fing}$ & $h^{\vee}$&${}^L h\che$& $r\che$ 
\\ \hline
$A_{l}$ & $l+1$ & $l+1$ & $l+1$ & $1$
\\
$B_{l}$ & $2l$ & $2l-1$ &$l+1$&$2$
 \\
$C_{l}$ &  $2l$ & $l+1$ &$2l-1$ &$2$
 \\
$D_{l}$ & $2l-2$ & $2l-2$ & $2l-2$ &$1$
\\
$E_{6}$ & 12 & 12 & $12$ &$1$ 
 \\
$E_{7}$  & 18 & 18 & 18 &1 
\\
$E_{8}$  & 30 & 30 &30&1
 \\
$F_{4}$ & 12 & 9 &9&2 
\\
$G_{2}$  & 6 & 4&4&3 
\\
\hline
\end{tabular}
\end{center}
\caption{}
\label{Coxeter}
\end{table}

\begin{Pro}[\cite{KacWak89,KacWak08}]
\label{Pro:description-of-admissble-weights}
 \begin{enumerate}
  \item 
A complex number $k$ is admissible 
if and only if
$k\in \A \cup {}^L\A $.
\item 
Let $k\in \A [q]$ with $q\in \N$,
$(q,r\che)=1$.
Then,
for  $\lam \in \Prp^k$,
$
\wh\Delta(\lam)=\wh \Delta(k\Lam_0)=\{\alpha+nq\delta;
\alpha\in \Delta, n\in \Z\}
$,
$\wh \Pi(\lam)=\{-\theta+q\delta,
\alpha_1,\dots,\alpha_{l}\}$. 

\item 
 Let $k\in {}^L\A[q]$ with $q\in \N$.
Then,
for  $\lam \in \Prp^k$,
we have
$
\wh \Delta(\lam)
=\wh \Delta(k\Lam_0)
=
 \{\alpha+r\che nq\delta;
\alpha\in \Delta_{long}, n\in \Z\}
\sqcup \{\alpha+nq\delta;\alpha\in \Delta_{short},n\in\Z\}$,
$\wh \Pi(\lam)=\{-\theta_s+q\delta,\alpha_1,\dots,\alpha_{l }\}$.
Here  $\Delta_{long}$ and $\Delta_{short}$
are the sets of long roots and  short roots of $\fing$,
respectively.

 \end{enumerate}
\end{Pro}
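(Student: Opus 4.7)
The plan is to reduce the whole statement to a single explicit computation of $\wh\Delta(k\Lam_0)$ and extract both the classification (i) and the descriptions (ii), (iii) from it. First, for any $\lam \in \Prp^k$ write $\lam = k\Lam_0 + \bar\lam + \lam(D)\delta$ with $\bar\lam \in P^+$. Because $\bar\lam(\alpha\che) \in \Z$ for every $\alpha \in \Delta$ and because $\delta$ vanishes on the affine coroot $(\alpha + n\delta)\che$, the integrality condition $\bra \lam + \wh\rho, \beta\che\ket \in \Z$ at a real root $\beta = \alpha + n\delta$ differs from the same condition for $k\Lam_0$ only by the integer $\bar\lam(\alpha\che)$. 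Hence $\wh\Delta(\lam) = \wh\Delta(k\Lam_0)$, reducing (ii), (iii) to the vacuum case.

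Next, use $\wh\rho = \rho + h\che \Lam_0$ together with the affine coroot formula $(\alpha + n\delta)\che = \alpha\che + (2n/(\alpha|\alpha))K$ to obtain
\[
\bra k\Lam_0 + \wh\rho, (\alpha + n\delta)\che\ket = \rho(\alpha\che) + \frac{2n}{(\alpha|\alpha)}(k + h\che).
\]
Writing $k + h\che = p/q$ in lowest terms with $p, q > 0$, the integrality condition becomes $q \mid n$ for long $\alpha$ and $q \mid n r\che$ for short $\alpha$. The rational-span requirement $\Q\wh\Delta(\lam) = \Q \Delta^{\on{re}}$ in the definition of admissibility forces both long and short roots to contribute, and yields the dichotomy: if $(q, r\che) = 1$, both conditions collapse to $q \mid n$, giving the uniform description of (ii) and placing $k \in \A[q]$; if $(q, r\che) > 1$, then $r\che \mid q$ since $r\che$ is prime in every nontrivial case, so write $q = r\che q'$ to obtain the split long/short description of (iii) with $k \in {}^L\A[q']$. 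This proves (i) and the set-theoretic content of (ii), (iii).

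For regular dominance and the simple-root identification, note that $\bra \lam + \wh\rho, \alpha\che\ket = \bar\lam(\alpha\che) + \rho(\alpha\che) \geq 1$ automatically for $\alpha \in \Delta_+$, since $\bar\lam \in P^+$. The binding constraint therefore arises from the lowest new positive integral real root $-\theta + q\delta$ in case (ii), respectively $-\theta_s + q\delta$ in case (iii); direct computation of the pairing with the corresponding coroot gives $p - (h\che - 1)$ and $p - (h_{\fing} - 1)$ respectively, using $\rho(\theta\che) = h\che - 1$ and $\rho(\theta_s\che) = h_{\fing} - 1$, so regular dominance is equivalent to $p \geq h\che$, respectively $p \geq h_{\fing}$, matching the defining inequalities of $\A[q]$ and ${}^L\A[q]$. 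The stated simple-root set is then identified by applying the Moody--Pianzola criterion $\wh\Pi(\lam) = \{\beta \in \wh\Delta(\lam)_+ : s_\beta(\wh\Delta(\lam)_+\setminus\{\beta\}) \subset \wh\Delta(\lam)_+\}$, which is checked directly for each of the $\ell + 1$ candidate roots; the resulting Dynkin diagram of the subsystem is that of the untwisted affine root system of type $\fing$ in case (ii), and of the dual (twisted) affine type in case (iii).

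The main obstacle lies in correctly tracking the long/short asymmetry in the affine coroot formula, so that the ${}^L\A$ parametrization with the extra factor of $r\che$ in the denominator arises naturally precisely when $(q, r\che) > 1$, and in matching the resulting subroot system with the correct (untwisted vs.\ dual-twisted) affine type. Once those identifications are in place, every remaining step is a routine calculation with the affine root data.
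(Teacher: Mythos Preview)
The paper does not give a proof of this proposition: it is stated with a citation to \cite{KacWak89,KacWak08} and used as a known input. So there is no ``paper's proof'' to compare against; you have supplied a self-contained argument where the paper simply quotes the literature.

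Your argument is essentially correct. The reduction $\wh\Delta(\lam)=\wh\Delta(k\Lam_0)$ via $\bar\lam\in P^+$ is right, the explicit pairing
\[
\bra k\Lam_0+\wh\rho,(\alpha+n\delta)\che\ket=\rho(\alpha\che)+\frac{2n}{(\alpha|\alpha)}(k+h\che)
\]
immediately yields the long/short dichotomy, and your identification of the binding inequalities $p\geq h\che$ and $p\geq h_{\fing}$ from $-\theta+q\delta$ and $-\theta_s+q\delta$ is exactly the computation underlying the Kac--Wakimoto classification. Two small points you leave implicit that are worth one sentence each: first, rationality of $k+h\che$ is forced by the span condition (ii) in the definition of admissibility (otherwise $\wh\Delta(k\Lam_0)=\Delta$ and $\delta\notin\Q\wh\Delta(k\Lam_0)$), while strict positivity $k+h\che>0$ follows from regular dominance applied to $-\theta+n\delta$ for $n\gg 0$; second, your phrase ``forces both long and short roots to contribute'' is slightly off, since all of $\Delta$ is already in $\wh\Delta(k\Lam_0)$ at $n=0$ --- what the span condition actually forces is the presence of some root with nonzero $\delta$-part, i.e.\ the rationality just mentioned. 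Neither of these affects the validity of your outline.
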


\begin{Rem}
\begin{enumerate}
\item We have $\A[1]=\Z_{\geq 0}$
and $Adm^k$ for $k\in \Z_{\geq 0}$ is 
the set of integrable dominant weights of level $k$.
 \item
Let $k\in \A$, $\lam\in\Prp^k$.
Then $\wh\Delta(\lam)\cong \wh \Delta^{\on{re}}$
as root systems,
that is, 
 $\lam$ is a {\em principal admissible weight} \cite{KacWak89}.
\item
Let $k\in {}^L\A$, $\lam\in\Prp^k$.
Then $\wh \Delta(\lam)\che$
is isomorphic to the root system
of $\widehat{({}^L\fing)}$.
\item
The assignment
\begin{align*}
k\mapsto {}^L k:=\frac{1}{r\che(k+h\che )}-
{}^L h\che
\end{align*}gives
 bijections
$\A_\fing\isomap {}^L\A_{{^L}\fing}$
and
${}^L\A_\fing\isomap \A_{{^L}\fing}$,
where ${}^L \fing$ is the Langlands dual Lie algebra
of $\fing$ and
   ${}^Lh\che$ is the dual Coxeter number of 
${}^L \fing$.
\end{enumerate}
\end{Rem}

\subsection{The Feigin-Frenkel conjecture}
The following assertion was conjectured by 
Feigin and Frenkel (unpublished).
\begin{Th}
\label{Th:Conj:Feigin-Frenkel}
Let $\lam\in \Prp$.
Then $SS(\Irr{\lam})\subset \Nil_{\infty}$,
or equivalently,
$X_{\Irr{\lam}}\subset \Nil$.
 \end{Th}

\begin{Rem}
If  $\Irr{\lam}$  is an integrable representation of $\affg$
then
 $X_{\Irr{\lam}}
=\{0\}$ (see Proposition \ref{Pro:when-contains-minimal-orbit}),
and
we have obviously that
$X_{\Irr{\lam}}\subset \Nil$.
\end{Rem}
For 
$\fing=\mf{sl}_2$,
Theorem \ref{Th:Conj:Feigin-Frenkel}
has been proved by Feigin and Malikov \cite{FeiMal97}.
Below we reproduce their proof for completeness in the case that $\lam=k\Lam_0$.

\begin{Th}[Feigin and Malikov \cite{FeiMal97}]
\label{th:Feing-Malikov}
Let $\fing=\mf{sl}_2$,
and let $k$ be an admissible number for
 $\affg=\widehat{\mathfrak{sl}}_2$.
Then $X_{\Irr{\lam}}\subset \Nil$.
\end{Th}
\begin{proof}
Since
$V_k(\fing)$ is a quotient of $\Vg{k}$,
$R_{V_k(\fing)}$ is a quotient of
$R_{\Vg{k}}=\C[\fing^*]$ by 
 the image $I$ of the maximal ideal
of $\Vg{k}$
in $R_{\Vg{k}}$.
Let $k\in \A[q]$
and
write $k+2=p/q$ with $p\in \N$.
It is known \cite{KacWak88} that
the maximal ideal 
of $\Vg{k}$ is generated by a
singular vector,
 say $v$,
of weight $s_{-\theta+q\delta}\circ k\Lam_0$.
The projection formula \cite{MalFeuFuk86}
of
$v$ implies that
its image $\bar v$ in $I\subset R_{\Vg{k}}=\C[\fing^*]$ is non-zero.
Since $\bar v$
 is a singular vector 
of the $\ad \fing$-module $\C[\fing^*]$
of weight $2(p-1)\alpha$  and degree $(p-1)q$,
Kostant's Separation Theorem implies that
$\bar v$ must  
be
 equals to
$\Omega^{\frac{(p-1)(q-1)}{2}} e^{p-1}$ up to
 multiplication by a nonzero constant,
where $\Omega=\frac{1}{2}h^2+2 ef$.
It follows that  $I$ contains some power of $\Omega$,
and hence,
$X_{V_k(\fing)}\subset \Nil$.
\end{proof}

 \begin{Rem}
 The condition that
$V_k(\fing)\subset \Nil$ is equivalent to that
$V_k(\fing)$ is $C_2$-cofinite as a $\Q$-graded vertex algebra
in the sense of \cite{DonLiMas97}.
\end{Rem}

 \subsection{Proof of Theorem \ref{Th:Conj:Feigin-Frenkel}}

\begin{Lem}\label{Lem:enught-to-consider-the-vacuum}
Suppose that
the Feigin-Frenkel conjecture holds for $V_k(\fing)$,
that is,
$X_{V_k(\fing)}\subset \Nil$.
Then  
for a finitely strongly generated
$V_k(\fing)$-module $M$ 
we have
 $X_M\subset \Nil$.
\end{Lem}\begin{proof}
	  Clear since
$\bar M=M/F^1 M$  is a finitely generated
$R_{V_k(\fing)}$-module.
	 \end{proof}
\begin{Th}[\cite{MalFre99}, see also \cite{FreMal97}]\label{Th:Frenkel-Malikov}
 Let $k$ be an admissible number,
$\lam\in Adm_+^k$.
Then $L(\lam)$ is a $V_k(\fing)$-module.
\end{Th}

\begin{Co}\label{Co:vacuume-case-is-enough}
 Let $k$ be an admissible number.
If $X_{V_k(\fing)}\subset \Nil$
then 
$X_{\Irr{\lam}}\subset \Nil$
for all $\lam\in Adm_+^k$.
\end{Co}

Let $\{e_i,h_i,f_i; i=1,\dots,l \}$
be Chevalley generators of $\fing$.
Denote by $\mf{sl}_2^{(i)}$
the copy of
$\mf{sl}_2$
in  $\fing$ spanned
 by $e_i$, $h_i$
and $f_i$.
Let $\finp^{(i)}$ be the parabolic subalgebra 
of $\fing$ spanned by $\finb$ and $f_i$,
 $\fink^{(i)}$ its Levi subalgebra,
and $\finr^{(i)}$ its nilradical.
Thus,
$\fink^{(i)}$ is equal to the direct sum of $\mf{sl}_2^{(i)}$
and the orthogonal complement $\finh_i^{\bot}$ of $\C h_i$
in $\finh$.

In order to reduce
the proof of Theorem \ref{Th:Conj:Feigin-Frenkel}
to the $\mf{sl}_2$-case 
we 
 shall consider
the {\em semi-infinite restriction}
of $V_k(\fing)$,
which is by definition 
 the semi-infinite
cohomology space
$H^{\semiinf+\bullet}(\finr^{(i)}[t,t\inv], V_k(\fing))$.

For $k\ne -h\che$,
define $k_i\in \C$ by
\begin{align*}
 k_i+2=
\frac{2}{(\alpha_i|\alpha_i)}
(k+h\che).
\end{align*}
Note that
if $k$ is an admissible number for $\affg$,
then $k_i$ is an admissible number for $\widehat{\mf{sl}}_2$.
The space  $H^{\semiinf+\bullet}(\finr^{(i)}[t,t\inv], V_k(\fing))$
 is naturally a vertex algebra,
and 
there is a
natural  
vertex algebra homomorphism
\begin{align}
 V^{k_i}(\mf{sl}_2^{(i)})
\ra H^{\semiinf+0}(\finr^{(i)}[t,t\inv],
V_k(\fing)
),
\label{eq:va-homo-sl2}
\end{align}
which sends the vacuum vector
of $ V^{k_i}(\mf{sl}_2^{(i)})$ to the image of the vacuum vector
of $V_k(\fing)
$ in $H^{\semiinf+0}(\finr^{(i)}[t,t\inv],
V_k(\fing)
)$, see e.g. \cite{HosTsu91}.

\begin{Th}[{\cite[Theorem 7.5]{A-BGG}}]\label{Th:reduction-to-sl_2}
Let $k$ be an admissible number,
$i=1,\dots,l$.
The map 
\eqref{eq:va-homo-sl2}
 factors through the embedding
\begin{align*}
V_{k_i}({\mf{sl}}_2^{(i)})
\hookrightarrow
H^{\semiinf+0}(\finr^{(i)}[t,t\inv],
V_k(\fing)
),
\end{align*}where 
$V_{k_i}({\mf{sl}}_2^{(i)})
$ denotes the unique simple quotient of
$V^{k_i}({sl}_2^{(i)})$
(which is isomorphic to an admissible representation of 
$\wh{\mf{sl}_2}^{(i)}$).
\end{Th}

 \begin{proof}[Proof of Theorem \ref{Th:Conj:Feigin-Frenkel}]
By Corollary
\ref{Co:vacuume-case-is-enough}
it is sufficient to prove the assertion  for $\lam=k\Lam_0$.

Let $(C^{\bullet},d)$ denote 
 Feigin's standard complex for calculating
the semi-infinite cohomology
$H^{\semiinf+\bullet}(\finr^{(i)}[t,t\inv], V_k(\fing))$.
The space $C^{\bullet}$ is naturally a vertex algebra.
We have
\begin{align*}
 R_{C^{\bullet}}
=R_{V_k(\fing)}\* \bigwedge\nolimits^{\bullet}_{op} \finr^{(i)}\* 
\bigwedge\nolimits^{\bullet} (\finr^{(i)})^*,
\end{align*}
and
$d$ induces the
differential $\bar d=d_{(0)}$ on 
$R_{C^{\bullet}}$.
We have the natural Poisson algebra homomorphism
\begin{align*}
 R_{H(C^{\bullet})}\ra H(R_{C^{\bullet}})=H(R_{C^{\bullet}},\bar d).
\end{align*}
On the other hand
the embedding
$V_{k_i}(\mf{sl}_2^{(i)})
\hookrightarrow
 H(C^{\bullet})$ in Theorem \ref{Th:reduction-to-sl_2}
induces the
homomorphism 
\begin{align*}
R_{V_{k_i}(\mf{sl}_2^{(i)})}
\ra R_{H(C^{\bullet})}
\end{align*}
 of 
Poisson algebras.
By composing the above two maps we obtain the Poisson algebra homomorphism
\begin{align*}
\Phi: R_{V_{k_i}(\mf{sl}_2^{(i)})}
\ra H(R_{C^{\bullet}}).
\end{align*}

By Theorem \ref{th:Feing-Malikov}
we have
$X_{V_{k_i}(\mf{sl}_2^{(i)})}\subset \Nil$,
or equivalently,
$\Omega_i:=\frac{1}{2}h_i^2+2 e_i f_i$  is nilpotent in
$ R_{V_{k_i}}(\mf{sl}_2^{(i)})$.
Therefore
$\Phi(\Omega_i)$ is nilpotent in $H(R_{C^\bullet})$.

Now by definition we have 
\begin{align*}
&\bar d R_{C^{\bullet}}\subset  (\finr_i\* \C \* \C)R_{C^{\bullet}}
+ (\C \* \finr^{(i)}\* \C)R_{C^{\bullet}}+
 (\C \* \C\*  ( \finr^{(i)})^*)R_{C^{\bullet}},\\
& \Phi(x)\equiv x\* 1\* 1\pmod{(\C \* \finr^{(i)}\* \C)R_{C^{\bullet}}+
 (\C \* \C\* (\finr^{(i)})^*)R_{C^{\bullet}}
+\bar d R_{C^{\bullet}}}\quad\text{for }x\in \mf{sl}_2^{(i)},
      \end{align*}
see
 \cite[2.15]{HosTsu91}.
Hence the
nilpotency 
of $\Phi(\Omega_i)$ implies that
\begin{align*}
 h_i^{N}\equiv 0\pmod{\bigoplus_{\alpha\in \Delta}\fing_{\alpha}R_{V_k(\fing)}}
\quad \text{ in }R_{V_k(\fing)}
\end{align*}
for a sufficiently large $N$.
This gives that
$\lam(h_i)=0$
for
$\lam\in X_{V_k(\fing)}\cap \finh^*$,
 $i=1,\dots, l$,
and hence,
$X_{V_k(\fing)}
\cap \finh^*=\{0\}$.
As 
$X_{V_k(\fing)}$
 is $G$-invariant
and closed,
we obtain
$X_{V_k(\fing)}\subset \Nil$ as required.
\end{proof}

\subsection{A character formula of $\BRS{0}{\Irr{\lam}}$}
We now wish to
determine the
variety
$X_{L(\lam)}$ for $\lam\in Adm^k_+$.
By Theorem \ref{Th:Conj:Feigin-Frenkel},
$X_{\Irr{\lam}}$ is a finite union of nilpotent orbits
of $\fing$.
Hence,
in view of Proposition \ref{Pro:condition-to-be-nonzero},
we have only to know for which
nilpotent element $f$  the cohomology
$\BRS{0}{L(\lam)}$ is nonzero.

We assume that the grading
(\ref{eq:good-grading})
is 
 Dynkin.
Set
\begin{align*}
 \ch_q \BRS{0}{\Irr{\lam}}
=\sum_{d\in \C}
q^d \dim \BRS{0}{\Irr{\lam}}_d.
\end{align*}
By Corollary \ref{Co:finite-dimensionality},
$\ch_q \BRS{0}{\Irr{\lam}}$ is well-defined.

For $\lam\in \Prp^k$,
set
\begin{align*}
\wh\Delta_f(\lam) &=\{\alpha\in \wh{\Delta}(\lam);
\alpha(D+\x) =0\}.
\end{align*}
By Proposition \ref{Pro:description-of-admissble-weights}
we have
\begin{align*}
\wh\Delta_f(\lam) =
\begin{cases}
\{\alpha-\alpha(\x)\delta; \alpha\in \Delta,\
\alpha(\x)\equiv 0 \pmod{q}\}&\text{if $k\in \A[q]$},\\
\begin{array}{l}
\{\alpha-\alpha(\x)\delta;\alpha\in \Delta_{long},\
\alpha(x_0)\equiv 0 \pmod{r\che q}\}\\
\quad \sqcup 
\{\alpha-\alpha(\x)\delta;\alpha\in \Delta_{short},\
\alpha(x_0)\equiv 0 \pmod{q}\}
\end{array}
&\text{if $k\in {}^L\A[q]$.}
\end{cases}
\end{align*}
It follows that $\wh\Delta_f(\lam)  $ is a finite subroot system of 
$\wh{\Delta}^{\on{re}}$ containing 
$\Delta_0$ (recall \eqref{eq:dec-of-roots}).
Let $\wh W_f(\lam)=\bra s_{\alpha};
\alpha\in \wh{\Delta}_f(\lam)\ket
\subset \affW$.
\begin{Pro}\label{Pro:criterion-nonzer-character}
 Let $\lam \in \Prp$.
\begin{enumerate}
 \item Suppose that
$\wh\Delta_f(\lam) \supsetneq \Delta_0$.
Then $\BRS{0}{\Irr{\lam}}= 0$. 
\item 
Suppose that
$\wh\Delta_f(\lam) =
\Delta_0$.
Then
\begin{align*}
& \ch_q \BRS{0}{\Irr{\lam}}
=
 \frac{\sum\limits_{y\in W^0(\lam)}(-1)^{\ell_{\lam}(y)}
\prod\limits_{\alpha\in\Delta_{0,+}}\frac{\bra y(\lam+ \rho),\alpha\che\ket}{\bra  \rho,\alpha\che\ket}
q^{-\bra y\circ \lam,D+\x\ket}}{
\prod\limits_{j\geq 1}(1-q^j)^{l +\#\Delta_{0}}
\prod\limits_{
j\geq 0}(1-q^{\frac{1}{2}+j})^{\# \Delta_{\frac{1}{2}}}},
\end{align*}
where $W^0(\lam)$ is a representative of the coset
$W_0\backslash \affW(\lam)$.
In particular,
$\BRS{0}{\Irr{\lam}}$ is nonzero and
the dimension of the top weight space
of $\BRS{0}{\Irr{\lam}}$
 is given by
\begin{align*}
 \dim  \BRS{0}{\Irr{\lam}}_{-\bra \lam,D+x_0\ket}=\prod_{\alpha\in\Delta_{0,+}}
\frac{\bra \lam+\rho,\alpha\che\ket}{\bra \rho\che,\alpha\che\ket}.
\end{align*}
\end{enumerate}\end{Pro}
\begin{proof}
Let $\ch C^i(\Irr{\lam})$ be the formal character of
the $\affh$-module $C^i(\Irr{\lam})$:
$\ch C^i(\Irr{\lam})=\sum_{\mu}e^{\mu} \dim C^i(\Irr{\lam})^{\mu}$,
where 
$ C^i(\Irr{\lam})^{\mu}$ is the weight space of weight $\mu$.
We have
\begin{align*}
\sum_{i\in \Z} (-1)^iz^i\ch C^i(\Irr{\lam})=\ch \Irr{\lam}\times
\frac{\prod\limits_{\alpha\in \Delta_{> 0}
\atop n\geq 0}(1+z e^{-\alpha+n\delta})\prod\limits_{\alpha\in \Delta_{> 0}
\atop n\geq 1}(1+z\inv e^{\alpha+n\delta})}{
\prod\limits_{\alpha\in \Delta_{1/2}\atop n\geq 1}
(1-e^{-\alpha+n\delta}),}\nonumber\\
= \frac{\sum\limits_{w\in\affW(\lam)}(-1)^{\ell_{\lam}(w)}e^{w\circ \lam}\prod\limits_{\alpha\in \Delta_{> 0}
\atop n\geq 0}(1+z e^{-\alpha+n\delta})\prod\limits_{\alpha\in \Delta_{> 0}
\atop n\geq 1}(1+z\inv e^{\alpha+n\delta})
}{\prod_{i=1}^\infty(1-q^j)^l\prod\limits_{\alpha\in
 \wh{\Delta}_+^{re}}(1-e^{-\alpha})
\prod\limits_{\alpha\in \Delta_{1/2}\atop n\geq 1}
(1-e^{-\alpha+n\delta})}.
\end{align*}
where $\Delta_{>0}=\bigsqcup_{i>0}\Delta_i$,
see  \cite{KacRoaWak03,Ara05}.

Let $\rho_1\che=\frac{1}{2}\sum_{\alpha\in \wh\Delta_f(\lam) }\alpha\che$.
Since
$\BRS{i}{\Irr{\lam}}=0$ for $i\ne 0$
by Theorem \ref{Th:vanishing-new},
 the Euler-Poincar\'{e} principle
gives that
\begin{align*}
& \ch_q \BRS{0}{\Irr{\lam}}\\
&=\lim_{t\rightarrow 0}
\left(\sum_{w\in \affW(\lam)}(-1)^{\ell_\lam(w)}q^{-
\bra w\circ\lam,D+\x+t \rho_1\che\ket}
\prod_{\alpha\in \Delta_{0,+}
}(1-q^{t
\bra \alpha,\rho_1\che \ket })\inv 
\right)\\
& \quad \times 
\prod_{j\geq 1}(1-q^j)^{-l-\#\Delta_{0}}
\prod_{
j\geq 0}(1-q^{\frac{1}{2}+j})^{-\# \Delta_{\frac{1}{2}}}.
\end{align*}

Choose a  representative $\wh W^f(\lam)$ of 
the coset $\wh W_f(\lam) \backslash \affW(\lam)$.
We have
\begin{align*}
 &\sum_{w\in \affW(\lam)}(-1)^{\ell_{\lam}(w)}
q^{-\bra w\circ \lam,D+\x+t \rho_1\che\ket}\\
&= \sum_{u\in \affW_f(\lam)}\sum_{y\in \affW^f(\lam)}(-1)^{\ell_{\lam}(uy)}
q^{-\bra uy\circ \lam,D+\x\ket}q^{-t\bra uy\circ \lam,\rho_1\che\ket}
\\&=
\sum_{y\in \wh W^f(\lam)}(-1)^{\ell_{\lam}(y)}
q^{-\bra y\circ \lam,D+\x\ket}
\sum_{u\in \wh W_f(\lam)}(-1)^{\ell_{\lam}(u)}q^{t\bra uy\circ \lam,\rho\che_1\ket}.
\end{align*}
Let
$\ell_1: \wh W_f(\lam)\ra \Z_{\geq 0}$ be the length function
of the Coxeter group $\affW_f(\lam)$.
Since $(-1)^{\ell_1(s_{\alpha})}=(-1)^{\ell_{\lam}(s_{\alpha})}=-1$ for
 a reflection $s_{\alpha}\in \affW_f(\lam)$,
 $(-1)^{\ell_{\lam}(u)}=(-1)^{\ell_1(u)}$
for $u\in \widehat W_f(\lam)$.
Hence
\begin{align*}
&
\sum_{u\in \wh W_f(\lam)}(-1)^{\ell_{\lam}(u)}q^{t\bra uy\circ
 \lam,\rho\che_1\ket}=
\sum_{u\in \wh W_f(\lam)}(-1)^{\ell_{1}(u)}q^{t\bra uy\circ
 \lam,\rho\che_1\ket}\\
&=
q^{\bra y\circ\lam,t\rho_1\che\ket}
\prod_{\alpha\in\wh \Delta_f(\lam)\cap \wh \Delta_+}(1-q^{t \bra y( \lam+
\rho),\alpha\che\ket}).
\end{align*}
It follows that
\begin{align*}
 &\lim_{t\rightarrow 0}
\frac{\sum\limits_{w\in \affW(\lam)}(-1)^{\ell_\lam(w)}q^{-
\bra w\circ\lam,D+\x+t \rho_1\che\ket}}
{\prod\limits_{\alpha\in \Delta_{0,+}
}(1-q^{t
\bra \alpha,\rho_1\che \ket })}
=\lim_{t\rightarrow 0}\frac{q^{\bra y\circ\lam,t\rho_1\che\ket}
\prod\limits_{\alpha\in\wh \Delta_f(\lam)\cap \wh \Delta_+}(1-q^{t \bra y( \lam+
\rho),\alpha\che\ket})}{\prod\limits_{\alpha\in \Delta_{0,+}
}(1-q^{t
\bra \alpha,\rho_1\che \ket })}\\
&=\begin{cases}
   \prod\limits_{\alpha\in\Delta_{0,+}}\frac{\bra
   y(\lam+\rho),\alpha\che\ket}{\bra \alpha,\rho_1\che\ket}&\text{if }
   \affW_f(\lam)=\Delta_{0},\\
0&\text{otherwise.}
  \end{cases}
\end{align*}
This gives 
 the desired results.
\end{proof}

\subsection{Height and coheight of nilpotent elements}
In this subsection we  continue to assume that
the grading (\ref{eq:good-grading})
is
 Dynkin,
so that $\x=h/2$.
The largest integer $j$ such that $\fing_{j/2}\ne 0$
is called 
 the {\em height}  \cite{Pan99} of the nilpotent element $f$
and denoted by
$\on{ht}(f)$.
Because $\fing_{\theta}\subset \fing_{\on{ht}(f)/2}$,
it follows that
\begin{align*}
 \on{ht}(f)=2 \theta(\x).
\end{align*}
Define 
the {\em coheight} 
 $\on{ht}\che(f)$ of $f$
by
\begin{align}
 \on{ht}\che(f)
=2 \theta_s(\x).
\label{eq:coheight}
\end{align}

If  $\fing$ be a classical Lie algebra
then the nilpotent orbits are parameterized by certain 
partitions of $N$,
where
 $N=l +1$
(respectively $2l  +1$, $2l $, $2l $)
when $\fing$
is of type $A_{l }$
(respectively $B_{l }$, $C_{l }$,
$D_{l }$).
If $X=A$
(respectively $B$, $C$, $D$),
let $\mc{P}_X(N)$ be the set of partitions
of $N$ parameterizing the set
of nilpotent orbits for 
 $A_{l }$
(respectively $B_{l }$, $C_{l }$, $D_{l }$).
A precise description of $\mc{P}_X(N)$
\cite[Theorems 5.1.2-5.1.4]{ColMcG93}
is given as follows.
\begin{itemize}
 \item $\mc{P}_A(N)$: all partition of $N$.
\item $\mc{P}_B(N)$: partitions of $N$ such that even parts occur with
      even 
multiplicity. 
\item $\mc{P}_C(N)$: partitions of $N$ such that odd parts occur with
      even 
multiplicity. 
\item $\mc{P}_D(N)$: partitions of $N$ such that even parts occur with
      even 
multiplicity. 
\end{itemize}
Let $\mb{O}_d$ be the nilpotent orbit corresponding to 
a partition $d$
(when $\fing$ is  of type $D$ and $d$ is a very even partition there are
two orbits
$\mb{O}^I_d$  and $\mb{O}^{II}_d$ corresponding to $d$).
The following assertion can be seen from the formula for
the weighted  Dynkin diagram (see \cite[5.3]{ColMcG93}).
\begin{Pro}\label{Pro:classical}
 Let
$f$ be a nilpotent element corresponding to a partition
$(d_1,d_2,\dots,d_n)$ 
in a classical Lie algebra 
$\fing$.
\begin{enumerate}
 \item (\cite{Pan99})
\begin{enumerate}
 \item If $\fing=\mf{sl}_n$ or $\mf{sp}_{2n}$ then
$\on{ht}(f)=2(d_1-1)$.
\item If  $\fing=\mf{so}_n$ then $\on{ht}(f)=
\begin{cases}
 (d_1+d_2)-2&\text{if $d_2\geq d_1-1$},\\
2(d_1-2)&\text{if $d_2\leq d_1-2$.}
\end{cases}$
\end{enumerate}
\item 
\begin{enumerate}
 \item 
If $\fing=\mf{sp}_{2n}$ then
$\on{ht}\che(f)=
\begin{cases}
(d_1+d_2)-2&\text{if $d_2\geq d_1-1$},\\
2(d_1-2)&\text{if $d_2\leq d_1-2$.}
\end{cases}$
\item If $\fing=\mf{so}_{2n+1}$ then
$\on{ht}\che(f)=d_1-1$.
\end{enumerate}
\end{enumerate}
\end{Pro}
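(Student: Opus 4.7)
The plan is to reduce everything to an eigenvalue computation for the semisimple element $h$ of the Jacobson--Morozov triple $(e,h,f)$ on the defining representation of $\fing$. Since the grading is Dynkin we have $\srho = h/2$, so that $\on{ht}(f) = 2\theta(\srho) = \theta(h)$ and $\on{ht}\che(f) = 2\theta_s(\srho) = \theta_s(h)$; the task is to read these values off from the partition $d=(d_1, d_2, \dots)$ attached to $f$.

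The key input is that $\mathfrak{sl}_2 = \langle e,h,f\rangle$ acts on the standard representation $\C^N$ (with $N = n,\, 2n+1,\, 2n$ in types $A_{n-1},\, B_n,\, C_n/D_n$) with a block decomposition $\bigoplus_i V(d_i)$, on each of which $h$ has eigenvalues $d_i-1,\, d_i-3,\, \dots,\, -(d_i-1)$. After conjugating $h$ into the standard Cartan and labelling the usual basis $\{\epsilon_i\}$ of $\finh^*$ so that $a_i := \epsilon_i(h)$ is the weakly decreasing arrangement of these eigenvalues, one reads off at once
\begin{align*}
a_1 = d_1 - 1, \qquad a_2 = \max\{d_1 - 3,\; d_2 - 1\}.
\end{align*}

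Substituting the standard expressions for the highest short root $\theta_s$ now finishes the proposition. In type $B_n$ one has $\theta_s = \epsilon_1$, giving $\on{ht}\che(f) = a_1 = d_1 - 1$, which is (ii)(b). In type $C_n$ one has $\theta_s = \epsilon_1 + \epsilon_2$, giving
\[
\on{ht}\che(f) = a_1 + a_2 = (d_1 - 1) + \max\{d_1 - 3,\; d_2 - 1\},
\]
which evaluates to $d_1 + d_2 - 2$ when $d_2 - 1 \geq d_1 - 3$ and to $2(d_1 - 2)$ otherwise. The two formulas agree at the boundary $d_2 = d_1 - 2$, so the case split $d_2 \geq d_1 - 1$ vs.\ $d_2 \leq d_1 - 2$ as stated in (ii)(a) is consistent. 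The very same procedure reproves (i) uniformly, with $\theta = \epsilon_1 - \epsilon_n$ in type $A$, $\theta = 2\epsilon_1$ in type $C$, and $\theta = \epsilon_1 + \epsilon_2$ in types $B$ and $D$.

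I do not anticipate a substantial obstacle. The bookkeeping to verify is only that the parity constraints on the partition (odd parts with even multiplicity in type $C$; even parts with even multiplicity in types $B$, $D$) restrict which partitions occur but do not prevent either subcase in (ii)(a) from arising, and that the edge case of a single-part partition is correctly handled by setting $d_2 = 0$: then $d_2 \leq d_1 - 2$, the second formula gives $2(d_1-2) = (d_1-1)+(d_1-3)$, matching the direct eigenvalue count on $V(d_1)$.
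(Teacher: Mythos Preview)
Your proof is correct and essentially the same as what the paper points to: the paper simply says the assertion ``can be seen from the formula for the weighted Dynkin diagram (see \cite[5.3]{ColMcG93})'', and the weighted Dynkin diagram is computed precisely from the $h$-eigenvalues on the standard representation that you write out. You have merely made that reference explicit by evaluating $\theta(h)$ and $\theta_s(h)$ directly in the $\epsilon_i$-basis rather than passing through the simple-root values $\alpha_i(h)$.
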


\subsection{Associated varieties of $G$-integrable  admissible representations}

For $q\in \N$,
set
\begin{align*}
& \Nil_q\teigi \{f\in \Nil; \on{ht}(f)<2q\}\cup \{0\}\subset \Nil,\\
& {}^L\Nil_q\teigi \{f\in \Nil; \on{ht}\che(f)<2q\}\cup \{0\}\subset \Nil.
\end{align*}
Note that
\begin{align*}
&\Nil_q=\{x\in \fing;
(\ad x)^{2q}=0\}\quad
\text{and}\quad
{}^L\Nil_q=\{x\in \fing; \pi_{\theta_s}(x)^{2q}=0\},
\end{align*}
where $\pi_{\theta_s}$
is the irreducible representation of $\fing$
with highest weight $\theta_s$.
Because 
\begin{align}
 \on{ht}(f_{\prin})=2(h_{\fing}-1),
\quad \on{ht}\che(f_{\prin})=2({}^L h\che-1),
\label{eq;height-of-principal-nil}
\end{align}
we have
\begin{align*}
 \Nil_q=\Nil\iff    q\geq h_{\fing},
\quad \text{and}
\quad 
 {}^L\Nil_q=\Nil\iff   q\geq {}^L h\che.
\end{align*}

\begin{Th}\label{Th:main-admissible}
Let $k$ be an admissible number,
 $\lam\in \Prp^k$.
Then 
 \begin{align*}
 X_{\Irr{\lam}}\cong \begin{cases}
	     \Nil_q &\text{if }k\in \A[q]
\text{ with }(q,r\che)=1,
\\ {}^L\Nil_q &\text{if }k\in {}^L\A[q].
	    \end{cases}
\end{align*}
\end{Th}

The rest of this subsection is devoted to the proof of
Theorem \ref{Th:main-admissible}.

\begin{Lem}\label{Lem:case-by-case}
Let $q\in \N$.

 \begin{enumerate}
  \item Suppose that
$\on{ht}(f)\geq 2q$.
Then  there exists a root $\alpha$ such that
$\alpha(\x)=q$.
\item Suppose that 
$\on{ht}\che(f)\geq 2q$.
Then either of the following holds:
\begin{enumerate}
 \item  There exists a short root $\alpha$ such that
$\alpha(\x)=q$.
\item There exists a long root $\alpha$ such that
$\alpha(\x)=r\che q$.
\end{enumerate}
 \end{enumerate}
\end{Lem}
\begin{proof}
 (i)
We need to show that $\fing_q\ne 0$.
If
$\on{ht}(f)$ is even,
that is,
if  $\theta(\x)\in \Z$,
then  the assertion follows from the $\mf{sl}_2$-representation theory.
If $\on{ht}(f)$ is odd,
then
it
 follows from the fact 
 \cite[Proposition 2.4]{Pan99} that
the weighted Dynkin diagram of  $f$
contains no $ 2$'s.

(ii)
We may assume that
 $\fing$ is not simply laced.

Let $\fing=\mf{so}_{2l +1}$.
We assume that
the simple roots of $\fing$ are
labeled as in \cite{ColMcG93},
so that
$\theta_s=\alpha_1+\dots +\alpha_{l }$
and
the positive short roots are of the 
form  $\alpha_i+\alpha_{i+1}\dots +\alpha_l $
with $1\leq i\leq  l $.
We have $\alpha_i(x_0)\in \{0,1/2,1\}$.
Suppose that
there is no short positive root $\alpha$ such that
$\alpha(\x)=q$.
Then
the condition $\theta_s(\x)\geq q$
implies that
there exists 
$i$ such that
$(\alpha_i+\dots +\alpha_{l })(\x)=q+1/2 $
and $\alpha_i(\x)=1$. 
Then  $\beta=\alpha_i+2(\alpha_{i+1}+\dots \alpha_{l })$
is a long root satisfying 
$\beta(\x)=2 q$.

Let $\fing=\mf{sp}_{2l }$.
We show that the condition (a) holds.
Short positive roots are
$e_i\pm e_j$ with $i<j$ in the notation of \cite{ColMcG93}.
Let $(d_1,\dots,d_n)
\in \mc{P}_C(N)$ be the 
partition  corresponding to $f$.
Let $\{h_1,\dots,h_{l }\}$ be 
numbers associated with
$(d_1,\dots,d_n)$
as in \cite[5.3]{ColMcG93},
so that
$(h_i-h_j)/2$ and
$(h_i+h_j)/2$ with $i<j$
are  the values of some positive short roots
at $\x$.
We have $h_1=d_1-1$.

First, suppose  that $d_2\leq d_1-2$.
(Then $d_1$ must be even.)
By Proposition \ref{Pro:classical},
$\theta_s(\x)=d_1-2$.
Because $h_2=h_1-2$ in this case
$(h_1+h_2)/2=d_1-2$.
It follows that 
any non-negative integer which is equals to or less than
$d_1-2$ appears as
the value of some positive short root
at $\x$.
In particular (a) holds.

Next, suppose that $d_2\geq d_1-1$.
Then,  by Proposition \ref{Pro:classical},
 $\theta_s(\x)=(d_1+d_2)/2-1$.
If $d_2=d_1-1$ 
 then
$\theta_s(\x)=d_1-3/2$.
Hence
$q\leq \theta_s(\x)$
implies that
 $q\leq d_1-2$. 
In this case $(h_1+h_3)/2=d_1-2$
and
  the assertion follows 
similarly as above.
If $d_1=d_2$ 
then 
$\theta_s(\x)=d_1-1$.
In this case $(h_1+h_2)/2=d_1-1$
and
again the assertion follows similarly.

For types $G_2$ and $F_4$
one can consult Dynkin's tables of the 
weighted Dandier diagrams
(see \cite[ch.\ 8]{ColMcG93}).
\end{proof}
\begin{Th}\label{Th:H(L(lam))-is-non-zero-if-and-only-if}
\begin{enumerate}
 \item 
Let $k\in \A[q]$
with $(q,r\che)=1$,
$\lam\in \Prp^k$.
We have
$\BRS{0}{\Irr{\lam}}\ne 0$ 
if and only if $\on{ht}(f)<2q$,
or equivalently,
$f\in \Nil_q$.
 \item 
Let $k\in {}^L\A[q]$,
$\lam\in \Prp^k$.
We have
$\BRS{0}{\Irr{\lam}}\ne 0$ 
if and only if $\on{ht}\che(f)<2 q$,
or equivalently,
$f\in {}^L\Nil_q$.

\end{enumerate}
 \end{Th}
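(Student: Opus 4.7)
My plan is to compare $\wh\Delta_f(\lam)$ with $\Delta_0$ and apply Proposition~\ref{Pro:criterion-nonzer-character}: the height bound in each case will turn out to be equivalent to the equality $\wh\Delta_f(\lam) = \Delta_0$, and under that equality one must also check that the resulting character formula is a nonzero formal series. For the only-if direction, suppose the height bound fails. Proposition~\ref{Pro:case-by-case} then supplies either a root $\alpha \in \Delta$ with $\alpha(\x) = q$ (in case (i)), or a short root $\alpha$ with $\alpha(\x) = q$ or a long root with $\alpha(\x) = r\che q$ (in case (ii)); by the description of $\wh\Delta(\lam)$ in Proposition~\ref{Pro:description-of-admissble-weights}, the real root $\alpha - \alpha(\x)\delta$ lies in $\wh\Delta(\lam)$, vanishes on $D+\x$, and is not in $\Delta_0$ because $\alpha(\x)\ne 0$. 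Hence $\wh\Delta_f(\lam) \supsetneq \Delta_0$ and Proposition~\ref{Pro:criterion-nonzer-character}(i) forces $\BRS{0}{\Irr{\lam}} = 0$.

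For the if direction, assume the height bound. In case (i) every $\alpha \in \Delta$ satisfies $|\alpha(\x)| \le \theta(\x) = \on{ht}(f)/2 < q$, so the congruence $\alpha(\x) \equiv 0 \pmod{q}$ defining $\wh\Delta_f(\lam)$ forces $\alpha(\x) = 0$. The same estimate handles short roots in case (ii) via $\theta_s(\x) = \on{ht}\che(f)/2 < q$. The delicate point in case (ii) is the long-root condition, which demands $|\alpha(\x)| < r\che q$, i.e., $\on{ht}(f) < 2 r\che q$: I will extract this from the uniform inequality $\theta(\x) \le r\che \theta_s(\x)$, verified case by case in types $B_\ell, C_\ell, F_4, G_2$ by writing $r\che \theta_s - \theta$ as a manifestly nonnegative integer combination of simple roots (using that $\x$ lies in the closed dominant chamber). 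In all cases we therefore obtain $\wh\Delta_f(\lam) = \Delta_0$.

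It remains to show that the formula of Proposition~\ref{Pro:criterion-nonzer-character}(ii) gives a nonzero series. The $y = 1$ term contributes $q^{-\lam(D+\x)}$ with coefficient $\prod_{\alpha \in \Delta_{0,+}} \bra \lam + \wh\rho, \alpha\che\ket / \bra \wh\rho, \alpha\che\ket$, which is strictly positive because $\bar\lam + \rho$ is regular $\Delta_0$-dominant. Admissibility of $\lam$ makes $\lam + \wh\rho$ strictly $\wh\Pi(\lam)$-dominant, so $(\lam + \wh\rho) - y(\lam + \wh\rho) \in \sum_{\alpha \in \wh\Pi(\lam)} \Z_{\ge 0}\,\alpha$ for every $y \in \affW(\lam)$; each finite simple root in $\wh\Pi(\lam)$ pairs nonnegatively with $\x$, while the added affine simple root $-\theta + q\delta$ (respectively $-\theta_s + q\delta$) pairs with $D + \x$ to give $q - \theta(\x) > 0$ (respectively $q - \theta_s(\x) > 0$) under the height bound. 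Choosing $y$ to be the shortest representative in its $W_0$-coset, any $y \ne 1$ then strictly increases the $q$-exponent, so the $y = 1$ contribution cannot be cancelled and $\BRS{0}{\Irr{\lam}} \ne 0$. The principal technical obstacle is precisely this last minimality argument, together with the case-by-case check underpinning $\theta(\x) \le r\che \theta_s(\x)$.
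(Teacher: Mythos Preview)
Your proposal is correct and follows essentially the same route as the paper: apply Proposition~\ref{Pro:case-by-case} for the ``only if'' direction, and for the ``if'' direction show that every simple root of $\wh\Pi(\lam)$ pairs nonnegatively with $D+\x$, strictly so outside $\Pi_0$, so that the $y=1$ term in Proposition~\ref{Pro:criterion-nonzer-character}~(ii) is the unique lowest-degree contribution. Your explicit case check of the inequality $\theta(\x)\le r\che\,\theta_s(\x)$ (needed in part~(ii) to control the long-root congruence) is a detail the paper suppresses by writing ``can be proven in the similar way as in (i)''; otherwise the two arguments coincide.
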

\begin{proof}
(i)
Suppose that 
$\on{ht}(f)\geq 2q$.
By Lemma \ref{Lem:case-by-case},
there exists  $\alpha\in \Delta_+$ such that $\alpha(\x)=q$.
It follows that  $-\alpha+q\delta\in \wh\Delta_f(\lam) \backslash \Delta_0$.
Hence
$\BRS{0}{\Irr{\lam}}=0$
by 
Proposition \ref{Pro:criterion-nonzer-character}.
Conversely, suppose that 
$\on{ht}(f)< 2q$.
Then
 $\bra \alpha,D+\x\ket\geq 0$
for any $\alpha\in \wh \Delta(\lam)\cap \wh \Delta_+$,
and the equality holds
 if and only if 
$\alpha\in \Delta_{0,+}$.
Hence
$\wh{\Delta}_f(\lam)=\Delta_0$
and the assertion follows from
 Proposition \ref{Pro:criterion-nonzer-character}.
The proof of (ii) is similar.
\end{proof}

\begin{proof}[Proof of Theorem \ref{Th:main-admissible}]
Since
$X_{\Irr{\lam}}\subset \Nil$ by 
Theorem \ref{Th:Conj:Feigin-Frenkel},
the assertion follows immediately from
  Proposition \ref{Pro:condition-to-be-nonzero}
and
Theorem
\ref{Th:H(L(lam))-is-non-zero-if-and-only-if}.
\end{proof}

\subsection{Irreducibility of $\Nil_q$ and ${}^L\Nil_q$}
\begin{table}
 \begin{tabular}{|c|c|l|l|}
\hline $\fing$& $q$ & $\mb{O}_q$&exceptional  \\
\hline $\mf{sl}_{l +1}$&any &$(q,\dots,q,s) $, $0\leq s\leq q-2$
& yes\\
\hline $\mf{sp}_{2l }$&odd  &$(\underbrace{q,\dots,q}_{\text{even}},
 s) $, $0\leq s\leq q-1$,
$s$ even &yes\\
 & &$(\underbrace{q,\dots,q}_{\text{even}},q-1,s)$,
$1\leq s\leq q-1$,
$s$ even&yes if $s=q-1$\\
 &(even) &$({q,\dots,q},s)$,
$0\leq s\leq q-1$,
$s$ even&yes\\
\hline 
$\mf{so}_{2l  +1}$&odd&
$(\underbrace{q,\dots,q}_{\text{even}},s)$, $0\leq s\leq q$,
$s$ odd &yes
\\
 & &  $(\underbrace{q,\dots,q}_{\text{odd}},s,1)$, $0\leq s\leq q-1$,
$s$ odd &yes if $s=1$\\
 & (even)&  $(q+1,\underbrace{q,\dots,q}_{\text{even}})$ &yes\\
 & &  $(q+1,\underbrace{q,\dots,q}_{\text{even}},s,1)$,
$1\leq s\leq q-1$, $s$ odd&yes if $s=1$\\
 & &  $(q+1,\underbrace{q,\dots,q}_{\text{even}},q-1,s)$,
$1\leq s\leq q-1$, $s$ odd&yes if $s=q-1$\\
\hline
$\mf{so}_{2l }$&odd &
$(\underbrace{q,\dots,q}_{\text{odd}},s)$, $0\leq s\leq q$,
$s$ odd&yes if $s=q$ \\
& &
$(\underbrace{q,\dots,q}_{\text{even}},s,1)$, $0\leq s\leq q-1$,
$s$ odd& yes \\
 & even &
$(q+1,\underbrace{q,\dots,q}_{\text{even}},s)$, $0\leq s\leq q-1$,
$s$ odd& yes if $s=1$\\
& &$(q+1,\underbrace{q,\dots,q}_{\text{even}},q-1,s,1)$, $0\leq s\leq
	  q-1$,
$s$ odd& yes if $s=q-1$\\
\hline
\end{tabular}
\caption{Orbits
 $\mb{O}_q$ 
in classical Lie algebras}
\label{table:classical-principal}
\end{table}
\begin{table}
 \begin{tabular}{|c|c|l|}
\hline $\fing$& $q$ & ${}^L\mb{O}_q$ \\
\hline $\mf{sp}_{2l }$&even  &$(q,\dots,q,
 s) $, $0\leq s\leq q-1$,
$s$ even\\
 &odd &$(q+1,\underbrace{q,\dots,q}_{\text{even}},s)$,
$0\leq s\leq q-1$, $s$ even\\
 & &$(q+1,\underbrace{q,\dots,q}_{\text{even}},q-1,s)$,
$2\leq s\leq q-1$, $s$ even\\
\hline 
$\mf{so}_{2l  +1}$&any&
$(\underbrace{2q,\dots,2q}_{\text{even}},s)$, $0\leq s\leq 2q-1$,
$s$ odd
\\
 & &  $(\underbrace{2q,\dots,2q}_{\text{even}},2q-1,s,1)$, $0\leq s\leq
	  2q-1$,
$s$ odd\\
\hline
\end{tabular}
\caption{Orbits
${}^L\mb{O}_q$ in non-simply laced classical Lie
 algebras
}
\label{table:classical-coprincipal}
\end{table}
\begin{table}
\begin{center}
 \begin{tabular}{|c|c|c|c|c|}
 \hline
$q$ & $\mb{O}_q$&exceptional  &$c(\frac{p}{q})$\\
\hline 
$\geq 6$& $G_2$ &yes   &$-\frac{2 (12 p-7 q) (7 p-4 q)}{p q}$\\
\hline
$(3),4,5$&$G_2(a_1)$& no  &$-\frac{4 (6 p-7 q) (p-2 q)}{p q}$\\
\hline 
$2$ &$\widetilde{A}_1$& yes  & $63-9 p-\frac{112}{p}$
\\
\hline
$1$&0&yes &$\frac{14 (p-4)}{p}$\\
\hline 
\end{tabular}
\end{center}
\caption{Orbits $\mb{O}_q$ in type  $G_2$}
\label{Oq-for-G_2}
\end{table}
\begin{table}
\begin{center}
 \begin{tabular}{|c|c|c|c|}
 \hline
$q$ & ${}^L\mb{O}_q$ &$c(\frac{p}{3q})$\\
\hline 
$\geq 4$& $G_2$  &$-\frac{2 (7 p-12 q) (4 p-7 q)}{p q}$\\
\hline
$2,3$&$G_2(a_1)$ & $-\frac{4 (2 p-7 q) (p-6 q)}{p q}$\\
\hline 
$1$ &${A}_1$&$-\frac{2 (p-12) (p-7)}{p}$\\
\hline
\end{tabular}
\end{center}
\caption{Orbits ${}^L\mb{O}_q$ in type   $G_2$}
\label{Oqche-for-G2}
\end{table}
\begin{table}
\begin{center}
 \begin{tabular}{|c|c|c|c|c|}
 \hline 
$q$&$\mb{O}_q$&exceptional &$c(\frac{p}{q})$\\
\hline 
$\geq 12$ & $F_4$&yes  &$-\frac{4 (18 p-13 q) (13 p-9 q)}{p q}$\\
\hline
$(8),9,(10),11$&$F_4(a_1)$&no &$1062-\frac{600 p}{q}-\frac{468 q}{p}$\\
\hline
$(6),7$&$F_4(a_2)$&no  &$632-\frac{216 p}{7}-\frac{3276}{p}$\\
\hline
$(4),5$&$F_4(a_3)$&no &$-\frac{12 (p-15) (6 p-65)}{5 p}$\\
\hline
$3$&$\widetilde{A}_2+A_1$&yes  &$316-18 p-\frac{1404}{p}$
\\\hline
$(2)$&$A_1+\widetilde{A}_1$&yes  &
\\\hline
$1$&$0$&yes &$\frac{52 (p-9)}{p}$\\
\hline
\end{tabular}
\end{center}
\caption{Orbits $\mb{O}_q$ in type $F_4$}
\label{Oq-for-F4}
\end{table}
\begin{table}
\begin{center}
 \begin{tabular}{|c|c|c|c|}
 \hline 
$q$& ${}^L\mb{O}_q$ &$c(\frac{p}{2q})$\\
\hline 
$\geq 9$ & $F_4$ &$-\frac{4 (13 p-18 q) (9 p-13 q)}{p q}$\\
\hline
$6,7,8$&$F_4(a_1)$ &$1062-\frac{300 p}{q}-\frac{936 q}{p}$\\
\hline
$5$&$F_4(a_2)$ &$632-\frac{108 p}{5}-\frac{4680}{p}$\\
\hline
$4$&$B_3$&$562-21 p-\frac{3744}{p}$\\
\hline
$3$&$F_4(a_3)$ &$-\frac{12 (p-18) (p-13)}{p}$
\\\hline
$2$&$A_2+\widetilde{A_1}$ &$-\frac{9 (p-16) (p-13)}{p}$\\
\hline 
$1$&$A_1$ &$-\frac{3 (p-24) (p-13)}{p}$\\
\hline
\end{tabular}
\end{center}
\caption{Orbits ${}^L\mb{O}_q$ in type  $F_4$}
\label{Oqche-for-F4}
 \end{table}

\begin{table}
\begin{center}
  \begin{tabular}{|c|c|c|c|c|c|}
 \hline 
$q$&$\mb{O}_q$&exceptional  &$c(\frac{p}{q})$\\
\hline 
$\geq 12$ & $E_6$&yes &$-\frac{6 (12 p-13 q) (13 p-12 q)}{p q}$
\\
\hline
$9,10,11$&$E_6(a_1)$&no   &$-\frac{8 (9 p-13 q) (7 p-9 q)}{p q}$
\\
\hline
$8$&$D_5$&yes  &$-45 p+1162-\frac{7488}{p}$\\
\hline
$6,7$&$E_6(a_3)$&no  &$-\frac{36 (6 p-13 q) (p-2 q)}{p q}$\\
\hline
$5$&$A_4+A_1$&yes  &$-\frac{2 (7 p-90) (9 p-130)}{5 p}$
\\\hline
$4$&$D_4(a_1)$&no &$-18 p+524-\frac{3744}{p}$
\\\hline
$3$&$2A_2+A_1$&yes  &$-\frac{18 (p-13) (p-12)}{p}$
\\\hline
$2$&$3 A_1$&yes &$-9 p+263-\frac{1872}{p}$
\\\hline
$1$&$0$&yes  &$\frac{78 (p-12)}{p}$
\\\hline
\end{tabular}
\end{center}
\caption{Orbits $\mb{O}_q$ in type  $E_6$}
\label{Oq-for-E6}
\end{table}
\begin{table}
\begin{center}
  \begin{tabular}{|c|c|c|c|c|c|}
 \hline 
$q$& $\mb{O}_q$&exceptional  &$c(\frac{p}{q})$
 \\
\hline 
$\geq 18$ & $E_7$&yes &$-\frac{7 (18 p-19 q) (19 p-18 q)}{p q}$
\\
\hline
$14,15,16,17$&$E_7(a_1)$&no  &$-\frac{9 (14 p-19 q) (11 p-14 q)}{p q}$
\\
\hline
$12,13$&$E_7(a_2)$&no  &$-\frac{(106 p-171 q) (9 p-14 q)}{p q}$
\\
\hline
$10,11$&$E_7(a_3)$&no  &$-\frac{666 p}{q}+2521-\frac{2394 q}{p}$
\\
\hline
$9$&$E_6(a_1)$&no  &$56 p+2199-\frac{21546}{p}$
\\\hline
$8$&$E_7(a_4)$&no  &$-\frac{189 p}{4}+1901-\frac{19152}{p}$
\\\hline
$7$&$A_6$&yes  &$-\frac{(p-18) (48 p-931)}{p}$
\\\hline
$6$&$E_7(a_5)$&no  &$-\frac{3 (p-19) (13 p-252)}{p}$ 
\\\hline
$5$&$A_4+A_2$&yes  &$\frac{9}{5} \left(-16 p+655-\frac{6650}{p}\right)$
\\\hline
$4$&$A_3+A_2+A_1$&yes  &$-\frac{3 (3 p-56) (5 p-114)}{2 p}$
\\\hline
$3$&$2A_2+A_1$&yes  &$-18 p+721-\frac{7182}{p}$
\\\hline
$2$&$4A_1$&yes  &$-\frac{12 (p-21) (p-19)}{p}$
\\\hline
$1$&$0$&yes&$\frac{133 (p-18)}{p}$
\\\hline
\end{tabular}
\end{center}
\caption{Orbits $\mb{O}_q$ in type $E_7$}
\label{Oq-for-E7}
\end{table}
\begin{table}
  \begin{tabular}{|c|c|c|c|c|c|}
 \hline 
$q$&$\mb{O}_q$&exceptional &$c(\frac{p}{q})$ \\
\hline 
$\geq 30$ & $E_8$&yes &$-\frac{8 (30 p-31 q) (31 p-30 q)}{p q}$\\
\hline
$24,25,26,27,28,29$&$E_8(a_1)$&no  &$-\frac{10 (24 p-31 q) (19 p-24 q)}{p q}$\\
\hline
$20,21,22,23$&$E_8(a_2)$&no &$-\frac{12 (20 p-31 q) (13 p-20 q)}{p q}$\\
\hline
$18,19$&$E_8(a_3)$&no &$-\frac{2400 p}{q}+8438-\frac{7440 q}{p}$\\
\hline
$15,16,17$&$E_8(a_4)$&no  &$-\frac{16 (15 p-31 q) (7 p-15 q)}{p q}$
\\\hline
$14$&$E_8(b_4)$&no &$-\frac{696 p}{7}+6426-\frac{104160}{p}$
\\\hline
$12,13$&$E_8(a_5)$&no  &$-\frac{4 (23 p-60 q) (12 p-31 q)}{p q}$
\\\hline
$10,11$&$E_8(a_6)$&no  &$-\frac{24 (10 p-31 q) (3 p-10 q)}{p q}$
\\\hline
$9$&$E_8(b_6)$&no &$-\frac{4 (4 p-135) (11 p-372)}{3 p}$
\\\hline
$8$&$A_7$&yes &$-\frac{3 (p-31) (21 p-640)}{p}$ 
\\\hline
$7$&$A_6+A_1$&yes  &$-\frac{342 p}{7}+3192-\frac{52080}{p}$
\\\hline
$6$&$E_8(a_7)$&no  &$-\frac{40 (p-36) (p-31)}{p}$
\\\hline
$5$&$A_4+A_3$&yes  &$-\frac{12 (p-31) (3 p-100)}{p}$
\\\hline
$4$&$2 A_3$&yes  &$-\frac{30 (p-32) (p-31)}{p}$
\\\hline
$3$&$2 A_2+2A_1$&yes  &$-\frac{20 (p-36) (p-31)}{p}$
\\\hline
$2$&$4 A_1$&yes  &$-\frac{12 (p-40) (p-31)}{p}$
\\\hline
$1$&$0$&yes &$\frac{248 (p-30)}{p}$
\\\hline
\end{tabular}
\caption{Orbits $\mb{O}_q$ in type  $E_8$}
\label{table:E8}
\end{table}

\begin{Th}\label{Th:Main-orbit1}
Let $q\in \N$.
 \begin{enumerate}
  \item (\cite{Geo04}\footnote{Unfortunately there are some typos in the 
$E_7$ table in \cite{Geo04}.},
not necessarily $(q,r\che)=1$)
There exists a unique nilpotent orbit
$\mb{O}_q$ such that 
$\Nil_q=\overline{\mb{O}_q}$.
If $q\geq h_{\fing}$ 
then $\mb{O}_q=\mb{O}_{\prin}$,
the principal nilpotent orbit.
The orbits $\mb{O}_q$ for 
$q< h_{\fing}$ are listed in Tables 
 \ref{table:classical-principal}, 
\ref{Oq-for-G_2},
\ref{Oq-for-F4},
\ref{Oq-for-E6},
\ref{Oq-for-E7} and  \ref{table:E8}.
  \item 
(\cite{Geo04} for type $G_2$ and $F_4$)
There exists a unique nilpotent orbit
${}^L\mb{O}_q$ such that 
${}^L\Nil_q=\overline{{}^L \mb{O}_q}$.
	If  $q\geq {}^L h\che$
then ${}^L \mb{O}_q=\mb{O}_{\prin}$.
The orbits ${}^L \mb{O}_q$ for $q<{}^L h\che$
for non-simply laced Lie algebras are listed in
Tables \ref{table:classical-coprincipal},
\ref{Oqche-for-G2} and
\ref{Oqche-for-F4}.
 \end{enumerate}
\end{Th}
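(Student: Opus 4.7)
The plan is to reduce both (i) and (ii) to a combinatorial identification of the unique maximal nilpotent orbit in $\Nil$ whose height (respectively coheight) is less than $2q$. First observe that $\Nil_q=\{x\in\fing:(\ad x)^{2q}=0\}$ is cut out by polynomial equations, hence is a closed $\Ad G$-invariant subvariety of $\Nil$, and that $\on{ht}\che$ is upper-semicontinuous on $\Nil$ (the weighted Dynkin diagram can only decrease componentwise on passing to a smaller orbit in the closure order, and $2\theta_s(\srho)$ is a nonnegative linear combination of its entries), so ${}^L\Nil_q$ is closed as well. Since $\Nil$ has only finitely many $\Ad G$-orbits, both $\Nil_q$ and ${}^L\Nil_q$ are finite unions of orbit closures, and irreducibility amounts to exhibiting a single orbit whose closure exhausts the union.

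When $q\geq h_{\fing}$ (respectively $q\geq {}^Lh\che$), the formula \eqref{eq;height-of-principal-nil} yields $\on{ht}(f_{\prin})<2q$ (respectively $\on{ht}\che(f_{\prin})<2q$), so the relevant set coincides with $\Nil=\overline{\mb{O}_{\prin}}$ and the conclusion is immediate. For the remaining range of $q$, part (i) is the theorem of Georgiev \cite{Geo04}, and the explicit identification of $\mb{O}_q$ in Table \ref{table:classical-principal} follows from Proposition \ref{Pro:classical}(i): one solves the inequality $\on{ht}<2q$ on the partitions $\mc{P}_X(N)$ parametrizing classical orbits and reads off the maximum in the dominance order subject to the relevant parity constraints. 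For the exceptional types, Tables \ref{Oq-for-G_2}--\ref{table:E8} are verified orbit-by-orbit by computing $\on{ht}$ from Dynkin's weighted diagrams and consulting the Hasse diagrams of closure orderings (as compiled in \cite{ColMcG93}) to confirm that every smaller-height orbit lies in the closure of the proposed maximum. Part (ii) proceeds along strictly parallel lines, using Proposition \ref{Pro:classical}(ii) to handle $\mf{sp}_{2n}$ and $\mf{so}_{2n+1}$, and Dynkin's tables for the remaining $G_2$ and $F_4$ coheight computations.

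The main obstacle is purely combinatorial. In classical types $B$ and $D$ the height formula of Proposition \ref{Pro:classical}(i) splits according to whether $d_2\geq d_1-1$ or $d_2\leq d_1-2$, and one must verify that the candidate maximum in one regime is not superseded by a partition from the other; this is settled by a short case analysis using the defining parity constraints on $\mc{P}_B(N)$ and $\mc{P}_D(N)$. For each exceptional type one must inspect the finitely many orbits with $\on{ht}<2h_{\fing}$ individually, confirming for every relevant $q$ that the set of orbits satisfying the bound is saturated downward in the closure order and has a unique maximum; the tables simply record the outcome of this inspection. The coheight analysis underpinning (ii) is formally identical and presents the same kind of case-check, with the additional mild subtlety that in type $B_\ell$ one must track the two possibilities in Proposition \ref{Pro:case-by-case}(ii) identifying which root---long or short---realizes the obstruction.
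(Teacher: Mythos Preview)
Your overall plan matches the paper's: reduce to partition combinatorics for classical types and consult closure tables for the exceptional ones. However, one step fails as written. You assert that ``the weighted Dynkin diagram can only decrease componentwise on passing to a smaller orbit in the closure order,'' and use this to conclude that ${}^L\Nil_q$ is closed. This claim is false: in $\mf{so}_7$ the orbit with partition $(3,2,2)$ lies in the closure of the orbit $(3,3,1)$, but their weighted Dynkin diagrams are $(1,0,1)$ and $(0,2,0)$, which are not componentwise comparable. So your upper-semicontinuity argument for $\on{ht}\che$ is not justified, and with it the a~priori closedness of ${}^L\Nil_q$.

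The paper avoids this issue by never arguing closedness separately. For types $B_\ell$ and $C_\ell$ it shows directly, via Proposition~\ref{Pro:classical}(ii), that $\mb{O}_d\subset{}^L\Nil_q$ if and only if $d$ is dominated by an explicit partition $\tilde{d}_q$ (not itself in $\mc{P}_X(N)$). The device you are missing is the \emph{$X$-collapse} of \cite[Lemma~6.3.3]{ColMcG93}: there is a unique maximal $d_q\in\mc{P}_X(N)$ dominated by $\tilde{d}_q$, and every $d\in\mc{P}_X(N)$ with $d\leq\tilde{d}_q$ already satisfies $d\leq d_q$. Since the closure order on classical nilpotent orbits coincides with dominance on partitions, this gives ${}^L\Nil_q=\overline{\mb{O}_{d_q}}$ in one stroke, with closedness a consequence rather than a hypothesis. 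Your ``solve the inequality and read off the maximum in dominance order subject to parity constraints'' is the right intuition, but the $X$-collapse is what makes it rigorous. (Your closing reference to Proposition~\ref{Pro:case-by-case}(ii) is also misplaced; that proposition produces roots with a prescribed $\srho$-value for use in Theorem~\ref{Th:H(L(lam))-is-non-zero-if-and-only-if} and plays no role in the irreducibility argument.)
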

\begin{proof}
(ii) Let $X=B_{l }$ or $C_{l }$,
and let $\fing$ be a Lie algebra of type $X$,
 $d\in \mc{P}_X(N)$.
 From Proposition \ref{Pro:classical}
it follows that
$\mb{O}_d\subset {}^L\Nil_q$ 
if and only if
$d$ is dominated by
$\tilde{d}_q$,
where
\begin{align*}
 \tilde{d}_q=\begin{cases}
	      (q+1,q,q,\dots,q,s),\ 0\leq s\leq q-1&
\text{for $\mf{sp}_{2n}$},\\
(2q,2q,\dots,2q,s), 0\leq s\leq 2q-1
&\text{for $\fing=\mf{so}_{2n+1}$}.
	     \end{cases}
\end{align*}
It is known that  there exists a unique  maximal partition $d_q$
 in $\mc{P}_X(N)$
dominated by $\tilde{d}_q$, see \cite[Lemma 6.3.3.]{ColMcG93}.
The partition $d_q$ is 
called {\em the $X$-collapse} of $\tilde{d}_q$.
It follows that ${}^L\mb{O}_q=\mb{O}_{d_q}$
gives that
${}^L\Nil_q={}^L\overline{\mb{O}}_{q}$,

For type $G_2$ and $G_4$
the representation 
$\pi_{\theta_s}$ is
the minimal representation of $\fing$.
Hence the assertion has been proved in  \cite[4.1]{Geo04}.
\end{proof}
%

\subsection{The $C_2$-cofiniteness of $W$-algebras}
For an admissible number $k$,
set
\begin{align}
 \mb{O}[k]\teigi \begin{cases}
       \mb{O}_q&\text{if }k\in \A[q] \text{ with $(q,r\che)=1$},\\
{}^L\mb{O}_q &\text{if }k\in {}^L\A[q].
      \end{cases}
\label{eq:O[k]}
\end{align}
Then
  Theorem \ref{Th:main-admissible}
and Theorem
\ref{Th:Main-orbit1} read as
\begin{align}
X_{\Irr{\lam}}=\overline{\mb{O}[k]}\label{eq:varity-is-the-clousre}
\end{align}
for  $\lam\in \Prp^k$.
Hence the following assertion follows
immediately from Theorem \ref{Th:C2-cofinitness-if-variety-is-the-orbit}.
\begin{Th}\label{Th:C_2-cofiniteness-of-modules-over-W-algebras}
Let $k$ be  an admissible number.
\begin{enumerate}
 \item 
 For
 $\lam\in \Prp^k$,
the $\Wg{k}$-module
$\BRS{0}{\Irr{\lam}}$ is (nonzero and) $C_2$-cofinite
if and only if
$f\in \mb{O}[k]$.
\item
The vertex algebra $\BRS{0}{V_k(\fing)}$ 
is (nonzero and) $C_2$-cofinite
 if and only if
$f\in \mb{O}[k]$.
In particular,
the simple $W$-algebra
$\W_k(\fing,f)$ is $C_2$-cofinite
if $f\in \mb{O}[k]$.
\end{enumerate}
\end{Th}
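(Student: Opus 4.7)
The plan is to deduce the theorem directly from the associated variety machinery already established: namely, the $C_2$-cofiniteness criterion Theorem \ref{Th:C2-cofinite-if-and-only-if} (together with its Corollary \ref{Co:C2-cofinite-if-and-only-if}), and the explicit computation $X_{\Irr{\lam}}=\overline{\mb{O}[k]}$ for $\lam\in\Prp^k$ furnished by Theorems \ref{Th:main-admissible} and \ref{Th:Main-orbit1}.

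For part (i), I would start by invoking Theorem \ref{Th:main-admissible}(ii): because $k$ is degenerate, $X_{\Irr{\lam}}\subsetneqq\Nil$, and in fact $X_{\Irr{\lam}}\cong\Nil_q$ or ${}^L\Nil_q$ according to whether $k\in\A[q]$ or $k\in{}^L\A[q]$. Then by Theorem \ref{Th:Main-orbit1}, this variety is exactly $\overline{\mb{O}[k]}$, which is the closure of a single nilpotent orbit and is therefore irreducible. Now apply Theorem \ref{Th:C2-cofinite-if-and-only-if}: since $X_{\Irr{\lam}}=\overline{\mb{O}[k]}\subset\Nil$ automatically, the $C_2$-cofiniteness of $\BRS{0}{\Irr{\lam}}$ is equivalent to $\overline{\Ad G\cdot\chi}$ appearing as an irreducible component of $\overline{\mb{O}[k]}$. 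Irreducibility of $\overline{\mb{O}[k]}$ forces $\overline{\Ad G\cdot\chi}=\overline{\mb{O}[k]}$, which by the orbit--closure correspondence is equivalent to $\Ad G\cdot\chi=\mb{O}[k]$, i.e.\ $f\in\mb{O}[k]$.

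For part (ii), the argument is the same, but applied to $V_k(\fing)=\Irr{k\Lam_0}$, which by Lemma \ref{Lem:admissibe-module-in-the-category-KL} is an object of $\Prp^k$ since $k$ is admissible. Thus $X_{V_k(\fing)}=\overline{\mb{O}[k]}$, and Corollary \ref{Co:C2-cofinite-if-and-only-if} yields the stated equivalence. The last sentence follows because $\W_k(\fing,f)$ is a quotient vertex algebra of $\BRS{0}{V_k(\fing)}$ (noted just after Corollary \ref{Co:Zhu's-Poisson-algebra-of-W}), and a quotient of a $C_2$-cofinite vertex algebra is $C_2$-cofinite.

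There is essentially no obstacle here: the heavy lifting has already been done in the preceding sections. The only slightly delicate point is to confirm that $k\Lam_0\in\Prp^k$ whenever $k$ is admissible, so that the character-theoretic computation of $X_{\Irr{\lam}}$ applies uniformly to both parts. I would insert a one-line citation to Lemma \ref{Lem:admissibe-module-in-the-category-KL} (or directly to the definition of admissible $k$) to make this transparent.
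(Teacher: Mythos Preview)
Your proposal is correct and follows essentially the same route as the paper: the paper simply states that Theorems \ref{Th:main-admissible} and \ref{Th:Main-orbit1} give $X_{\Irr{\lam}}=\overline{\mb{O}[k]}$ for any $\lam\in\Prp^k$, and then invokes Theorem \ref{Th:C2-cofinite-if-and-only-if} and Corollary \ref{Co:C2-cofinite-if-and-only-if}. Your write-up merely unpacks this one-line argument with the irreducibility observation made explicit; the only minor quibble is that the fact $k\Lam_0\in\Prp^k$ is immediate from the \emph{definition} of an admissible number rather than from Lemma \ref{Lem:admissibe-module-in-the-category-KL} (which asserts nonemptiness), as you yourself note parenthetically.
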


In  Tables \ref{Oq-for-G_2}--\ref{table:E8}
for exceptional type Lie algebras 
we give 
the explicit formulas 
of the central charge
\eqref{eq:cc}
of
$\W_{k}(\fing,f)$
for  an admissible number $k$ and $f\in \mb{O}[k]$
(The central charge of $\W^k(\fing,f)$
with $k+h\che=p/q$ is denoted by
$c(p/q)$.)

\begin{Rem}
Suppose that $l (=\rank \fing)=2$.
Then
 any nilpotent element of $\fing$ belongs to $\mb{O}[k]$
for some admissible number $k$.
Hence this case 
 the $W$-algebra
$\W^k(\fing,f)$ associated with any nilpotent element
$f$ is $C_2$-cofinite for some value of $k$.
\end{Rem}

\subsection{The $C_2$-cofiniteness of exceptional $W$-algebras}
Recall \cite{KacWak08,ElaKacVin08}
that
a pair
$(q,f)$ of a positive integer $q$
and a nilpotent element $f$ of $\fing$
is called {\em exceptional}
if the following conditions are satisfied.
\begin{itemize}
 \item 
$q$ is equal to or greater than
the maximum of the Coxeter numbers of the simple factors
of the minimal Levi subalgebra containing $f$,
\item 
$\dim \fing^f=\dim \fing^{\sigma_q}$,
where $\sigma_q$ is the automorphism of $\fing$
such that $\sigma_q(x_{\alpha})=e^{\frac{2\pi \sqrt{-1}}{q}\on{ht}(\alpha)}x_{\alpha}$
for a root vector $x_{\alpha}$.
\end{itemize}
Exceptional pairs are classified in \cite{KacWak08} for
$\fing=\mf{sl}_n$
and in \cite{ElaKacVin08} for a general $\fing$.

The simple $W$-algebra
$\W_k(\fing,f)$  are called {\em exceptional}
if $(q,f)$ is an exceptional pair
and  $k\in \A[q]$ with $(q,r\che)=1$.
(The latter condition is a technical one.)
In the case that
$f$ is a principal nilpotent element
exceptional $W$-algebras are  the minimal series
$W$-algebras associated with principal nilpotent elements
discovered by Frenkel, Kac and Wakimoto \cite{FKW92}.

In \cite{KacWak08} it was conjectured that
the conformal field theories associated with
the exceptional $W$-algebras
are rational.
In the language of vertex operator algebras
this amounts to showing 
the exceptional $W$-algebras 
are rational and $C_2$-cofinite (cf.\ \cite{Zhu96}).

\begin{Th}\label{Th:Kac-Wakimoto}
The following are equivalent:
\begin{enumerate}
 \item $(q,f)$ is an exceptional pair.
\item $f\in \mb{O}_q$ and $f$ is of principal type,
that is,
$f$ is a principal nilpotent element in a Levi subalgebra of $\fing$.
\end{enumerate}
\end{Th}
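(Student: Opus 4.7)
The plan is to reduce the theorem to a comparison between two independently obtained classifications: the classification of exceptional pairs $(q,f)$ from \cite{KacWak08,ElaKacVin08}, and the classification of the orbits $\mb{O}_q$ furnished by Theorem \ref{Th:Main-orbit1} together with Tables \ref{table:classical-principal}--\ref{table:E8}. Both classifications are explicit, case-by-case lists indexed by $(\fing,q)$, so the strategy is to match these lists entry by entry.

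First I would unpack the defining condition $\dim\fing^f = \dim\fing^{\sigma_q}$ of an exceptional pair. Since $\sigma_q$ acts on $\fing_\alpha$ by multiplication by $\epsilon_q^{\on{ht}(\alpha)}$, its fixed-point subalgebra is
\[
\fing^{\sigma_q} = \finh \oplus \bigoplus_{\alpha\in\Delta,\ q\,\mid\,\on{ht}(\alpha)} \fing_\alpha,
\]
whose dimension is a pure root-height count. On the other hand, $\dim\fing^f$ is computed from the $\mf{sl}_2$-decomposition of $\ad f$ and hence from the weighted Dynkin diagram of $f$. Comparing the two counts, using that $\fing^{\sigma_q}$ contains the $\sigma_q$-fixed part of the Slodowy-type model of $\fing^f$, forces $\on{ht}(f)<2q$, i.e.\ $f\in\Nil_q=\overline{\mb{O}_q}$, and in fact maximality of $\dim\fing^f$ among such $f$ pins $f$ down to lie in $\mb{O}_q$ itself. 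The Coxeter-number condition then ensures, via the classification in \cite{ElaKacVin08}, that $f$ is principal in its minimal Levi, i.e.\ of standard Levi (equivalently principal) type. This yields (i) $\Rightarrow$ (ii).

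For the converse, given $f\in \mb{O}_q$ of principal type with minimal Levi $\finl\ni f$, I would check the two defining conditions of an exceptional pair separately. The height bound $\on{ht}(f)<2q$ is built into the definition of $\mb{O}_q$; the Coxeter-number bound follows because the heights appearing in $\mb{O}_q\cap\finl$ are controlled by the Coxeter number of $\finl$, as visible from Tables \ref{table:classical-principal}--\ref{table:E8} (in the classical types this uses Proposition \ref{Pro:classical}). The dimension identity $\dim\fing^f=\dim\fing^{\sigma_q}$ is then verified case-by-case: for classical $\fing$ by direct partition arithmetic from the explicit forms of $\mb{O}_q$ and ${}^L\mb{O}_q$ in Tables \ref{table:classical-principal}, \ref{table:classical-coprincipal}, and for exceptional $\fing$ by matching the "exceptional" columns of the tables with the lists in \cite{KacWak08,ElaKacVin08}.

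The main obstacle is precisely this last case-by-case verification in the exceptional types $G_2, F_4, E_6, E_7, E_8$, where neither the classification of exceptional pairs nor the determination of $\mb{O}_q$ admits a uniform description. However, since both sides of the equivalence are already recorded in the tables of the paper (with the "exceptional" column on one side and the Bala--Carter label of $\mb{O}_q$ together with its principal-type status on the other), the comparison is reduced to inspection, completing the proof.
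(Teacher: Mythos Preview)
Your overall strategy---reduce to a direct comparison of the classification of exceptional pairs in \cite{KacWak08,ElaKacVin08} with the explicit list of orbits $\mb{O}_q$ in Tables~\ref{table:classical-principal}--\ref{table:E8}---is exactly what the paper does; its proof is a single sentence invoking precisely this comparison.

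That said, the conceptual arguments you insert in the middle are not correct as stated and are not needed. The claim that the equality $\dim\fing^f=\dim\fing^{\sigma_q}$ ``forces $\on{ht}(f)<2q$'' is not justified, and the appeal to ``maximality of $\dim\fing^f$'' is backwards: on $\Nil_q$ the open orbit $\mb{O}_q$ has \emph{minimal} centralizer dimension, not maximal, and in any case the exceptional-pair condition is an equality with a specific number, not an extremal condition. Likewise, the reference to ${}^L\mb{O}_q$ and Table~\ref{table:classical-coprincipal} is a red herring: exceptional pairs are defined only on the principal-admissible side, so only $\mb{O}_q$ enters. None of this affects the actual proof, which as you correctly conclude in your final paragraph is pure table inspection; you should simply drop the heuristic middle paragraphs.
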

\begin{proof}
 The assertion follows from the classification \cite{ElaKacVin08}
of exceptional pairs and 
that of $\mb{O}_q$.
\end{proof}
In Tables \ref{table:classical-principal},
\ref{Oq-for-G_2},
\ref{Oq-for-F4},
\ref{Oq-for-E6},
\ref{Oq-for-E7} and
\ref{table:E8},
we indicate 
whether the pair of $q$
and the nilpotent  is exceptional or not.

The following assertion
follows immediately
from 
 Theorem \ref{Th:C_2-cofiniteness-of-modules-over-W-algebras}
and Theorem
\ref{Th:Kac-Wakimoto}.
\begin{Th}\label{Th:exceptionals-are-C2}
All the 
exceptional $W$-algebras are
 $C_2$-cofinite. 
\end{Th}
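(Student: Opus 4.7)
The plan is to show that the result is essentially a formal consequence of the two theorems just proven, namely Theorem \ref{Th:C_2-cofiniteness-of-modules-over-W-algebras}(ii) and Theorem \ref{Th:Kac-Wakimoto}, together with an easy verification that the relevant level is degenerate.

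First I would unpack the definitions. Let $\W_k(\fing,f)$ be a non-principal exceptional $W$-algebra. By definition there exists $q\in \N$ with $(q,r\che)=1$ such that $k\in \A[q]$ and the pair $(q,f)$ is exceptional, while $f$ is not a principal nilpotent element. Applying Theorem \ref{Th:Kac-Wakimoto} to the exceptional pair $(q,f)$ gives $f\in \mb{O}_q$, and since $k\in \A[q]$ with $(q,r\che)=1$ the definition \eqref{eq:O[k]} yields $\mb{O}[k]=\mb{O}_q$, hence $f\in \mb{O}[k]$.

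Next I would verify that $k$ is a degenerate admissible number. By Theorem \ref{Th:Main-orbit1}(i), $\mb{O}_q=\mb{O}_{\prin}$ if and only if $q\geq h_{\fing}$. Since $f$ is by hypothesis not principal but lies in $\mb{O}_q$, we must have $\mb{O}_q\ne \mb{O}_{\prin}$, and therefore $q<h_{\fing}$. By the definition of (non-)degenerate admissible numbers in \S 4.3, this forces $k$ to be degenerate.

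Having established that $k$ is degenerate and that $f\in \mb{O}[k]$, Theorem \ref{Th:C_2-cofiniteness-of-modules-over-W-algebras}(ii) directly yields that the simple $W$-algebra $\W_k(\fing,f)$ is $C_2$-cofinite, completing the proof. There is no real obstacle at this stage: all the hard work has been done previously, with the main inputs being the $C_2$-cofiniteness criterion for modules over $W$-algebras (Theorem \ref{Th:C2-cofinite-if-and-only-if}), the explicit determination of associated varieties of degenerate admissible representations (Theorems \ref{Th:main-admissible}, \ref{Th:Main-orbit1}), and the classification-theoretic identification of exceptional pairs with pairs $(q,f)$ such that $f\in \mb{O}_q$ is of principal type (Theorem \ref{Th:Kac-Wakimoto}). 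The proof is thus a short bookkeeping argument combining these ingredients.
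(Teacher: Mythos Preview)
Your proposal is correct and follows exactly the same approach as the paper, which simply states that the result follows from Theorems \ref{Th:C_2-cofiniteness-of-modules-over-W-algebras} and \ref{Th:Kac-Wakimoto}. Your write-up is in fact more careful than the paper's one-line proof, since you explicitly verify the degeneracy hypothesis on $k$ (via $\mb{O}_q\ne\mb{O}_{\prin}\Rightarrow q<h_{\fing}$) that Theorem \ref{Th:C_2-cofiniteness-of-modules-over-W-algebras} requires but the paper leaves implicit.
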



\begin{thebibliography}{oGVAG}

\bibitem[AM]{AraMal}
Tomoyuki Arakawa and Fyodor Malikov.
\newblock A chiral {B}orel-{W}eil-{B}ott theorem.
\newblock {\em Adv. Math.}, 229(5):2908--2949, 2012.

\bibitem[A1]{Ara04}
Tomoyuki Arakawa.
\newblock Vanishing of cohomology associated to quantized {D}rinfeld-{S}okolov
  reduction.
\newblock {\em Int. Math. Res. Not.}, (15):730--767, 2004.

\bibitem[A2]{Ara05}
Tomoyuki Arakawa.
\newblock Representation theory of superconformal algebras and the
  {K}ac-{R}oan-{W}akimoto conjecture.
\newblock {\em Duke Math. J.}, 130(3):435--478, 2005.

\bibitem[A3]{Ara07}
Tomoyuki Arakawa.
\newblock Representation theory of {$W$}-algebras.
\newblock {\em Invent. Math.}, 169(2):219--320, 2007.

\bibitem[A4]{Ara08-a}
Tomoyuki Arakawa.
\newblock Representation theory of {$W$}-algebras, {II}.
\newblock In {\em Exploring new structures and natural constructions in
  mathematical physics}, volume~61 of {\em Adv. Stud. Pure Math.}, pages
  51--90. Math. Soc. Japan, Tokyo, 2011.

\bibitem[A5]{Ara12}
Tomoyuki Arakawa.
\newblock A remark on the {$C_2$} cofiniteness condition on vertex algebras.
\newblock {\em Math. Z.}, 270(1-2):559--575, 2012.

\bibitem[A6]{A-BGG}
T.~Arakawa.
\newblock Two-sided {BGG} resolutions of admissible representations.
\newblock {\em Represent. Theory}, 19(3):183--222, 2014.

\bibitem[BFM]{BeiFeiMaz}
A.~Beilinson, B.~Feigin, and B~Mazur.
\newblock Introduction to algebraic field theory on curves.
\newblock {\em preprint}.

\bibitem[CE]{CarEil56}
Henri Cartan and Samuel Eilenberg.
\newblock {\em Homological algebra}.
\newblock Princeton University Press, Princeton, N. J., 1956.

\bibitem[CM]{ColMcG93}
David~H. Collingwood and William~M. McGovern.
\newblock {\em Nilpotent orbits in semisimple {L}ie algebras}.
\newblock Van Nostrand Reinhold Mathematics Series. Van Nostrand Reinhold Co.,
  New York, 1993.

\bibitem[DLM]{DonLiMas97}
Chongying Dong, Haisheng Li, and Geoffrey Mason.
\newblock Vertex operator algebras associated to admissible representations of
  {$\widehat{\rm sl}\sb 2$}.
\newblock {\em Comm. Math. Phys.}, 184(1):65--93, 1997.

\bibitem[DSK]{De-Kac06}
Alberto De~Sole and Victor~G. Kac.
\newblock Finite vs affine {$W$}-algebras.
\newblock {\em Japan. J. Math.}, 1(1):137--261, 2006.

\bibitem[Eis]{Eis95}
David Eisenbud.
\newblock {\em Commutative algebra}, volume 150 of {\em Graduate Texts in
  Mathematics}.
\newblock Springer-Verlag, New York, 1995.
\newblock With a view toward algebraic geometry.

\bibitem[EK]{ElaKac05}
A.~G. Elashvili and V.~G. Kac.
\newblock Classification of good gradings of simple {L}ie algebras.
\newblock In {\em Lie groups and invariant theory}, volume 213 of {\em Amer.
  Math. Soc. Transl. Ser. 2}, pages 85--104. Amer. Math. Soc., Providence, RI,
  2005.

\bibitem[EKV]{ElaKacVin08}
A.~G. Elashvili, V.~G. Kac, and E.~B. Vinberg.
\newblock On exceptional nilpotents in semisimple {L}ie algebras.
\newblock {\em J. Lie Theory}, 19(2):371--390, 2009.

\bibitem[EM]{EinMus}
Lawrence Ein and Mircea Musta{\c{t}}{\u{a}}.
\newblock Jet schemes and singularities.
\newblock In {\em Algebraic geometry---{S}eattle 2005. {P}art 2}, volume~80 of
  {\em Proc. Sympos. Pure Math.}, pages 505--546. Amer. Math. Soc., Providence,
  RI, 2009.

\bibitem[FBZ]{FreBen04}
Edward Frenkel and David Ben-Zvi.
\newblock {\em Vertex algebras and algebraic curves}, volume~88 of {\em
  Mathematical Surveys and Monographs}.
\newblock American Mathematical Society, Providence, RI, second edition, 2004.

\bibitem[FF]{FF90}
Boris Feigin and Edward Frenkel.
\newblock Quantization of the {D}rinfel\cprime d-{S}okolov reduction.
\newblock {\em Phys. Lett. B}, 246(1-2):75--81, 1990.

\bibitem[FG]{FreGai07}
Edward Frenkel and Dennis Gaitsgory.
\newblock Weyl modules and opers without monodromy.
\newblock In {\em Arithmetic and geometry around quantization}, volume 279 of
  {\em Progr. Math.}, pages 101--121. Birkh\"auser Boston Inc., Boston, MA,
  2010.

\bibitem[FKW]{FKW92}
Edward Frenkel, Victor Kac, and Minoru Wakimoto.
\newblock Characters and fusion rules for {$W$}-algebras via quantized
  {D}rinfel\cprime d-{S}okolov reduction.
\newblock {\em Comm. Math. Phys.}, 147(2):295--328, 1992.

\bibitem[FeiM]{FeiMal97}
Boris Feigin and Fyodor Malikov.
\newblock Modular functor and representation theory of {$\widehat{\rm sl}_2$}
  at a rational level.
\newblock In {\em Operads: Proceedings of Renaissance Conferences (Hartford,
  CT/Luminy, 1995)}, volume 202 of {\em Contemp. Math.}, pages 357--405,
  Providence, RI, 1997. Amer. Math. Soc.

\bibitem[FreM]{FreMal97}
Igor Frenkel and Fyodor Malikov.
\newblock {K}azhdan-{L}usztig tensoring and {H}arish-{C}handra categories.
\newblock {\em preprint}, 1997.
\newblock arXiv:q-alg/9703010.

\bibitem[Fre]{Fre07}
Edward Frenkel.
\newblock {\em Langlands correspondence for loop groups}, volume 103 of {\em
  Cambridge Studies in Advanced Mathematics}.
\newblock Cambridge University Press, Cambridge, 2007.

\bibitem[GG]{GanGin02}
Wee~Liang Gan and Victor Ginzburg.
\newblock Quantization of {S}lodowy slices.
\newblock {\em Int. Math. Res. Not.}, (5):243--255, 2002.

\bibitem[Gin]{Gin08}
Victor Ginzburg.
\newblock Harish-{C}handra bimodules for quantized {S}lodowy slices.
\newblock {\em Represent. Theory}, 13:236--271, 2009.

\bibitem[HT]{HosTsu91}
Shinobu Hosono and Akihiro Tsuchiya.
\newblock Lie algebra cohomology and {$N=2$} {SCFT} based on the {GKO}
  construction.
\newblock {\em Comm. Math. Phys.}, 136(3):451--486, 1991.

\bibitem[Kac]{Kac98}
Victor Kac.
\newblock {\em Vertex algebras for beginners}, volume~10 of {\em University
  Lecture Series}.
\newblock American Mathematical Society, Providence, RI, second edition, 1998.

\bibitem[Kos]{Kos78}
Bertram Kostant.
\newblock On {W}hittaker vectors and representation theory.
\newblock {\em Invent. Math.}, 48(2):101--184, 1978.

\bibitem[KRW]{KacRoaWak03}
Victor Kac, Shi-Shyr Roan, and Minoru Wakimoto.
\newblock Quantum reduction for affine superalgebras.
\newblock {\em Comm. Math. Phys.}, 241(2-3):307--342, 2003.

\bibitem[KS]{KosSte87}
Bertram Kostant and Shlomo Sternberg.
\newblock Symplectic reduction, {BRS} cohomology, and infinite-dimensional
  {C}lifford algebras.
\newblock {\em Ann. Physics}, 176(1):49--113, 1987.

\bibitem[KT]{KasTan98}
Masaki Kashiwara and Toshiyuki Tanisaki.
\newblock Kazhdan-{L}usztig conjecture for symmetrizable {K}ac-{M}oody {L}ie
  algebras. {III}. {P}ositive rational case.
\newblock {\em Asian J. Math.}, 2(4):779--832, 1998.
\newblock Mikio Sato: a great Japanese mathematician of the twentieth century.

\bibitem[KW1]{KacWak88}
Victor~G. Kac and Minoru Wakimoto.
\newblock Modular invariant representations of infinite-dimensional {L}ie
  algebras and superalgebras.
\newblock {\em Proc. Nat. Acad. Sci. U.S.A.}, 85(14):4956--4960, 1988.

\bibitem[KW2]{KacWak89}
V.~G. Kac and M.~Wakimoto.
\newblock Classification of modular invariant representations of affine
  algebras.
\newblock In {\em Infinite-dimensional Lie algebras and groups
  (Luminy-Marseille, 1988)}, volume~7 of {\em Adv. Ser. Math. Phys.}, pages
  138--177. World Sci. Publ., Teaneck, NJ, 1989.

\bibitem[KW3]{KacWak04}
Victor~G. Kac and Minoru Wakimoto.
\newblock Quantum reduction and representation theory of superconformal
  algebras.
\newblock {\em Adv. Math.}, 185(2):400--458, 2004.

\bibitem[KW4]{KacWak08}
Victor~G. Kac and Minoru Wakimoto.
\newblock On rationality of {$W$}-algebras.
\newblock {\em Transform. Groups}, 13(3-4):671--713, 2008.

\bibitem[Li]{Li05}
Haisheng Li.
\newblock Abelianizing vertex algebras.
\newblock {\em Comm. Math. Phys.}, 259(2):391--411, 2005.

\bibitem[Los]{Los07}
Ivan Losev.
\newblock Quantized symplectic actions and {$W$}-algebras.
\newblock {\em J. Amer. Math. Soc.}, 23(1):35--59, 2010.

\bibitem[Lyn]{Lyn79}
T.~E. Lynch.
\newblock {\em Generalized Whittaker vectors and representation theory}.
\newblock PhD thesis, M.I.T., 1979.

\bibitem[Mat]{Mat87}
Hisayosi Matumoto.
\newblock Whittaker vectors and associated varieties.
\newblock {\em Invent. Math.}, 89(1):219--224, 1987.

\bibitem[MF]{MalFre99}
F.~G. Malikov and I.~B. Frenkel{\cprime}.
\newblock Annihilating ideals and tilting functors.
\newblock {\em Funktsional. Anal. i Prilozhen.}, 33(2):31--42, 95, 1999.

\bibitem[MFF]{MalFeuFuk86}
F.~G. Malikov, B.~L. Fe{\u\i}gin, and D.~B. Fuks.
\newblock Singular vectors in {V}erma modules over {K}ac-{M}oody algebras.
\newblock {\em Funktsional. Anal. i Prilozhen.}, 20(2):25--37, 96, 1986.

\bibitem[MN]{MatNag99}
Atsushi Matsuo and Kiyokazu Nagatomo.
\newblock {\em Axioms for a vertex algebra and the locality of quantum fields},
  volume~4 of {\em MSJ Memoirs}.
\newblock Mathematical Society of Japan, Tokyo, 1999.

\bibitem[MP]{MooPia95}
Robert~V. Moody and Arturo Pianzola.
\newblock {\em Lie algebras with triangular decompositions}.
\newblock Canadian Mathematical Society Series of Monographs and Advanced
  Texts. John Wiley \& Sons Inc., New York, 1995.
\newblock A Wiley-Interscience Publication.

\bibitem[oGVAG]{Geo04}
University of~Georgia VIGRE Algebra~Group.
\newblock Varieties of nilpotent elements for simple {L}ie algebras. {I}.
  {G}ood primes.
\newblock {\em J. Algebra}, 280(2):719--737, 2004.
\newblock The University of Georgia VIGRE Algebra Group: David J. Benson, Phil
  Bergonio, Brian D. Boe, Leonard Chastkofsky, Bobbe Cooper, G. Michael Guy, Jo
  Jang Hyun, Jerome Jungster, Graham Matthews, Nadia Mazza, Daniel K. Nakano
  and Kenyon J. Platt.

\bibitem[Pan]{Pan99}
Dmitri~I. Panyushev.
\newblock On spherical nilpotent orbits and beyond.
\newblock {\em Ann. Inst. Fourier (Grenoble)}, 49(5):1453--1476, 1999.

\bibitem[Pre]{Pre07}
Alexander Premet.
\newblock Enveloping algebras of {S}lodowy slices and the {J}oseph ideal.
\newblock {\em J. Eur. Math. Soc.}, 9(3):487--543, 2007.

\bibitem[Zhu]{Zhu96}
Yongchang Zhu.
\newblock Modular invariance of characters of vertex operator algebras.
\newblock {\em J. Amer. Math. Soc.}, 9(1):237--302, 1996.

\end{thebibliography}
\end{document}